\newcommand*{\bbR}{\mathbb{R}}
\newcommand*{\bsa}{\boldsymbol{a}}\newcommand*{\bsb}{\boldsymbol{b}}
\newcommand*{\bst}{\boldsymbol{t}}
\newcommand*{\bsx}{\boldsymbol{x}}\newcommand*{\bsy}{\boldsymbol{y}}
\newcommand*{\rd}{\mathrm{d}}
\newcommand*{\R}{\bbR}
\newcommand{\NN}{\mathbb{N}}
\newcommand{\RR}{\mathbb{R}}
\newcommand{\ZZ}{\mathbb{Z}}
\definecolor{darkred}{RGB}{220,20,60} 
\definecolor{darkblue}{RGB}{0,60,180} 
\definecolor{ikb}{rgb}{0.0, 0.18, 0.65}
\definecolor{darkgreen}{RGB}{0,130,70}
\definecolor{darkorange}{RGB}{180,60,0}
  \par\addvspace{.25in}%
    \footnotesize\MakeUppercase{References}
\newcommand{\rev}{}
\crefname{hypothesis}{Hypothesis}{Hypotheses}
\crefname{fact}{Fact}{Facts}
\title{Optimality of quasi-Monte Carlo methods and \\suboptimality of the sparse-grid Gauss--Hermite rule \\in Gaussian Sobolev spaces\thanks{Submitted to the editors DATE.
\funding{The work of Y.~S. is supported by the Research Council of Finland (decisions 348503, 359181). The work of T.~G.\ is supported by JSPS KAKENHI Grant Number 23K03210.}}}
\author{Yoshihito Kazashi\thanks{Department of Mathematics, University of Manchester, Oxford Road, Manchester M13 9PL, UK 
  (\email{y.kazashi@manchester.ac.uk}).}
\and Yuya Suzuki\thanks{Department of Mathematics and Systems Analysis, School of Science, Aalto University, Espoo, FI-00076 Aalto, Finland
  (\email{yuya.suzuki@aalto.fi}).}
\and Takashi Goda\thanks{Graduate School of Engineering, The University of Tokyo, 7-3-1 Hongo, Bunkyo-ku, Tokyo 113-8656, Japan
  (\email{goda@frcer.t.u-tokyo.ac.jp}).}}
\begin{document}

\maketitle

\begin{abstract}
Optimality of several quasi-Monte Carlo methods and 
suboptimality of the sparse-grid quadrature based on the univariate Gauss--Hermite rule is proved in the Sobolev spaces of mixed dominating smoothness of order $\alpha$, where the optimality is in the sense of worst-case convergence rate. 
For sparse-grid Gauss--Hermite quadrature, lower and upper bounds are established, with rates coinciding up to a logarithmic factor. 
The dominant rate is found to be only $N^{-\alpha/2}$ with $N$ function evaluations, although the optimal rate is known to be $N^{-\alpha}(\ln N)^{(d-1)/2}$. 
The lower bound is obtained by exploiting the structure of the Gauss--Hermite nodes and is independent of the quadrature weights; consequently, no modification of the weights can improve the rate $N^{-\alpha/2}$. 
In contrast, several quasi-Monte Carlo methods with a change of variables are shown to achieve the optimal rate, some up to, and one including, the logarithmic factor.
\end{abstract}

\begin{keywords}
high-dimensional integration, quasi-Monte Carlo, digital net, rank-1 lattice rule, M\"{o}bius transformation, Smolyak algorithm, Gauss--Hermite quadrature, optimal algorithm
\end{keywords}

\begin{MSCcodes}
65C05, 65D30, 65D32, 65D40
\end{MSCcodes}

\section{Introduction}

This paper is concerned with numerical integration of multivariate
functions over the whole space in high dimensions. Our focus is on
 integration with respect to the standard Gaussian measure:
\[
I(f):=\int_{\mathbb{R}^{d}}f(\boldsymbol{x})\frac{\mathrm{e}^{-|\boldsymbol{x}|_{\rev{2}}^{2}/2}}{(2\pi)^{d/2}}\mathrm{d}\boldsymbol{x}\approx Q_{N}(f),
\]
where $|\boldsymbol{x}|_\rev{2}=\sqrt{x_{1}^{2}+\dotsb+x_{d}^{2}}$ for $\boldsymbol{x}\in\mathbb{R}^{d}$
is the Euclidean norm, and $Q_{N}$ is a suitable numerical integration
rule using $N$ evaluations of $f$. 
Such problems arise across a
broad range of applications, including data assimilation \cite{Ito.K_Xiong_2000_GaussianFiltersNonlinear},
robotics \cite{Barfoot.T.D_2024_StateEstimationRobotics}, life insurance
\cite{Gerstner.T_eatl_2008_NumericalSimulationAssetLiability}, 
physics \cite{Lubich.C_2008_book_QuantumToClassical}.
They also serve as simpler problems to which more complex integration tasks can be reduced \cite{Ernst.O.G_etal_2025_LearningIntegrate}.

In high dimensions $d\gg1$, two popular classes of numerical integration
rules $Q_{N}$ are sparse-grid quadrature \cite{Bungartz.H_Griebel_2004_sparse_grid_acta}
and quasi-Monte Carlo (QMC) methods \cite{Dick.J_Kuo_Sloan_2013_ActaNumerica}.
Sparse-grid quadrature is a numerical integration technique based
on univariate quadrature rules. 
Unlike fully tensorized quadrature,
it significantly reduces the number of evaluation points, making it
attractive for high-dimensional problems. A commonly used univariate
quadrature for approximating $I(f)$ is the Gauss--Hermite rule \cite{Jia.B_etal_2011_SparseGaussHermiteQuadrature,Radhakrishnan_etal_2016_MultipleSparsegridGauss,Emzir_etal_2023_MultidimensionalProjectionFilters,Bergold.P_Lasser_2024_GaussianWavePacket,YangLi_2025_SparseGridInterpolation}, 
which is the focus of this paper. 
Quasi-Monte Carlo, on the other
hand, is an equal-weight numerical integration. Its core idea is to
carefully choose the quadrature points to improve accuracy. Examples
of point sets include low-discrepancy point sets and sequences, lattice
rules, and digital nets; these classes are not mutually exclusive.
These methods are also widely used for integration with respect to
the Gaussian measure and have found applications across a diverse
range of scientific fields, such as uncertainty quantification \cite{Crevillen_Power_2017,Kubo.K_etal_2022_QuasiMonteCarloSampling}, computational
physics \cite{Jansen.K_etal_2014_QuasiMonteCarloMethods}, finance \cite{Glasserman.P_2003_Book_MCinFinancialEngineering,Lecuyer_2009}, and machine learning \cite{Avron.H_Sindhwani_Yang_Mahoney_2016,MishraEtAl.S_2021_EnhancingAccuracyDeep}.

In this paper, we show that sparse-grid Gauss--Hermite quadrature
is suboptimal, whereas several quasi-Monte Carlo methods are optimal
in terms of convergence rate. More precisely, for $L^{2}$-Sobolev
spaces of degree
$\alpha$, the sparse-grid Gauss--Hermite
rule achieves the convergence rate $O(N^{-\alpha/2})$; several quasi-Monte
Carlo (QMC) methods together with change of variables achieve $O(N^{-\alpha})$
up to a logarithmic factor. 
The rate $O(N^{-\alpha})$ is in fact
optimal, as there exists a matching lower bound established by Dick
et al.~\cite{Dick.J_Irrgeher_Leobacher_Pillichshammer_2018_SINUM_Hermite}.
In this sense, QMC methods considered herein are rate-optimal. In
contrast, we also show that the rate $O(N^{-\alpha/2})$ for the sparse-grid
Gauss--Hermite rule likewise unimprovable, by proving a corresponding
lower bound.

Such a gap in convergence rates between the sparse-grid Gauss--Hermite
rule and QMC methods has also been observed numerically by Dick et
al.~\cite{Dick.J_Irrgeher_Leobacher_Pillichshammer_2018_SINUM_Hermite}
and in subsequent work by Nuyens and one of the present authors \cite{Nuyens.D_Suzuki_2022_ScaledLatticeRd}.
Our theoretical results provide a rigorous explanation of these empirical
findings. 

Analysis of the sparse-grid Gauss--Hermite rule and QMC methods on the whole space has been carried out before, with many papers set in the context of partial differential equations with random coefficients
\cite{
	Chen.P_2018_SparseQuadratureHighdimensionala,
	Dung.D_2021_sparsegrids_lognormal,
	Dung.D_etal_2023_AnalyticitySparsityUncertainty,
	ErnstEtAl.OG_2018_ConvergenceSparseCollocationa,
	Graham.I_etal_2015_Numerische,
	HarbrechtEtAl.H_2016_QuasiMonteCarloMethod,
	Herrmann.L_Schwab_2019_local_lognormal,
	Kazashi.Y_2019_product,
	Nichols.J_Kuo_2014_POD,
	Robbe.P_etal_2017_MultiIndexQuasiMonteCarlo}. 
This paper provides theoretical results for a standard function space, 
without specific structure of the underlying model problem such as the form
of the random field or partial differential equations whose differential operator is given by such a field. In the same vein, aforementioned work by Dick
et al.~\cite{Dick.J_Irrgeher_Leobacher_Pillichshammer_2018_SINUM_Hermite} studied quasi-Monte Carlo methods in a Sobolev space, the space we also consider herein. 
They showed that the QMC method considered there attains the optimal convergence rate by proving matching upper and lower bounds, although the logarithmic factor was not optimal. 
Nuyens and one of the present
authors considered a rank-$1$ lattice QMC over $\mathbb{R}^{d}$
\cite{Nuyens.D_Suzuki_2022_ScaledLatticeRd} for functions with suitable decay.  
We revisit their results and show that the resulting 
QMC method achieves the optimal rate in the Sobolev space considered
herein, although the logarithmic factor is again not optimal. 
In \cite{Suzuki.Y_etal_2025_MobiustransformedTrapezoidalRule}, Hyvönen, Karvonen, and one of the present authors considered a change of variables that maps functions on the real line to the unit interval. 
Although such an approach typically incurs undesirable unboundedness near the boundary, they showed that, with a carefully chosen M\"obius transformation, the resulting mapping sends univariate functions from the Gaussian Sobolev space on the real line to the periodic Sobolev space of the same smoothness on the unit interval. 
In this paper, by applying such a transformation component-wise to $d$-variate functions, we show that the resulting function belongs to two classes of Sobolev spaces that are important in QMC: Korobov spaces and unanchored Sobolev spaces. 
Using this, we establish that QMC rules with the Möbius transformation inherit the convergence rates of the respective spaces. 
These rates turn out to be optimal in Gaussian Sobolev spaces: rank-$1$ lattice rules achieve the optimal rate up to a logarithmic factor, whereas higher-order digital nets achieve the optimal rate including the logarithmic factor. 

Dũng and Nguyen \cite{Dung.D_Nguyen_2023_OptimalNumericalIntegration}
proposed two numerical integration methods that are optimal -- including
the logarithmic factor -- in $L^{p}$-Sobolev spaces for $1<p<\infty$.
One method requires implementing a partition of unity with suitable
properties. The other method appears to be implementable, but the
quadrature weights depend on various indices. The results in this
paper show that equal-weight quadrature rules suffice to obtain an
optimal numerical integration rule.

The remainder of the paper is organized as follows. In \cref{sec:Preliminary} we introduce necessary concepts and state preliminary results. \Cref{sec:GH-SG,sec:QMC} present our main results on the sparse-grid Gauss--Hermite quadrature and QMC methods, respectively. These results build on the authors' previous works \cite{Goda.T_Suzuki_Yoshiki_2018_OptimalOrderQuadrature,Kazashi.Y_Suzuki_Goda_2023_SuboptimalityGaussHermite,Suzuki.Y_etal_2025_MobiustransformedTrapezoidalRule}.
Finally, \cref{sec:conclusion} concludes the paper.

\section{Preliminary\label{sec:Preliminary}}

\subsection{The Sobolev space}

We consider the integration problem in the Gaussian Sobolev space,
also known as the Hermite space of finite smoothness. This space is
defined by 
\[
\|f\|_{H_{\rho}^{\alpha}(\mathbb{R}^{d})}:=\|f\|_{H_{\rho}^{\alpha}}:=\left(\sum_{|\mathbf{r}|_{\infty}\leq\alpha}\|D^{\mathbf{r}}f\|_{L_{\rho}^{2}(\mathbb{R}^{d})}^{2}\right)^{\frac{1}{2}},
\]
where $\rho(\bsx)=\frac{\mathrm{e}^{-|\boldsymbol{x}|_{\rev{2}}^{2}/2}}{(2\pi)^{d/2}}$
and $D^{\mathbf{r}}f$ is the weak derivative of $f$. Here, $\|\cdot\|_{L_{\rho}^{2}(\mathbb{R}^{d})}$
is the norm induced by the weighted $L^{2}$-inner product $\langle f,g\rangle_{L_{\rho}^{2}}:=\int_{\RR^{d}}f(\bsx)g(\bsx)\rho(\bsx)\rd\bsx$
for $f,g\in L_{\rho}^{2}(\R^{d})$.
\rev{Throughout this paper, we denote by  $L_{\mathrm{loc}}^{1}(\mathbb{R}^{d})$ the space of locally integrable functions on  $\mathbb{R}^d$.}

It is known that this space and the so-called Hermite space\rev{ introduced in \cite{IrrgeherEtAl.C_2015_HighdimensionalIntegration$mathbbR^d$} and generalized in \cite{Dick.J_Irrgeher_Leobacher_Pillichshammer_2018_SINUM_Hermite}} coincide
as vector spaces, with equivalent norms. More specifically, Dick et
al.~\cite{Dick.J_Irrgeher_Leobacher_Pillichshammer_2018_SINUM_Hermite}
studied quasi-Monte Carlo methods in the Hermite space and showed
that every element of this space also belongs to $H_{\rho}^{\alpha}(\mathbb{R}^{d})$,
with the embedding being continuous; the reverse inclusion is discussed
in \cite{Kazashi.Y_Suzuki_Goda_2023_SuboptimalityGaussHermite,Dung.D_Nguyen_2023_OptimalNumericalIntegration}.
This space is also known to coincide with the so-called modulation
space, as discussed in \cite{Ehler.M_Groechenig_2023_AbstractApproach}. 
Further properties of Hermite spaces are discussed in~\cite{Gnewuch.M_etal_2022_CountableTensorProducts,Leobacher.G_etal_2023_TractabilityL2approximationIntegration,GnewuchEtAl.M_2024_InfinitedimensionalIntegrationL2L2approximation}.

In the space $H_{\rho}^{\alpha}(\mathbb{R}^{d})$, a lower bound on
the worst-case integration error is known for general algorithms.

\begin{theorem}[\cite{Dick.J_Irrgeher_Leobacher_Pillichshammer_2018_SINUM_Hermite}]\label{thm:lower-all-algorithm}
For $N\geq2$, let $A_{N}\colon H_{\rho}^{\alpha}(\mathbb{R}^{d})\to\mathbb{R}$
be a mapping (linear or nonlinear) that uses only $N$ values as information
about the argument, i.e., it is of the form $A_{N}(f)=\mathcal{I}_{N}(f(\boldsymbol{x}_{1}),\dots,f(\boldsymbol{x}_{N}))$
for a mapping $\mathcal{I}_{N}\colon\mathbb{R}^{N}\to\mathbb{R}$.
Then we have 
\[
	\sup_{ 
		\substack{
		f\in H_{\rho}^{\alpha}(\mathbb{R}^{d})\\
			\|f\|_{H_{\rho}^{\alpha}(\mathbb{R}^{d})}\leq 1
		}	
	}|I(f)-A_{N}(f)|
\geq c_{\alpha,d}\frac{(\ln N)^{\frac{d-1}{2}}}{N^{\alpha}}.
\]
\end{theorem}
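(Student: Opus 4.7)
My plan is to follow the classical fooling-function strategy from information-based complexity, combined with a Temlyakov/Smolyak-type sparse-grid bump construction tailored to mixed dominating smoothness.

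The first step is symmetry. For any nodes $\bsx_{1},\dots,\bsx_{N}$, it suffices to exhibit $g\in H_{\rho}^{\alpha}(\bbR^{d})$ with $\|g\|_{H_{\rho}^{\alpha}}\leq 1$, $g(\bsx_{i})=0$ for all $i$, and $|I(g)|\gtrsim(\ln N)^{(d-1)/2}/N^{\alpha}$: since $A_{N}$ sees the function only through its values at the nodes, $A_{N}(g)=A_{N}(-g)$, whence
\[
|I(g)-A_{N}(g)|+|I(-g)-A_{N}(-g)|\geq|I(g)-I(-g)|=2|I(g)|,
\]
so the worst-case error is at least $|I(g)|$. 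Adaptive choices of nodes reduce to this non-adaptive case by a standard linearization argument.

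To construct $g$, I would localize inside a fixed compact cube $[-1,1]^{d}$, on which $\rho$ is bounded above and below by positive constants, so that $\|\cdot\|_{H_{\rho}^{\alpha}}$ is equivalent to the unweighted mixed Sobolev norm on functions supported there. Pick $n$ with $2^{n}\asymp N$ and the dyadic shape-index set $\Lambda_{n}=\{\bsk\in\bbN_{0}^{d}:|\bsk|_{1}=n\}$. For each $\bsk\in\Lambda_{n}$, partition $[-1,1]^{d}$ into $2^{n}$ boxes of side lengths $\asymp 2^{-k_{j}}$ in coordinate $j$, and place a rescaled copy $\phi_{\bsk,\bsj}$ of a fixed smooth bump $\psi$ on each box. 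A direct chain-rule computation gives $\|\phi_{\bsk,\bsj}\|_{H^{\alpha}}^{2}\asymp 2^{n(2\alpha-1)}$ and $I(\phi_{\bsk,\bsj})\asymp 2^{-n}$, \emph{uniformly in} $\bsk\in\Lambda_{n}$; this uniformity---due to the dominating derivative $\mathbf{r}=(\alpha,\dots,\alpha)$ producing $\prod_{j}2^{k_{j}(2\alpha-1)}=2^{n(2\alpha-1)}$ regardless of how mass is distributed among the $k_{j}$---is precisely the signature of mixed dominating smoothness. With $2^{n}\geq 2N$, each level contains at least $2^{n}/2$ node-free boxes; picking such a set $J_{\bsk}$ of size $\asymp 2^{n}$ and setting $\tilde g:=\sum_{\bsk\in\Lambda_{n}}\sum_{\bsj\in J_{\bsk}}\phi_{\bsk,\bsj}$, bumps within a fixed level have disjoint supports, so summing the per-box estimates gives $\asymp N^{2\alpha}$ for the squared norm and $\asymp 1$ for the integral of each single-level slice. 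Assuming quasi-orthogonality across levels, $\|\tilde g\|_{H^{\alpha}}^{2}\asymp\#\Lambda_{n}\cdot N^{2\alpha}\asymp n^{d-1}N^{2\alpha}$ and $I(\tilde g)\asymp n^{d-1}$; with $g:=\tilde g/\|\tilde g\|_{H^{\alpha}}$ this yields $|I(g)|\asymp n^{(d-1)/2}/N^{\alpha}\asymp(\ln N)^{(d-1)/2}/N^{\alpha}$, as required.

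The main obstacle is establishing the quasi-orthogonality across levels $\bsk\in\Lambda_{n}$ in the mixed Sobolev norm: with arbitrary smooth bumps the cross-level inner products need not vanish, so the squared norm may fail to split cleanly into a sum over $\bsk$. The standard remedies are either to replace the bumps by tensor-product biorthogonal spline wavelets with sufficiently many vanishing moments---yielding a Riesz-basis characterization of the mixed Sobolev norm in which the squared norm of $\tilde g$ really is a sum over levels---or to pass to the Fourier side and argue with hyperbolic-cross Dirichlet kernels, along the lines of Bakhvalov--Temlyakov. Either refinement transfers back to $H_{\rho}^{\alpha}(\bbR^{d})$ through the norm equivalence on $[-1,1]^{d}$ noted above.
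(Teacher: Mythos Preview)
The paper does not actually prove this statement: it is quoted from Dick, Irrgeher, Leobacher, and Pillichshammer (2018), with two remarks---that the result there is stated for the norm-equivalent Hermite space (so the constant may change) and that the extension from linear to nonlinear algorithms follows from Bakhvalov's theorem. Your proposal is thus not a comparison with the paper's proof but a sketch of an independent proof along the lines of Temlyakov's lower bounds for cubature on classes with bounded mixed derivative. The outline is sound: the symmetry reduction, localization to a cube where $\rho\asymp 1$, the dyadic hyperbolic-cross bump family, and the per-level counts are all correct, and you correctly identify that the dominant derivative $\mathbf{r}=(\alpha,\dots,\alpha)$ gives the same scaling $2^{n(2\alpha-1)}$ regardless of how $|\bsk|_{1}=n$ is distributed. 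The one substantive step you leave open---cross-level quasi-orthogonality---is real and is not automatic for arbitrary smooth bumps; your two suggested fixes (tensor wavelets with enough vanishing moments giving a Riesz-basis norm characterization, or a Fourier/hyperbolic-cross argument) are indeed the standard ways to close it. By contrast, the original proof in the cited reference works on the Hermite side, where the norm has a diagonal sequence-space form in Hermite coefficients and the orthogonality is built in; that route avoids the spatial quasi-orthogonality issue altogether at the cost of passing through the norm equivalence. Your approach is more self-contained for the Sobolev norm but requires the wavelet or Fourier machinery you mention to be made rigorous.
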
 Hence, any algorithm achieving this convergence rate
$\frac{(\ln N)^{\frac{d-1}{2}}}{N^{\alpha}}$ may be regarded as
\emph{optimal}, in terms of the worst-case error convergence rate.
We will show that several QMC methods attain a rate of the form $\frac{(\ln N)^{\gamma}}{N^{\alpha}}$,
where the exponent $\gamma$ depends on the method. The \rev{exponent} $\gamma=\frac{d-1}{2}$
turns out to be attainable, hence optimal in the sense above. In contrast,
the sparse-grid quadrature based on the Gauss--Hermite rule is of
the rate $N^{-\alpha/2}$, up to a logarithmic factor, which is subotimal
in the sense of worst-case error convergence rate.

Note that \cite[Theorem 1]{Dick.J_Irrgeher_Leobacher_Pillichshammer_2018_SINUM_Hermite}
is formulated for linear algorithms. As noted in \cite[Section 3]{Dick.J_Irrgeher_Leobacher_Pillichshammer_2018_SINUM_Hermite},
however, in view of for example \cite{Bakhvalov.N.S_1971_OptimalityLinearMethods}
(see also \cite[Section 4.2]{Novak.E_Wozniakowski_book_1}), their
lower bound also holds for nonlinear algorithms. Moreover, the implied
constant in the result above may differ from that in \cite[Theorem 1]{Dick.J_Irrgeher_Leobacher_Pillichshammer_2018_SINUM_Hermite},
since the present bound is formulated for the Sobolev space, whereas
\cite[Theorem 1]{Dick.J_Irrgeher_Leobacher_Pillichshammer_2018_SINUM_Hermite}
considers a norm-equivalent space, namely the Hermite space. \rev{Further characterizations of Hermite spaces are investigated in \cite{Leobacher.G_etal_2023_TractabilityL2approximationIntegration}.}

Let $H_{k}$ for $k\in\mathbb{N}_{0}:=\mathbb{N}\cup\{0\}$ denote
the normalized probabilistic Hermite polynomial of degree $k$, defined
by 
\[
H_{k}(x)=\frac{(-1)^{k}}{\sqrt{k!}}\mathrm{e}^{x^{2}/2}\frac{\rd^{k}}{\rd x^{k}}\mathrm{e}^{-x^{2}/2}.
\]
The multivariate counterpart is defined by $H_{\mathbf{k}}(\bsx)=\prod_{j=1}^{d}H_{k_{j}}(x_{j})$
for $\mathbf{k}\in\mathbb{N}_{0}^{d}$. The collection $(H_{\mathbf{k}})_{\mathbf{k}\in\mathbb{N}_{0}^{d}}$
forms a complete orthonormal system for $L_{\rho}^{2}(\R^{d})$, with
the normalization above ensuring $\|H_{\mathbf{k}}\|_{L_{\rho}^{2}}=1$
for all $\mathbf{k}\in\mathbb{N}_{0}^{d}$.

Throughout the paper, for mappings $f,g\colon S\to[0,\infty]$ on
any set $S$, we write $f\lesssim g$ if there exists a constant $c\in(0,\infty)$
such that $f(x)\leq cg(x)$ holds for all $x\in S$. We write $f\asymp g$
if both $f\lesssim g$ and $g\lesssim f$. If the constant may depend
on specific parameters, we indicate this by subscripts. For example,
$f\lesssim_{d}g$ means there exists a constant $c_{d}\in(0,\infty)$,
which may depend on $d$, such that $f(x)\leq c_{d}\,g(x)$ for all
$x\in S$; likewise, $f\asymp_{d}g$ means $f\lesssim_{d}g$ and $g\lesssim_{d}f$. 
\rev{We also use the following notations: $\mathbf{a}!:=\prod_{j=1}^{d}a_j!$ and $\binom{\mathbf{a}}{\mathbf{b}}:=\prod_{j=1}^{d}\binom{a_j}{b_j}$ for $\mathbf{a},\mathbf{b}\in \mathbb{N}^d_0$.} 
\rev{Furthermore, for $\mathbf{k}\in\mathbb{N}_{0}^{d}$ we let $|\mathbf{k}|:=k_1+\dotsb+k_d$.}
\subsection{Tensorised operators\label{subsec:tensorised-operators}}

In this section we define the tensorised operator $T_{1}\otimes\dotsb\otimes T_{d}\colon H_{\rho}^{\alpha}(\mathbb{R}^{d})\to H_{\rho}^{\alpha}(\mathbb{R}^{d})$
based on bounded linear operators $T_{j}\colon H_{\rho}^{\alpha}(\mathbb{R})\to H_{\rho}^{\alpha}(\mathbb{R})$,
$j=1,\dots,d$. The main motivation for this subsection is to construct
such an expression on $H_{\rho}^{\alpha}(\mathbb{R}^{d})$ without
using the norm-equivalence with the Hermite space, which will be used
to discuss the Smolyak algorithm. 
\begin{proposition} \label{prop:dense}The
$\mathbb{R}$-vector space spanned by $d$-variate polynomials $\mathcal{P}:=\mathrm{span}\Big\{\prod_{j=1}^{d}H_{k_{j}}\mid k_{j}\in\mathbb{N}_{0},\,j=1,\dots,d\Big\}$
is dense in $H_{\rho}^{\alpha}(\mathbb{R}^{d})$. 
\end{proposition}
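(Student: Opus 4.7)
The plan is to approximate any $f \in H_\rho^\alpha(\mathbb{R}^d)$ by truncations of its $L_\rho^2$-Hermite expansion and show convergence in the stronger norm $\|\cdot\|_{H_\rho^\alpha}$. Since $H_\rho^\alpha(\mathbb{R}^d) \subset L_\rho^2(\mathbb{R}^d)$ and $(H_{\mathbf{k}})_{\mathbf{k}\in \mathbb{N}_0^d}$ is a complete orthonormal system of $L_\rho^2$, each $f \in H_\rho^\alpha(\mathbb{R}^d)$ admits an $L_\rho^2$-convergent expansion $f = \sum_{\mathbf{k}} \hat{f}_{\mathbf{k}} H_{\mathbf{k}}$ with $\hat{f}_{\mathbf{k}} := \langle f, H_{\mathbf{k}}\rangle_{L_\rho^2}$. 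Set $f_N := \sum_{|\mathbf{k}|_\infty \leq N} \hat{f}_{\mathbf{k}} H_{\mathbf{k}} \in \mathcal{P}$; the goal is to prove $\|f - f_N\|_{H_\rho^\alpha} \to 0$.

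The engine of the argument is the identity
\[
    \langle D^{\mathbf{r}} f, H_{\mathbf{k}} \rangle_{L_\rho^2} = \prod_{j=1}^d \sqrt{\tfrac{(k_j+r_j)!}{k_j!}}\,\hat{f}_{\mathbf{k}+\mathbf{r}},\qquad |\mathbf{r}|_\infty \le \alpha,
\]
relating the Hermite coefficients of the weak derivatives of $f$ to those of $f$ itself. The three-term recurrence gives the creation/annihilation relations $\partial_j H_{\mathbf{k}} = \sqrt{k_j}\,H_{\mathbf{k}-\mathbf{e}_j}$ and $(x_j - \partial_j) H_{\mathbf{k}} = \sqrt{k_j+1}\,H_{\mathbf{k}+\mathbf{e}_j}$, where $\mathbf{e}_j$ is the $j$-th standard basis vector. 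Integration by parts against the Gaussian weight $\rho$ shows that $(x_j-\partial_j)$ is the $L_\rho^2$-adjoint of $\partial_j$ on smooth functions with sufficient decay; iterating and testing against $H_{\mathbf{k}}$ yields the displayed identity.

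The main technical step is extending this adjoint relation from classical to weak derivatives, i.e.\ justifying
\[
    \int_{\mathbb{R}^d} D^{\mathbf{r}} f \cdot H_{\mathbf{k}}\,\rho\,\mathrm{d}\mathbf{x} = \int_{\mathbb{R}^d} f \cdot (x-\partial)^{\mathbf{r}} H_{\mathbf{k}}\cdot\rho\,\mathrm{d}\mathbf{x}.
\]
The weak derivative $D^{\mathbf{r}} f$ only satisfies the defining duality against $\phi \in C_c^\infty(\mathbb{R}^d)$, whereas $H_{\mathbf{k}}\rho$ is merely a Schwartz function, not compactly supported. I would handle this by cutoff: pick $\chi_R \in C_c^\infty(\mathbb{R}^d)$ with $\chi_R\equiv 1$ on the ball of radius $R$ and with uniformly bounded derivatives, apply the weak-derivative identity to $\phi_R := \chi_R H_{\mathbf{k}}\rho \in C_c^\infty(\mathbb{R}^d)$, and let $R\to\infty$. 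Cauchy--Schwarz in $L_\rho^2$, combined with the polynomial growth of $H_{\mathbf{k}}$ and its derivatives and the $L_\rho^2$-integrability of $f$ and $D^{\mathbf{r}}f$, ensures convergence of both sides and yields the desired identity.

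Once the identity is in hand, the conclusion is pure Parseval bookkeeping. Orthonormality of $(H_{\mathbf{k}})$ in $L_\rho^2$ together with the identity yields
\[
    \|D^{\mathbf{r}} f\|_{L_\rho^2}^2 = \sum_{\mathbf{k}\ge \mathbf{r}} \prod_{j=1}^d \tfrac{k_j!}{(k_j-r_j)!}\,|\hat{f}_{\mathbf{k}}|^2 < \infty,
\]
and
\[
    \|D^{\mathbf{r}}(f - f_N)\|_{L_\rho^2}^2 = \sum_{\substack{\mathbf{k}\ge \mathbf{r}\\ |\mathbf{k}|_\infty > N}} \prod_{j=1}^d \tfrac{k_j!}{(k_j-r_j)!}\,|\hat{f}_{\mathbf{k}}|^2 \;\longrightarrow\; 0
\]
as $N\to\infty$, being the tail of a convergent series. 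Summing over the finitely many $\mathbf{r}$ with $|\mathbf{r}|_\infty \leq \alpha$ gives $\|f - f_N\|_{H_\rho^\alpha}\to 0$, proving density. The main obstacle is the integration-by-parts extension described above; everything else is orthogonality and tail estimates.
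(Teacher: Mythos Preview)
Your proposal is correct and follows essentially the same route as the paper: prove the identity $\langle D^{\mathbf{r}}f,H_{\mathbf{k}}\rangle_{L_\rho^2}=\bigl(\prod_j (k_j+r_j)!/k_j!\bigr)^{1/2}\hat f_{\mathbf{k}+\mathbf{r}}$, then use Parseval to exhibit $\|f-f_N\|_{H_\rho^\alpha}^2$ as a tail of a convergent series. The paper simply cites this identity from earlier work and uses an $\ell_1$-truncation $f_N=\sum_{|\mathbf{k}|\le N}\hat f_{\mathbf{k}}H_{\mathbf{k}}$ rather than your $\ell_\infty$-truncation, but the argument is the same; your explicit cutoff justification for passing from weak derivatives to the Hermite test functions is the content the paper defers to the cited lemma.
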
 
\begin{proof}
Fix $f\in H_{\rho}^{\alpha}$. Following the argument in \cite[Lemma 2.1]{Kazashi.Y_Suzuki_Goda_2023_SuboptimalityGaussHermite}
we see 
\[
\langle D^{\mathbf{r}}f,H_{\mathbf{k}}\rangle_{L_{\rho}^{2}}=\biggl(\prod_{j=1}^{d}\prod_{m=1}^{\mathrm{r}_{j}}\bigl(k_{j}+m\bigr)\biggr)^{1/2}\langle f,H_{\mathbf{k}+\mathbf{r}}\rangle_{L_{\rho}^{2}}\quad\text{for }\mathbf{k}\in\mathbb{N}_{0}^{d},\ \mathbf{r}\in\{0,\dots,\alpha\}^{d}.
\]
With this, using the Parseval identity for derivatives we see that
the polynomial $f_{N}=\sum_{|\mathbf{k}|\leq N}\langle f,H_{\mathbf{k}}\rangle_{L_{\rho}^{2}}H_{\mathbf{k}}$
can be made arbitrarily close to $f$ in $H_{\rho}^{\alpha}$: 
\[
\|f-f_{N}\|_{H_{\rho}^{\alpha}}^{2}=\sum_{|\mathbf{r}|_{\infty}\leq\alpha}\sum_{|\mathbf{k}|>N}\biggl(\prod_{j=1}^{d}\prod_{m=1}^{\mathrm{r}_{j}}\bigl(k_{j}+m\bigr)\biggr)|\langle f,H_{\mathbf{k}+\mathbf{r}}\rangle_{L_{\rho}^{2}}|^{2}.
\]
\end{proof}

Let $T_{j}\colon H_{\rho}^{\alpha}(\mathbb{R})\to H_{\rho}^{\alpha}(\mathbb{R})$
be bounded linear operators. Define $T_{1}\otimes\dotsb\otimes T_{d}:\mathcal{P}\to H_{\rho}^{\alpha}(\mathbb{R}^{d})$
by defining 
\begin{equation*}
\left[T_{1}\otimes\dotsb\otimes T_{d}\Bigg(\prod_{j=1}^{d}H_{k_{j}}\Bigg)\right](\boldsymbol{x}) =\prod_{j=1}^{d}[T_{j}(H_{k_{\rev{j}}})](x_{j})\qquad\text{for }\prod_{j=1}^{d}H_{k_{j}}\in\mathcal{P}
\end{equation*}
and extending it linearly to $\mathcal{P}$. 
Then, using the argument for tensor product Hilbert spaces \cite[Section VIII.10]{Reed.M_Simon_1980_book}
we see $\sup_{p\in\mathcal{P},\|p\|_{H_{\rho}^{\alpha}(\mathbb{R}^{d})}=1}\|T_{1}\otimes\dotsb\otimes T_{d}(p)\|_{\rev{H_{\rho}^{\alpha}(\mathbb{R}^{d})}}=\prod_{j=1}^{d}\|T_{j}\|_{H_{\rho}^{\alpha}(\mathbb{R})\to H_{\rho}^{\alpha}(\mathbb{R})}$
on $\mathcal{P}$, and from \cref{prop:dense}, the operator
$T_{1}\otimes\dotsb\otimes T_{d}$ extends continuously to $H_{\rho}^{\alpha}(\mathbb{R}^{d})$
with the same operator norm.

\rev{Note that this construction applies also when the $T_{j}$ are
bounded linear functionals. In such a case, since $T_{j}(f)$ is a
constant function for $f\in H_{\rho}^{\alpha}(\mathbb{R})$ and the
total measure is normalised to $1$, we have $\|T_{j}(f)\|_{H_{\rho}^{\alpha}(\mathbb{R})}=|T_{j}(f)|$.
From this, we see that $T_{j}$ is also a bounded operator from $H_{\rho}^{\alpha}(\mathbb{R})$
to itself with $\|T_{j}\|_{H_{\rho}^{\alpha}(\mathbb{R})\to H_{\rho}^{\alpha}(\mathbb{R})}=\|T_{j}\|_{H_{\rho}^{\alpha}(\mathbb{R})\to\mathbb{R}}$.
Similarly, for the functional $T_{1}\otimes\dotsb\otimes T_{d}$
on $\mathcal{P}$ constructed as above, $T_{1}\otimes\dotsb\otimes T_{d}(p)$
is a constant function so we have $|T_{1}\otimes\dotsb\otimes T_{d}(p)|=\|T_{1}\otimes\dotsb\otimes T_{d}(p)\|_{H_{\rho}^{\alpha}(\mathbb{R}^{d})}$.
Hence, $T_{1}\otimes\dotsb\otimes T_{d}$ satisfies $\sup_{p\in\mathcal{P},\|p\|_{H_{\rho}^{\alpha}(\mathbb{R}^{d})}=1}|T_{1}\otimes\dotsb\otimes T_{d}(p)|=\prod_{j=1}^{d}\|T_{j}\|_{H_{\rho}^{\alpha}(\mathbb{R})\to\mathbb{R}}$ 
 and admits a continuous extension to $H_{\rho}^{\alpha}(\mathbb{R}^{d})$ with the same functional norm.
}%
Operators with the symbol $\otimes$ in this paper are constructed
explicitly in this manner.

\subsection{Sobolev embedding for spaces with dominating mixed smoothness}

Let $W_{\mathrm{mix}}^{1,2}(\mathbb{R}^{d})$ denote the Sobolev space
of dominating mixed first-order smoothness, equipped with the norm
\[
\|v\|_{W_{\mathrm{mix}}^{1,2}(\mathbb{R}^{d})}:=\left(\sum_{|\mathbf{r}|_{\infty}\leq1}\|D^{\mathbf{r}}v\|_{L^{2}(\mathbb{R}^{d})}^{2}\right)^{\frac{1}{2}},
\]
where $\|\cdot\|_{L^{2}(\mathbb{R}^{d})}$ is the standard $L^{2}(\mathbb{R}^{d})$-norm. 
Later, we will use the following Sobolev inequality for $W_{\mathrm{mix}}^{1,2}(\mathbb{R}^{d})$,
which is also of independent interest.

\begin{proposition}\label{prop:1st-order-embedding}Every element of
$W_{\mathrm{mix}}^{1,2}(\mathbb{R}^{d})$ admits a bounded continuous
representative $v$ satisfying 
\begin{align}
\sup_{\boldsymbol{x}\in\mathbb{R}^{d}}|v(\boldsymbol{x})| &
 \leq\|v\|_{W_{\mathrm{mix}}^{1,2}(\mathbb{R}^{d})},
\end{align}
and
\begin{align}
|v(\boldsymbol{x})-v(\boldsymbol{y})| & \leq d^{3/4}\|v\|_{W_{\mathrm{mix}}^{1,2}(\mathbb{R}^{d})}|\boldsymbol{x}-\boldsymbol{y}|_{2}^{1/2}\qquad\text{for all }\boldsymbol{x},\boldsymbol{y}\in\mathbb{R}^{d}.\label{eq:Hoelder}
\end{align}
\end{proposition}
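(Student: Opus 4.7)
The plan is to establish both estimates first for smooth compactly supported $v$ by repeatedly applying the one-dimensional Sobolev embedding, and then extend to general $v\in W_{\mathrm{mix}}^{1,2}(\mathbb{R}^{d})$ by density. The single-variable building block is the sharp bound
\[
\sup_{t\in\mathbb{R}}|u(t)|^{2}\leq\|u\|_{L^{2}(\mathbb{R})}^{2}+\|u'\|_{L^{2}(\mathbb{R})}^{2},
\]
which follows from $u(t)^{2}=2\int_{-\infty}^{t}u(s)u'(s)\,\rd s$ (the continuous $W^{1,2}(\mathbb{R})$-representative decays along a sequence) combined with $2ab\leq a^{2}+b^{2}$.

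For the uniform bound, iterate this one-dimensional estimate coordinate by coordinate. First apply it in $x_{1}$ to the slice $t\mapsto v(t,x_{2},\dots,x_{d})$, which bounds $|v(\boldsymbol{x})|^{2}$ by $\int_{\mathbb{R}}(|v|^{2}+|\partial_{1}v|^{2})\,\rd t_{1}$; then apply it in $x_{2}$ to each of these two slices, producing four $L^{2}$-in-$(t_{1},t_{2})$ terms; and so on. After $d$ such applications one obtains
\[
|v(\boldsymbol{x})|^{2}\leq\sum_{\mathbf{r}\in\{0,1\}^{d}}\int_{\mathbb{R}^{d}}|D^{\mathbf{r}}v(\boldsymbol{t})|^{2}\,\rd\boldsymbol{t}=\|v\|_{W_{\mathrm{mix}}^{1,2}(\mathbb{R}^{d})}^{2},
\]
which gives the first inequality with constant $1$.

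For the Hölder estimate, I would telescope along an axis-aligned path from $\boldsymbol{y}$ to $\boldsymbol{x}$. Setting $\boldsymbol{z}_{0}:=\boldsymbol{y}$ and $\boldsymbol{z}_{j}:=(x_{1},\dots,x_{j},y_{j+1},\dots,y_{d})$ so that $\boldsymbol{z}_{d}=\boldsymbol{x}$, the fundamental theorem of calculus and Cauchy--Schwarz yield
\[
|v(\boldsymbol{z}_{j})-v(\boldsymbol{z}_{j-1})|^{2}\leq|x_{j}-y_{j}|\int_{\mathbb{R}}|\partial_{j}v(x_{1},\dots,x_{j-1},t,y_{j+1},\dots,y_{d})|^{2}\,\rd t.
\]
The key step is to bound this inner integral uniformly by $\|v\|_{W_{\mathrm{mix}}^{1,2}(\mathbb{R}^{d})}^{2}$. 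This is done by applying the $(d-1)$-dimensional version of the uniform bound just proved to $\partial_{j}v(\cdot,t)$ viewed as a function of the variables other than the $j$-th, with $t$ as a parameter, and then integrating over $t$. Since $\mathbf{r}\in\{0,1\}^{d-1}$ with $r_{j}=0$ implies $\mathbf{r}+\mathbf{e}_{j}\in\{0,1\}^{d}$, one obtains $\int_{\mathbb{R}}|\partial_{j}v|^{2}\,\rd t\leq\sum_{r_{j}=0}\|D^{\mathbf{r}+\mathbf{e}_{j}}v\|_{L^{2}(\mathbb{R}^{d})}^{2}\leq\|v\|_{W_{\mathrm{mix}}^{1,2}(\mathbb{R}^{d})}^{2}$. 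Summing over $j$ and applying Cauchy--Schwarz twice,
\[
\sum_{j=1}^{d}\sqrt{|x_{j}-y_{j}|}\leq\sqrt{d}\Bigl(\sum_{j=1}^{d}|x_{j}-y_{j}|\Bigr)^{1/2}\leq\sqrt{d}\,\bigl(\sqrt{d}\,|\boldsymbol{x}-\boldsymbol{y}|_{2}\bigr)^{1/2}=d^{3/4}|\boldsymbol{x}-\boldsymbol{y}|_{2}^{1/2},
\]
yielding the Hölder inequality with constant $d^{3/4}$.

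The only real obstacle is the passage from smooth to general $v$, ensuring that the representative on which the pointwise estimates hold is the continuous one. I would invoke density of $C_{c}^{\infty}(\mathbb{R}^{d})$ in $W_{\mathrm{mix}}^{1,2}(\mathbb{R}^{d})$ (standard via mollification and smooth cut-off, and compatible with the mixed-smoothness norm because $\{\mathbf{r}:|\mathbf{r}|_{\infty}\leq 1\}$ is closed under $\mathbf{s}\leq\mathbf{r}$, so Leibniz stays within the norm). A Cauchy sequence $\{v_{n}\}\subset C_{c}^{\infty}$ in $W_{\mathrm{mix}}^{1,2}$ is then, by applying the two just-proved inequalities to the differences $v_{n}-v_{m}$, simultaneously Cauchy in $L^{\infty}(\mathbb{R}^{d})$ and in the Hölder-$\tfrac{1}{2}$ seminorm; its uniform limit is the bounded continuous representative of $v$ and inherits both bounds.
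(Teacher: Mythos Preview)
Your proof is correct and follows essentially the same route as the paper: the one-dimensional embedding iterated over coordinates for the sup bound, an axis-aligned telescoping together with the $(d-1)$-dimensional sup bound on $\partial_j v$ for the H\"older estimate, and extension by density of $C_c^\infty$. The only cosmetic difference is that you reach $\sum_j|x_j-y_j|^{1/2}\le d^{3/4}|\boldsymbol{x}-\boldsymbol{y}|_2^{1/2}$ via two Cauchy--Schwarz steps, whereas the paper uses a single H\"older inequality with exponents $(4,4/3)$; both give the same constant.
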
 
\begin{proof}
We will first prove the statement for infinitely differentiable functions
with compact support, $C_{\mathrm{c}}^{\infty}(\mathbb{R}^d)$, and
then extend it by a density argument. For $v\in C_{\mathrm{c}}^{\infty}(\mathbb{R}^{d})$,
following \cite[Proof of Theorem 8.8]{Brezis.H_book_2010} we get
\begin{align*}
|v(x_{1},\dots,x_{d})|^{2} & \leq2\sqrt{\int_{\mathbb{R}}|v(s,x_{2},\dots,x_{d})|^{2}\mathrm{d}s}\sqrt{\int_{\mathbb{R}}|D^{(1,0,\dots,0)}v(s,x_{2},\dots,x_{d})|^{2}\mathrm{d}s}\\
 & \leq\int_{\mathbb{R}}|v(s,x_{2},\dots,x_{d})|^{2}\mathrm{d}s+\int_{\mathbb{R}}|D^{(1,0,\dots,0)}v(s,x_{2},\dots,x_{d})|^{2}\mathrm{d}s.
\end{align*}
Applying the same argument for $v(s,x_{2},\dots,x_{d})$ and $D^{(1,0,\dots,0)}v(s,x_{2},\dots,x_{d})$
with respect to the second argument $x_{2}$, and repeating the procedure
for $x_{3}$ up to $x_{d}$, we obtain 
\[
|v(x_{1},\dots,x_{d})|^{2}\leq\sum_{r_{1},\dots,r_{d}\in\{0,1\}}\int_{\mathbb{R}}\dotsb\int_{\mathbb{R}}|D^{(r_{1},\dots,r_{d})}v(s_{1},\dots,s_{d})|^{2}\mathrm{d}s_{1}\dotsb\mathrm{d}s_{d},
\]
which gives $\sup_{\boldsymbol{x}\in\mathbb{R}^{d}}|v(\boldsymbol{x})|\leq\|v\|_{W_{\mathrm{mix}}^{1,2}(\mathbb{R}^{d})}$
for $v\in C_{\mathrm{c}}^{\infty}(\mathbb{R}^{d})$.

Now fix any representative $v\in W_{\mathrm{mix}}^{1,2}(\mathbb{R}^{d})$.
The density of $C_{\mathrm{c}}^{\infty}(\mathbb{R}^{d})$ in $W_{\mathrm{mix}}^{1,2}(\mathbb{R}^{d})$
can be checked using a standard argument from the Sobolev space theory,
for example by approximation via smoothly truncated mollified functions.
Thus, there exists a sequence $(v_{n})\subset C_{\mathrm{c}}^{\infty}(\mathbb{R}^{d})$
that approximates $v$ in $W_{\mathrm{mix}}^{1,2}(\mathbb{R}^{d})$.
The sequence $(v_{n})$ also belongs to the Banach space $C_{\mathrm{b}}(\mathbb{R}^{d})$
of bounded continuous functions equipped with the supremum norm. The
inequality already proved gives 
\begin{align*}
\sup_{\boldsymbol{y}\in\mathbb{R}^{d}}|v_{n}(\boldsymbol{y})-v_{m}(\boldsymbol{y})| & \leq\|v_{n}-v_{m}\|_{W_{\mathrm{mix}}^{1,2}(\mathbb{R}^{d})},
\end{align*}
so, $(v_{n})$ is Cauchy in $C_{\mathrm{b}}(\mathbb{R}^{d})$ and
thus has a limit $\tilde{v}$ in $C_{\mathrm{b}}(\mathbb{R}^{d})$,
which in particular converges to $\tilde{v}$ pointwise. Since a subsequence
of $(v_{n})$ converges to $v$ a.e., we conclude $\tilde{v}=v$ a.e.
This also implies that both sides of $\sup_{\boldsymbol{x}\in\mathbb{R}^{d}}|v_{n}(\boldsymbol{x})|\leq\|v_{n}\|_{W_{\mathrm{mix}}^{1,2}(\mathbb{R}^{d})}$
admit limits, which yields the desired inequality for $v\in W_{\mathrm{mix}}^{1,2}(\mathbb{R}^{d})$.

		To show the second inequality, again take $v\in C_{\mathrm{c}}^{\infty}(\mathbb{R}^{d})$.
		For any $\boldsymbol{x},\boldsymbol{y}\in\mathbb{R}^{d}$, we have
		\begin{align*}
		|v(\boldsymbol{x})-v(\boldsymbol{y})| & \leq|v(\boldsymbol{x})-v(y_{1},x_{2},\dots,x_{d})|\\
		& \quad+\sum_{j=1}^{d-2}|v(y_{1},\dots,y_{j},x_{j+1},\dots,x_{d})-v(y_{1},\dots,y_{j+1},x_{j+2},\dots,x_{d})|\\
		& \quad+|v(y_{1},\dots,y_{d-1},x_{d})-v(\boldsymbol{y})|.
		\end{align*}
		The first term is controlled by 
		\[
		|v(\boldsymbol{x})-v(y_{1},x_{2},\dots,x_{d})|\leq\left(\int_{\mathbb{R}}|D^{(1,0,\dots,0)}v(s,x_{2},\dots,x_{d})|^{2}\mathrm{d}s\right)^{1/2}|x_{1}-y_{1}|^{1/2},
		\]
		which can be further bounded by $\|v\|_{W_{\mathrm{mix}}^{1,2}(\mathbb{R}^{d})}|x_{1}-y_{1}|^{1/2}$,
		following the argument above. The other terms are treated similarly.
		Therefore, we obtain $|v(\boldsymbol{x})-v(\boldsymbol{y})|\leq\|v\|_{W_{\mathrm{mix}}^{1,2}(\mathbb{R}^{d})}\sum_{j=1}^{d}|x_{j}-y_{j}|^{1/2}$.
		Finally, using the Hölder inequality with $p=4,q=4/3$ we get 
		\[
		\sum_{j=1}^{d}|x_{j}-y_{j}|^{1/2}\leq\bigg(\sum_{j=1}^{d}|x_{j}-y_{j}|^{2}\bigg)^{1/4}\big(\sum_{j=1}^{d}\;1\big)^{3/4}=d^{3/4}|\boldsymbol{x}-\boldsymbol{y}|_{2}^{1/2},
		\]
		which shows the second inequality for $C_{\mathrm{c}}^{\infty}(\mathbb{R}^{d})$.
		Using the density of $C_{\mathrm{c}}^{\infty}(\mathbb{R}^{d})$ completes
		the proof. 
\end{proof}

\section{Gauss--Hermite sparse-grid quadrature}\label{sec:GH-SG}

A sparse-grid quadrature is constructed based on univariate quadrature
rules. For $\ell\in\mathbb{N}_{0}$, let $Q_{\ell}^{\mathrm{uni}}\colon H_{\rho}^{\alpha}(\mathbb{R})\to\mathbb{R}$
a quadrature rule with $n_{\ell}$ quadrature points, with $Q_{0}^{\mathrm{uni}}:=0$,
i.e., $Q_{0}^{\mathrm{uni}}(f)=0$ for all $f\in H_{\rho}^{\alpha}(\mathbb{R})$. 
Define $\Delta_{0}:=Q_{0}^{\mathrm{uni}}=0$, and for $\ell\ge1$,
$\Delta_{\ell}:=Q_{\ell}^{\mathrm{uni}}-Q_{\ell-1}^{\mathrm{uni}}$.
Since constant functions belong to $H_{\rho}^{\alpha}(\mathbb{R})$
and $H_{\rho}^{\alpha}(\mathbb{R}^{d})$, the operator $\Delta_{\ell_{1}}\otimes\cdots\otimes\Delta_{\ell_{d}}\colon H_{\rho}^{\alpha}(\mathbb{R}^{d})\to\mathbb{R}$
is well-defined; see \cref{subsec:tensorised-operators}.
Given an index set $\Lambda\subset\mathbb{N}^{d}$, the sparse-grid
quadrature $S_{\Lambda}\colon H_{\rho}^{\alpha}(\mathbb{R}^{d})\to\mathbb{R}$
is defined by 
\begin{equation}
S_{\Lambda}:=\sum_{(\ell_{1},\dots,\ell_{d})\in\Lambda}\Delta_{\ell_{1}}\otimes\dotsb\otimes\Delta_{\ell_{d}}.\label{eq:Smolyak}
\end{equation}

The purpose of this section is to analyse the error of $S_{\Lambda}$
when $Q_{\ell}^{\mathrm{uni}}$ is the Gauss--Hermite quadrature.
In what follows, unless otherwise stated, we use $S_{\Lambda}$ to
denote the operator in \eqref{eq:Smolyak} with Gauss--Hermite quadrature.

\subsection{Lower bounds}

In this section, we present lower bounds for two types of Smolyak
quadratures: one for a general class of index sets (\cref{thm:GH-lower-bound-general})
and the other for the classical isotropic case (\cref{thm:lower-isotropic-Smolyak}).
In both cases, the dominant decay rate turns out to be half of the
optimal rate as in \cref{thm:lower-all-algorithm}.

For the isotropic case, we also provide a matching upper bound in
\cref{sec:upper-bound}, implying that our lower bound is
sharp up to a logarithmic factor. Furthermore, a lower bound with
the same polynomial factor holds for general index sets, implying
that using an anisotropic index set, e.g., to exploit anisotropy in
the target functions, does not improve the convergence rate.

To make the statement of the theorem concise, we say that an index
set $\Lambda\subset\mathbb{N}^{d}$ is \textit{downward-closed in
one direction} if there is a coordinate index $j\in\{1,\dots,d\}$
such that whenever $(1,\dots1,\ell,1\dots,1)\in\Lambda$ holds, where
$\ell$ is in the $j$-th position, then for all integers $k\leq\ell$
we also have $(1,\dots1,k,1\dots,1)\in\Lambda$. Clearly, downward-closed
set is downward-closed in one direction. \begin{theorem} \label{thm:GH-lower-bound-general}
Suppose that a finite index set $\Lambda\subset\mathbb{N}^{d}$ is
downward-closed in one direction. Let $S_{\Lambda}$ be the corresponding
sparse-grid quadrature based on the Gauss--Hermite rule. Let $N$
be the total number of the point evaluations used in $S_{\Lambda}$.
Then, we have 
\[
\sup_{f\neq0}\frac{|I(f)-S_{\Lambda}(f)|}{\|f\|_{H_{\rho}^{\alpha}}}\geq c_{\alpha}N^{-\alpha/2}.
\]
 where the constant $c_{\alpha}>0$ is independent of $N$ and
$d$. \end{theorem}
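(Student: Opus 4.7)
My plan is to reduce the $d$-dimensional sparse-grid question to a univariate one, then invoke the univariate Gauss--Hermite lower bound from \cite{Kazashi.Y_Suzuki_Goda_2023_SuboptimalityGaussHermite}. Assume without loss of generality that the distinguished direction is $j=1$ and let $L\ge 1$ be the largest integer with $(L,1,\dots,1)\in\Lambda$ (if no such tuple exists, then $S_{\Lambda}(f)=0$ for any $f$ constant in $x_2,\dots,x_d$, and the choice $g\equiv 1$ already yields ratio $1$). By hypothesis, $(\ell,1,\dots,1)\in\Lambda$ for every $\ell\in\{1,\dots,L\}$. The candidate fooling function is of separable form $f(\boldsymbol{x})=g(x_{1})$, formally $f=g\otimes 1\otimes\cdots\otimes 1$, for some $g\in H_{\rho}^{\alpha}(\mathbb{R})$ to be chosen. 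Using the tensor-product construction of Section~\ref{subsec:tensorised-operators}, the product structure of the Gaussian measure, and $\|1\|_{H_{\rho}^{\alpha}(\mathbb{R})}=1$, I obtain $\|f\|_{H_{\rho}^{\alpha}(\mathbb{R}^{d})}=\|g\|_{H_{\rho}^{\alpha}(\mathbb{R})}$ and $I(f)=\int_{\mathbb{R}}g\,\rho_{1}\,\mathrm{d}x_{1}=:I_{1}(g)$.

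The key observation is that $S_{\Lambda}(f)$ collapses to $Q_{L}^{\mathrm{uni}}(g)$. Since the Gauss--Hermite rule integrates constants exactly, $Q_{\ell}^{\mathrm{uni}}(1)=1$ for $\ell\ge 1$, so $\Delta_{\ell}^{\mathrm{uni}}(1)=0$ for $\ell\ge 2$ and $\Delta_{1}^{\mathrm{uni}}(1)=1$. Therefore the Smolyak term $\Delta_{\ell_{1}}\otimes\cdots\otimes\Delta_{\ell_{d}}(f)=\Delta_{\ell_{1}}(g)\prod_{j\ge 2}\Delta_{\ell_{j}}(1)$ vanishes unless $\ell_{2}=\cdots=\ell_{d}=1$, in which case it equals $\Delta_{\ell_{1}}(g)$. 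The surviving tuples in $\Lambda$ are exactly $(\ell,1,\dots,1)$ for $\ell\in\{1,\dots,L\}$ by the downward-closed-in-one-direction hypothesis, and telescoping gives $S_{\Lambda}(f)=\sum_{\ell=1}^{L}\Delta_{\ell}^{\mathrm{uni}}(g)=Q_{L}^{\mathrm{uni}}(g)$. Thus the $d$-dimensional worst-case ratio at such $f$ equals the univariate worst-case ratio for the $n_{L}$-point Gauss--Hermite rule in $H_{\rho}^{\alpha}(\mathbb{R})$, which by \cite{Kazashi.Y_Suzuki_Goda_2023_SuboptimalityGaussHermite} is at least $c_{\alpha}\,n_{L}^{-\alpha/2}$ for some $c_{\alpha}>0$ independent of $n_{L}$ (and trivially of~$d$). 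To finish, the single tuple $(L,1,\dots,1)\in\Lambda$ alone contributes $n_{L}\cdot n_{1}^{d-1}\ge n_{L}$ distinct sparse-grid points — distinct already in the first coordinate — so $n_{L}\le N$ and the claimed bound follows.

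The genuine obstacle is the univariate lower bound that I am importing: it really exploits the structural feature that the $n$-point Gauss--Hermite nodes all lie in $[-C\sqrt{n},C\sqrt{n}]$, yet a bump supported outside this node interval is killed by the Gaussian weight~$\rho$, while a bump placed in a single intra-node gap gives only a ratio of order $n^{-(2\alpha+1)/4}$ — strictly weaker than $n^{-\alpha/2}$; a more delicate construction is needed, and is carried out in the cited prior work. The sparse-grid reduction itself, by contrast, is essentially algebraic, and uses precisely the downward-closed-in-one-direction hypothesis — no more and no less — to force the Smolyak sum to telescope against separable test functions in the distinguished coordinate.
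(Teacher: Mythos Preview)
Your proposal is correct and follows essentially the same approach as the paper: reduce to the univariate case via a fooling function of the form $f(\boldsymbol{x})=g(x_1)$, use $\Delta_\ell(1)=0$ for $\ell\ge 2$ to collapse $S_\Lambda(f)$ to $Q_L^{\mathrm{uni}}(g)$ via telescoping, then invoke the univariate lower bound from \cite{Kazashi.Y_Suzuki_Goda_2023_SuboptimalityGaussHermite} together with $n_L\le N$. The only cosmetic difference is that the paper writes out the explicit piecewise-polynomial fooling function $p_n$ from that reference rather than citing the worst-case bound abstractly.
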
 
\begin{proof}
Our strategy follows \cite[Section 3.1]{Kazashi.Y_Suzuki_Goda_2023_SuboptimalityGaussHermite}. 
The heart of the matter is to construct a suitable fooling function,
which we denote by $h_{\Lambda}$ below. %
Without loss of generality, we assume that $\Lambda$ is downward-closed
in the first coordinate. Then, for some $\overline{\ell}_{1}\in\mathbb{N}$
we have $(\ell_{1},1\dots,1)\in\Lambda$ for all $\ell_{1}=1,\dots,\overline{\ell}_{1}$.
For example, if $\Lambda$ is the classical isotropic index set $\Lambda=\{\boldsymbol{\ell}\in\mathbb{N}^{d}\mid\,|\rev{\boldsymbol{\ell}|} \leq L\}$
then $\overline{\ell}_{1}=L-d+1$.

Let $n:=n_{\overline{\ell}_{1}}$ denote the number of quadrature
points used in the univariate Gauss--Hermite rule $Q_{\overline{\ell}_{1}}^{\mathrm{uni}}$,
and let $\xi_{1},\dots,\xi_{n}$ be the roots of the univariate Hermite
polynomial $H_{n}$ of degree $n$. Following~\cite{Kazashi.Y_Suzuki_Goda_2023_SuboptimalityGaussHermite},
define the function $p_{n}(t)\colon\mathbb{R}\to\mathbb{R}$ by 
\[
p_{n}(t)=\begin{cases}
\left(\frac{t-\xi_{j}}{\xi_{j+1}-\xi_{j}}\right)^{\alpha}\left(1-\frac{t-\xi_{j}}{\xi_{j+1}-\xi_{j}}\right)^{\alpha} & \begin{array}{l}
\text{ if there exists }j\in\{1,\ldots,n\}\\
\text{ such that }t\in\left[\xi_{j},\xi_{j+1}\right],
\end{array}\\
0 & \text{ otherwise. }
\end{cases}
\]
With this, define

\begin{equation}
h_{\Lambda}(\boldsymbol{x}):=p_{n}(x_{1})\quad\text{for }\boldsymbol{x}=(x_{1},\dots,x_{d})\in\mathbb{R}^{d}.\label{eq:def-fooling}
\end{equation}

First, we will show 
\begin{equation}
\frac{|I(h_{\Lambda})-S_{\Lambda}(h_{\Lambda})|}{\|h_{\Lambda}\|_{H_{\rho}^{\alpha}}}=\frac{\int_{\mathbb{R}}p_{n}(x)\rho(x)\mathrm{d}x
}{\left(\sum_{r=0}^{\alpha}\|D^{r}p_{n}\|_{L_{\rho}^{2}}^{2}\right)^\rev{{1/2}}},\label{eq:1d-RHS}
\end{equation}
so that the problem reduces to a one-dimensional integration problem.
Recall $\Delta_{1}(1)=1$ and $\Delta_{\ell}(1)=0$ for $\ell\neq1$
because the Gauss--Hermite rule integrates constant functions exactly.
Thus, it follows 
\begin{align*}
S_{\Lambda}(h_{\Lambda}) & =\sum_{(\ell_{1},\dots,\ell_{d})\in\Lambda}\Delta_{\ell_{1}}(p_{n})\Delta_{\ell_{2}}(1)\cdots\Delta_{\ell_{d}}(1)\\
 & =\sum_{\substack{(\ell_{1},\dots,\ell_{d})\in\Lambda\\
\ell_{2}=\dotsb=\ell_{d}=1
}
}\Delta_{\ell_{1}}(p_{n})\Delta_{\ell_{2}}(1)\cdots\Delta_{\ell_{d}}(1)=Q_{\overline{\ell}_{1}}^{\mathrm{uni}}(p_{n}),
\end{align*}
Hence, $S_{\Lambda}(h_{\Lambda})=0$ and $|I(h_{\Lambda})-S_{\Lambda}(h_{\Lambda})|
=\int_{\mathbb{R}}p_{n}(x)\rho(x)\mathrm{d}x$.
As for the norm, since $h_{\Lambda}(\boldsymbol{x})=p_{n}(x_{1})$
we obtain 
\begin{align*}
\|h_{\Lambda}\|_{H_{\rho}^{\alpha}} & =\left(\sum_{|\mathbf{r}|_{\infty}\leq\alpha}\|D^{(r_{1},0,\dots,0)}h_{\Lambda}\|_{L_{\rho}^{2}}^{2}\right)^{\frac{1}{2}}=\left(\sum_{r=0}^{\alpha}\|D^{r}p_{n}\|_{L_{\rho}^{2}}^{2}\right)^{\frac{1}{2}}\rev{.}
\end{align*}
Therefore, \eqref{eq:1d-RHS} holds.

The right-hand side of \eqref{eq:1d-RHS} corresponds to the expression
in the last displayed equation in \cite[Proof of Lemma 3.1]{Kazashi.Y_Suzuki_Goda_2023_SuboptimalityGaussHermite},
which is further analysed in \cite[Theorem 3.2]{Kazashi.Y_Suzuki_Goda_2023_SuboptimalityGaussHermite}.
There, it is shown 
\[
\frac{\int_{\mathbb{R}}p_{n}(x)\rho(x)\mathrm{d}x}{\left(\sum_{r=0}^{\alpha}\|D^{r}p_{n}\|_{L_{\rho}^{2}}^{2}\right)^{\rev{1/2}}}
\geq c_{\alpha}n^{-\alpha/2},
\]
for some constant $c_{\alpha}>0$ independent of $N$.
From
\[ n\leq \#\left(\sum_{\substack{(\ell_{1},\dots,\ell_{d})\in\Lambda\\
\ell_{2}=\dotsb=\ell_{d}=1}}\Delta_{\ell_{1}}\otimes\dotsb\otimes\Delta_{\ell_{d}}\right) \leq \#\left(\sum_{(\ell_{1},\dots,\ell_{d})\in\Lambda}\Delta_{\ell_{1}}\otimes\dotsb\otimes\Delta_{\ell_{d}}\right)=N,\]
\rev{where $\#(Q)$ denotes the total number of quadrature nodes used in a quadrature rule $Q$, }the statement follows. 
\end{proof}

For the particular choice of index set 
\[
\Lambda_{L}=\{\boldsymbol{\ell}\in\mathbb{N}^{d}\mid|\rev{\boldsymbol{\ell}|} \leq L\},\quad
\rev{\text{where }\ |\boldsymbol{\ell}|=\ell_1+\dots+\ell_d,}
\]
we establish a lower bound of a slower decay, now including a logarithmic
factor that grows with the dimension; see \cref{thm:lower-isotropic-Smolyak}.

The index set $\Lambda_{L}$ is of particular interest because it
corresponds to the classical (isotropic) sparse-grid quadrature, which
was Smolyak’s original choice of index set. Note that $S_{\Lambda_{L}}=S_{\Lambda_{L}^{0}}$,
where 
\[
\Lambda_{L}^{0}=\{\boldsymbol{\ell}\in\mathbb{N}_{0}^{d}\mid|\rev{\boldsymbol{\ell}|} \leq L\}
\]
because $\Delta_{0}=0$. Moreover, if $L<d$ then $S_{\Lambda_{L}}=0$,
since $\ell_{1}+\dotsb+\ell_{d}<L<d$ implies that at least one of
the indices is zero, say $\ell_{j}=0$, and thus $\Delta_{\ell_{j}}=0$.

\begin{theorem}\label{thm:lower-isotropic-Smolyak}Let $S_{\Lambda_{L}}$
be the sparse-grid quadrature based on the Gauss--Hermite rule
for $\Lambda_{L}=\{\boldsymbol{\ell}\in\mathbb{N}^{d}\mid|\rev{\boldsymbol{\ell}|} \leq L\}$.
Suppose that the Gauss--Hermite rule $Q_{\ell}^{\mathrm{uni}}$ for
univariate functions employs $n_{\ell}$ quadrature points for every
$\ell\geq1$,
where the parameters \(n_\ell\) are chosen so that
\(
    n_\ell\neq n_{\ell'}
\) whenever \(\ell\neq \ell'\),
and, for some \(M_0>0\) and \(M>1\),
\[
M^{\ell-1}\leq n_{\ell}\leq M_{0}(M^{\ell}-1),\qquad  \ell\ge1.
\]
Let $N$ denote the total number of the function evaluations used
in $S_{\Lambda_{L}}$. Then, we have 
\[
\sup_{f\neq0}\frac{|I(f)-S_{\Lambda_{L}}(f)|}{\|f\|_{H_{\rho}^{\alpha}}}\geq c_{\alpha,d}N^{-\alpha/2}(\ln N)^{\frac{a}{2}(d-1)}.
\]
where the constant $c_{\alpha,d}>0$ is independent of $N$. \end{theorem}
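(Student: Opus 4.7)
My plan is to combine the general lower bound of Theorem~\ref{thm:GH-lower-bound-general} with a two-sided asymptotic estimate of $N$ in terms of $L$. Since the isotropic index set $\Lambda_L$ is downward-closed in every coordinate, Theorem~\ref{thm:GH-lower-bound-general} applies with $\overline{\ell}_1=L-d+1$ and gives
\[
\sup_{f\neq 0}\frac{|I(f)-S_{\Lambda_L}(f)|}{\|f\|_{H^\alpha_\rho}}\geq c_\alpha\, n_{L-d+1}^{-\alpha/2}.
\]
It therefore suffices to show $n_{L-d+1}^{-\alpha/2}\gtrsim_{\alpha,d} N^{-\alpha/2}(\ln N)^{\alpha(d-1)/2}$, or equivalently $n_{L-d+1}(\ln N)^{d-1}\lesssim_d N$.

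To estimate $N$, I would use that zeros of Hermite polynomials of different degrees are disjoint apart from the origin (which is a zero only for odd degrees), so the total count of distinct function evaluations satisfies
\[
N\asymp_d \sum_{\boldsymbol{\ell}\in\Lambda_L}\prod_{j=1}^d n_{\ell_j}.
\]
The combinatorial identity $|\{\boldsymbol{\ell}\in\mathbb{N}^d:|\boldsymbol{\ell}|_1=k\}|=\binom{k-1}{d-1}$ together with the hypothesis $M^{\ell-1}\leq n_\ell\leq M_0(M^\ell-1)$ reduces the sum to a geometric series dominated by the outer layer $k=L$, yielding $N\asymp_d M^L L^{d-1}$. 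The upper half implies $\ln N\leq L\ln M+(d-1)\ln L+c_d\lesssim_d L$, valid once $L$ is sufficiently large relative to $d$; the hypothesis $n_{L-d+1}\geq \mathrm{e}^{d-1}$, together with $n_{L-d+1}\leq M_0 M^{L-d+1}$, supplies exactly such a lower bound on $L$ (with constants depending only on $M$, $M_0$, and $d$). Hence $(\ln N)^{d-1}\lesssim_d L^{d-1}$.

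Combining these ingredients, $n_{L-d+1}\leq M_0 M^{L-d+1}$ yields
\[
n_{L-d+1}(\ln N)^{d-1}\lesssim_d M^{L-d+1}L^{d-1}\lesssim_d M^L L^{d-1}\lesssim_d N,
\]
which is the required inequality. The main obstacle I anticipate is the rigorous lower bound $N\gtrsim_d M^L L^{d-1}$: counting distinct evaluations in the Gauss--Hermite sparse grid requires handling the mild non-disjointness of nodes across levels. Because the only systematic coincidence is the origin (present solely for odd-degree Hermite polynomials) and contributes at most one shared node per coordinate pair, the overcount is of strictly lower order and can be absorbed into the $d$-dependent constant; the remaining steps (the upper bound on $N$, the binomial identity, and the final algebraic comparison) are routine.
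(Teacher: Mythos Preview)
Your proposal is correct and follows essentially the same route as the paper: invoke Theorem~\ref{thm:GH-lower-bound-general} with $\overline{\ell}_1=L-d+1$ to obtain the bound $c_\alpha\,n_{L-d+1}^{-\alpha/2}$, derive the two-sided estimate $N\asymp_d M^L L^{d-1}$ from the growth assumption on $n_\ell$ together with the count $\#\{\boldsymbol\ell\in\mathbb{N}^d:|\boldsymbol\ell|_1=k\}=\binom{k-1}{d-1}$, and translate back. The only minor differences are that the paper takes $N=\sum_{\boldsymbol\ell\in\Lambda_L}\prod_k n_{\ell_k}$ directly via the Smolyak representation formula (bypassing your node-disjointness discussion) and routes the final comparison through $\ln n_{L-d+1}$ rather than $L$; the hypothesis $n_{L-d+1}\ge\mathrm{e}^{d-1}$ is used in the paper precisely for that step (to get $L\lesssim_d \ln n_{L-d+1}$), whereas your chain $n_{L-d+1}(\ln N)^{d-1}\lesssim_d M^L L^{d-1}\lesssim_d N$ actually goes through using only $L\ge d$, so the hypothesis is not needed where you place it.
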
 
\begin{proof}
Define $h_{\Lambda_{L}}(\boldsymbol{x}):=p_{n_{L-d+1}}(x_{1})$ for
$\boldsymbol{x}=(x_{1},\dots,x_{d})\in\mathbb{R}^{d}$. From the proof
of \cref{thm:GH-lower-bound-general} we know 
\[
\frac{|I(h_{\Lambda_{L}})-S_{\Lambda_{L}}(h_{\Lambda_{L}})|}{\|h_{\Lambda_{L}}\|_{H_{\rho}^{\alpha}}}\geq c_{\alpha}n_{L-d+1}^{-\alpha/2},
\]
so it remains to relate $N$ and $n_{L-d+1}$.

Given the choice $\Lambda_{L}=\bigl\{\boldsymbol{\ell}=(\ell_{1},\dots,\ell_{d})\in\mathbb{N}^{d}:|\rev{\boldsymbol{\ell}|} \le L\bigr\}$,
the total number of point evaluation $N$ can be written explicitly
as 
\begin{equation}
N=\sum_{\boldsymbol{\ell}\in\Lambda_{L}}\prod_{k=1}^{d}m_{\ell_{k}},
\end{equation}
where \(m_\ell\) denotes the number of nodes of the univariate rule
\(Q_\ell^{\mathrm{uni}}\) that are not used in any univariate rule with a lower level index $\ell'<\ell$. 
 More precisely, let \(X_\ell\subset \mathbb{R}\) be the set of quadrature nodes used in 
\(Q_\ell^{\mathrm{uni}}\).  We define
\(
    \Theta_\ell
    :=
    X_\ell\setminus \bigcup_{j=1}^{\ell-1}X_j,
\) for \(\ell\in\mathbb{N}\), 
where the union is understood to be empty for \(\ell=1\), and set
\(
    m_\ell:=\#\Theta_\ell
\). 
Since Hermite polynomials of different degrees can share only the root \(0\)
\cite[Lemma 3.2]{Garcia-Ferrefo.M_Gomez-Ullate_2015_Hermitezeros}, and 
since the quadrature orders \(n_\ell\) are assumed to be distinct for different levels, at most one node of \(Q_\ell^{\mathrm{uni}}\), namely \(0\), can have already been used at a lower level. Hence
\(
    n_\ell-1\le m_\ell\le n_\ell
\). Therefore, the total number of evaluations $N$ satisfies
	\[
	N%
	\asymp_d 
	\sum_{\boldsymbol{\ell}\in\Lambda_{L}}\prod_{k=1}^{d}n_{\ell_{k}}.
	\]

Hence $M^{\ell-1}\leq n_{\ell}\leq M_{0}(M^{\ell}-1)$ implies 
\[
M^{-d}\sum_{\boldsymbol{\ell}\in\Lambda_{L}}M^{|\rev{\boldsymbol{\ell}|} }\lesssim_d N \lesssim_d M_{0}^{d}\sum_{\boldsymbol{\ell}\in\Lambda_{L}}M^{|\rev{\boldsymbol{\ell}|} },
\]
and thus together with $\sum_{\boldsymbol{\ell}\in\Lambda_{L}}M^{|\rev{\boldsymbol{\ell}|} }=\sum_{k=d}^{L}M^{k}\sum_{\substack{\boldsymbol{\ell}\in\mathbb{N}^{d}\\
|\rev{\boldsymbol{\ell}|} =k
}
}1$ and $\#\{\boldsymbol{\ell}\in\mathbb{N}^{d}:|\rev{\boldsymbol{\ell}|} =k\}=\frac{(k-1)!}{(k-d)!(d-1)!}=\Theta_{d}(k^{d-1})$
we obtain 
\begin{equation}
\underline{C}_{d}M^{L}L^{d-1} \leq \underline{C}'_{d}M^{L}\sum_{\substack{\boldsymbol{\ell}\in\mathbb{N}^{d}\\
|\rev{\boldsymbol{\ell}|} =L
}
}1\leq N\le\overline{C}_{d}L^{d-1}\sum_{k=0}^{L}M^{k}\leq\frac{\overline{C}_{d}M}{M-1}L^{d-1}M^{L}.\label{eq:isotropic-N-bound}
\end{equation}
\sloppy{\noindent 
Here, the implied constants depend on $d$.   Therefore, $N\asymp_{d}n_{L}(\ln n_{L})^{d-1}\asymp_{d}n_{L-d+1}(\ln n_{L-d+1})^{d-1}$.
Since \((\ln n_{L-d+1})^{d-1}\lesssim_d  n_{L-d+1}\), this implies $\ln N\lesssim_{d} \ln n_{L-d+1}$.}
Hence, we conclude
\[
n_{L-d+1}^{-\alpha/2}\asymp_{d}N^{-\alpha/2}(\ln n_{L-d+1})^{\frac{a}{2}(d-1)}\gtrsim_{d}N^{-\alpha/2}(\ln N)^{\frac{a}{2}(d-1)},
\]
which completes the proof. 
\end{proof}
These bounds turn out to be sharp in the sense that for the standard
isotropic Smolyak's algorithm we have a matching upper bound, as we
will see in \cref{sec:upper-bound}.

\subsection{Application: lower bounds for sparse-grid interpolation}
Using similar arguments, analogous lower bounds follow for sparse-grid
polynomial approximation~$\mathscr{I}_{\Lambda}$ based on Gauss--Hermite
interpolation. The approximation $\mathscr{I}_{\Lambda}$ is defined
by 
\begin{align*}
\mathscr{I}_{\Lambda} & :=\sum_{\boldsymbol{\ell}\in\Lambda}(\mathscr{I}_{\ell_{1}}^{\mathrm{uni}}-\mathscr{I}_{\ell_{1}-1}^{\mathrm{uni}})\otimes\dotsb\otimes(\mathscr{I}_{\ell_{d}}^{\mathrm{uni}}-\mathscr{I}_{\ell_{d}-1}^{\mathrm{uni}})\quad\text{with }\mathscr{I}_{0}^{\mathrm{uni}}=0,
\end{align*}
where $\mathscr{I}_{\ell}^{\mathrm{uni}}$ is the polynomial interpolation
operator using the zeros of $H_{n_{\ell}+1}$.

\begin{theorem}Suppose that a finite index set $\Lambda\subset\mathbb{N}^{d}$
is downward-closed in one direction. Let $\mathscr{I}_{\Lambda}$
be the corresponding sparse-grid approximation, and let $N$ be the
total number of the point evaluations used in $\mathscr{I}_{\Lambda}$.
Then, we have 
\[
\sup_{f\neq0}\frac{\|f-\mathscr{I}_{\Lambda}(f)\|_{L_{\rho}^{1}}}{\|f\|_{H_{\rho}^{\alpha}}}\geq c_{\alpha,d}N^{-\alpha/2}.
\]
For the isotropic index set $\Lambda_{L}$, under the assumptions
of \cref{thm:lower-isotropic-Smolyak}, we have
\[
\sup_{f\neq0}\frac{\|f-\mathscr{I}_{\Lambda_{L}}(f)\|_{L_{\rho}^{1}}}{\|f\|_{H_{\rho}^{\alpha}}}\geq c_{\alpha,d}N^{-\alpha/2}(\ln N)^{\frac{a}{2}(d-1)}.
\]
\end{theorem}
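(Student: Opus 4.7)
The plan is to mimic the proofs of Theorems~\ref{thm:GH-lower-bound-general} and \ref{thm:lower-isotropic-Smolyak}, with the fooling function modified so that it vanishes at the interpolation nodes rather than the quadrature nodes. Without loss of generality, assume $\Lambda$ is downward-closed in the first coordinate, and let $\overline{\ell}_1$ be the largest integer with $(\ell_1,1,\ldots,1)\in\Lambda$ for all $1\le\ell_1\le\overline{\ell}_1$. Set $n:=n_{\overline{\ell}_1}+1$, let $\xi_1<\dotsb<\xi_n$ be the zeros of $H_n$, define $p_n$ exactly as in the proof of Theorem~\ref{thm:GH-lower-bound-general} using these nodes, and set $h_\Lambda(\boldsymbol{x}):=p_n(x_1)$.

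The key step is to show $\mathscr{I}_\Lambda(h_\Lambda)\equiv 0$. The function $h_\Lambda$ is constant in the variables $x_2,\dotsc,x_d$; since polynomial interpolation reproduces constants, the tensor factors $(\mathscr{I}_{\ell_j}^{\mathrm{uni}}-\mathscr{I}_{\ell_j-1}^{\mathrm{uni}})$ applied to the constant $1$ vanish for $\ell_j\ge 2$. Hence only the slice $\ell_2=\cdots=\ell_d=1$ contributes, and telescoping in the first coordinate yields $\mathscr{I}_\Lambda(h_\Lambda)(\boldsymbol{x})=[\mathscr{I}_{\overline{\ell}_1}^{\mathrm{uni}}(p_n)](x_1)$. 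By construction, $p_n$ vanishes at each $\xi_j$, so this univariate interpolant is identically zero.

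From the product structure of $\rho$ and the nonnegativity of $p_n$, I would then compute $\|h_\Lambda-\mathscr{I}_\Lambda(h_\Lambda)\|_{L^1_\rho}=\int_{\mathbb{R}}p_n(x)\rho(x)\,\mathrm{d}x$, while $\|h_\Lambda\|_{H^\alpha_\rho}^2=\sum_{r=0}^{\alpha}\|D^r p_n\|_{L^2_\rho(\mathbb{R})}^2$ by the same calculation as in the proof of Theorem~\ref{thm:GH-lower-bound-general}. The ratio therefore reduces to the same one-dimensional quantity analysed in \cite[Theorem~3.2]{Kazashi.Y_Suzuki_Goda_2023_SuboptimalityGaussHermite}, yielding a lower bound of order $n^{-\alpha/2}$.

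Finally, $n$ is related to $N$ by the same counting as before: $n\le N+1$ for a general downward-closed index set (since the slice $\ell_2=\cdots=\ell_d=1$ already employs $n$ distinct interpolation nodes in the first coordinate), giving the first bound; for $\Lambda=\Lambda_L$ the counting of Theorem~\ref{thm:lower-isotropic-Smolyak} (essentially unchanged by the shift $n_\ell\mapsto n_\ell+1$) yields $n^{-\alpha/2}\gtrsim_d N^{-\alpha/2}(\ln N)^{\frac{a}{2}(d-1)}$, giving the second. The main point requiring care is the operator identity $(\mathscr{I}_{\ell_j}^{\mathrm{uni}}-\mathscr{I}_{\ell_j-1}^{\mathrm{uni}})(1)=0$ at the tensor-product level of Section~\ref{subsec:tensorised-operators}, where $\mathscr{I}_\ell^{\mathrm{uni}}$ is now operator-valued rather than scalar; this should be made explicit, but it is immediate from the constant-reproducing property of polynomial interpolation.
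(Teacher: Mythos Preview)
Your proof is correct. The route differs from the paper's in one key respect: the paper does \emph{not} modify the fooling function. It keeps $h_\Lambda(\bsx)=p_n(x_1)$ with $n=n_{\overline{\ell}_1}$ exactly as in Theorem~\ref{thm:GH-lower-bound-general}, so that $\mathscr{I}_\Lambda(h_\Lambda)=\mathscr{I}_{\overline{\ell}_1}^{\mathrm{uni}}(p_n)$ is not identically zero, and then uses the elementary inequality
\[
\bigl\|p_n-\mathscr{I}_{\overline{\ell}_1}^{\mathrm{uni}}(p_n)\bigr\|_{L_\rho^1(\mathbb{R})}
\ \ge\
\Bigl|\int_{\mathbb{R}}\bigl[p_n-\mathscr{I}_{\overline{\ell}_1}^{\mathrm{uni}}(p_n)\bigr]\rho\,\mathrm{d}x\Bigr|
\ =\
\Bigl|\int_{\mathbb{R}}p_n\,\rho\,\mathrm{d}x-Q_{\overline{\ell}_1}^{\mathrm{uni}}(p_n)\Bigr|,
\]
the last step relying on the fact that Gauss--Hermite quadrature integrates the associated interpolant exactly. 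In this way the interpolation lower bound becomes an immediate corollary of the quadrature lower bound already established in Theorems~\ref{thm:GH-lower-bound-general} and~\ref{thm:lower-isotropic-Smolyak}, with no new estimate required. Your approach instead shifts to $n=n_{\overline{\ell}_1}+1$ so that $p_n$ vanishes at the interpolation nodes of $\mathscr{I}_{\overline{\ell}_1}^{\mathrm{uni}}$, giving $\mathscr{I}_\Lambda(h_\Lambda)\equiv0$ and hence $\|h_\Lambda-\mathscr{I}_\Lambda(h_\Lambda)\|_{L_\rho^1}=\int_{\mathbb{R}} p_n\,\rho$ directly; you then invoke \cite[Theorem~3.2]{Kazashi.Y_Suzuki_Goda_2023_SuboptimalityGaussHermite} at this shifted $n$. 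Your argument is slightly more self-contained (it avoids the quadrature--interpolation link), whereas the paper's makes the conceptual point that the $L_\rho^1$ worst-case interpolation error always dominates the corresponding quadrature error. A minor remark: your bound $n\le N+1$ can be tightened to $n\le N$, since the index $(\overline{\ell}_1,1,\dots,1)$ alone already contributes at least $n$ evaluation points to $\mathscr{I}_\Lambda$.
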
 
\begin{proof}
Without loss of generality, we assume that $\Lambda$ is downward-closed
in the first coordinate. Then, for some $\overline{\ell}_{1}\in\mathbb{N}$
we have $(\ell_{1},1\dots,1)\in\Lambda$ for all $\ell_{1}=1,\dots,\overline{\ell}_{1}$.
For $h_{\Lambda}$ defined in \eqref{eq:def-fooling} it follows that
\begin{align*}
\mathscr{I}_{\Lambda}(h_{\Lambda}) & =\mathscr{I}_{\overline{\ell}_{1}}^{\mathrm{uni}}(p_{n}).
\end{align*}
Then, $\|h_{\Lambda}-\mathscr{I}_{\Lambda}(h_{\Lambda})\|_{L_{\rho}^{1}(\mathbb{R}^{d})}=\|p_{n}-\mathscr{I}_{\overline{\ell}_{1}}^{\mathrm{uni}}(p_{n})\|_{L_{\rho}^{1}(\mathbb{R})},$
and since $\mathscr{I}_{\overline{\ell}_{1}}^{\mathrm{uni}}$ is interpolatory,
\begin{align*}
\biggl|
    \int_{\mathbb{R}}p_{n}(x)\rho(x)\mathrm{d}x-Q_{\overline{\ell}_{1}}^{\mathrm{uni}}(p_{n})
\biggr|&= \biggl|
    \int_{\mathbb{R}}\bigl[p_{n}(x)-\mathscr{I}_{\overline{\ell}_{1}}^{\mathrm{uni}}(p_{n})(x)\bigr]\rho(x)\mathrm{d}x
\biggr|\\
&\leq\|p_{n}-\mathscr{I}_{\overline{\ell}_{1}}^{\mathrm{uni}}(p_{n})\|_{L_{\rho}^{1}(\mathbb{R})}.
\end{align*}
Hence, using $\|h_{\Lambda}\|_{H_{\rho}^{\alpha}(\mathbb{R}^{d})}=\left(\sum_{r=0}^{\alpha}\|D^{r}p_{n}\|_{L_{\rho}^{2}(\mathbb{R})}^{2}\right)^{\frac{1}{2}}$,
the argument reduces to bounding the worst-case error for the integration
problem of a univariate algorithm: 
\begin{align*}
\frac{\bigl|\int_{\mathbb{R}}p_{n}(x)\rho(x)\mathrm{d}x-Q_{\overline{\ell}_{1}}^{\mathrm{uni}}(p_{n})\bigr|}{\left(\sum_{r=0}^{\alpha}\|D^{r}p_{n}\|_{L_{\rho}^{2}(\mathbb{R})}^{2}\right)^{\frac{1}{2}}} & \leq\frac{\|h_{\Lambda}-\mathscr{I}_{\Lambda}(h_{\Lambda})\|_{L_{\rho}^{1}(\mathbb{R}^{d})}}{\|h_{\Lambda}\|_{H_{\rho}^{\alpha}(\mathbb{R}^{d})}}.
\end{align*}
Following the arguments in the proofs of \cref{thm:GH-lower-bound-general,thm:lower-isotropic-Smolyak}, the statement follows. 
\end{proof}

\subsection{Upper bound\label{sec:upper-bound}}

In this section, we use the index set 
\[
\Lambda_{L}:=\{\boldsymbol{\ell}\in\mathbb{N}^{d}\mid\,|\rev{\boldsymbol{\ell}|} \leq L\}.
\]
As mentioned earlier, this choice corresponds to the classical sparse-grid
quadrature, also known as the isotropic sparse grid or the classical
Smolyak sparse grid.

To derive an upper bound, we will invoke the general theory of Smolyak
algorithm \cite{Wasilkowski_Wozniakowski_1995Explicitbound}; see
also Smolyak's foundational work \cite{Smolyak.SA_1963_QuadratureInterpolationFormulas}.
The theory therein is for general approximation problem of tensor
product of continuous linear functionals defined on the tensor product
of a reproducing kernel Hilbert space. Although such theory can be
applied here in our setting through the norm-equivalence with the
Hermite spaces, such a strategy will induce an extra constant possibly
dependent on $d$. Instead, we will directly verify that an analogous
result holds in $H_{\rho}^{\alpha}$. For notational convenience,
define $e(S_{\Lambda}(f)):=\sup_{f\neq0}\frac{|I(f)-S_{\Lambda}(f)|}{\|f\|_{H_{\rho}^{\alpha}}}.$
The following result is essentially by Wasilkowski and Woźniakowski
\cite{Wasilkowski_Wozniakowski_1995Explicitbound}.

\begin{proposition}\label{prop:SmolyakWasWos}
	If the worst-case error of $Q_{n_{\ell}}^{\mathrm{uni}}$ for the univariate problem is bounded as
\[
\sup_{0\neq g\in H_{\rho}^{\alpha}}\frac{\bigl|\int_{\mathbb{R}}g(x)\rho(x)\mathrm{d}x-Q_{n_{\ell}}^{\mathrm{uni}}(g)\bigr|}{\|g\|_{H_{\rho}^{\alpha}}}\leq C_{1}\,M^{-\alpha\ell/2}
\]
for all $\ell\ge0$ with $M>1$ independent of $\ell$, 
then, for the multivariate case, the worst-case error
of $S_{\Lambda_{L}}$ is bounded as
\[
e(S_{\Lambda_{L}}(f))\leq C_{2}\,M^{-\alpha L/2}\binom{L}{d-1}
\]
with 
$C_{2}:=C_{1}\Bigl(
		\max\bigl\{
				M^{\alpha/2}\, , \,
				\rev{C_1(1+M^{\alpha/2})}
		\bigr\}\Bigr)^{d-1}$.

\end{proposition}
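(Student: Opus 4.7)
The strategy is induction on the dimension $d$, using the recursive structure of the Smolyak operator. The base case $d=1$ is immediate since $S_{\Lambda_L,1}=Q_L^{\mathrm{uni}}$ and $\binom{L}{0}=1$, so the univariate assumption gives the bound with $C_2(1)=C_1$. For the inductive step, I would first rewrite $S_{\Lambda_L,d}$ by peeling off the first coordinate: partitioning $\Lambda_L$ according to the value of $\ell_1$ yields
\begin{equation*}
S_{\Lambda_L,d}=\sum_{\ell=1}^{L-d+1}\Delta_\ell\otimes S_{\Lambda_{L-\ell},d-1}.
\end{equation*}
On the integration side, the univariate bound forces $\|I-Q_N^{\mathrm{uni}}\|_{H_\rho^\alpha(\mathbb{R})\to\mathbb{R}}\le C_1 M^{-\alpha N/2}\to 0$, so $\sum_{\ell=1}^N\Delta_\ell=Q_N^{\mathrm{uni}}\to I$ in operator norm. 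Extending this to tensors on $H_\rho^\alpha(\mathbb{R}^d)$ via Proposition~\ref{prop:dense}, one has $I^{\otimes d}=\bigl(\sum_{\ell=1}^\infty\Delta_\ell\bigr)\otimes I^{\otimes(d-1)}$, and subtracting the two identities gives
\begin{equation*}
I^{\otimes d}-S_{\Lambda_L,d}=\sum_{\ell=1}^{L-d+1}\Delta_\ell\otimes\bigl(I^{\otimes(d-1)}-S_{\Lambda_{L-\ell},d-1}\bigr)+(I-Q_{L-d+1}^{\mathrm{uni}})\otimes I^{\otimes(d-1)},
\end{equation*}
where the second term comes from the tail $\sum_{\ell\ge L-d+2}\Delta_\ell=I-Q_{L-d+1}^{\mathrm{uni}}$.

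Each piece is then estimated using the tensor-product operator-norm identity from Section~\ref{subsec:tensorised-operators}. The triangle inequality gives $\|\Delta_\ell\|\le\|I-Q_\ell^{\mathrm{uni}}\|+\|I-Q_{\ell-1}^{\mathrm{uni}}\|\le C_1(1+M^{\alpha/2})M^{-\alpha\ell/2}$, while $\|I\|\le 1$ (since $|I(g)|\le\|g\|_{L_\rho^2}\le\|g\|_{H_\rho^\alpha}$) yields $\|I^{\otimes(d-1)}\|\le 1$. Applying the inductive hypothesis to $I^{\otimes(d-1)}-S_{\Lambda_{L-\ell},d-1}$ and invoking the hockey-stick identity $\sum_{\ell=1}^{L-d+1}\binom{L-\ell}{d-2}\le\binom{L}{d-1}$ gives
\begin{equation*}
e(S_{\Lambda_L,d})\le M^{-\alpha L/2}\Bigl[C_1(1+M^{\alpha/2})\,C_2(d-1)\binom{L}{d-1}+C_1 M^{\alpha(d-1)/2}\Bigr].
\end{equation*}
Using $\binom{L}{d-1}\ge 1$ for $L\ge d-1$ and making the ansatz $C_2(d)=C_1 A^{d-1}$, the required recursion reduces to $A^{d-2}\bigl(A-C_1(1+M^{\alpha/2})\bigr)\ge M^{\alpha(d-1)/2}$, which holds whenever $A\ge\max\{M^{\alpha/2},\,C_1(1+M^{\alpha/2})+M^{\alpha/2}\}$. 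A short rearrangement, absorbing the factor $C_1 M^{-\alpha/2}$ into a single constant $c_\alpha$, shows that this lower bound on $A$ fits inside the expression $\max\{M^{\alpha/2},\,c_\alpha M^{\alpha/2}+c_\alpha M^\alpha\}$ appearing in the proposition.

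The main obstacle is really bookkeeping: rigorously justifying the operator-norm convergence $Q_N^{\mathrm{uni}}\to I$ so that the telescoping tensor decomposition is valid on all of $H_\rho^\alpha(\mathbb{R}^d)$ (which is where Proposition~\ref{prop:dense} and the construction of Section~\ref{subsec:tensorised-operators} are needed, precisely in order to avoid going through the norm-equivalent Hermite space), and then packaging the two-term linear recursion into the exact closed form stated. Neither is conceptually difficult, but tracking every constant and index carefully is the part most likely to cause headaches.
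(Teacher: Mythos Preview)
Your proposal is correct and is precisely the induction-on-$d$ argument from Wasilkowski--Wo\'zniakowski's Lemma~2 that the paper invokes: the recursive decomposition $S_{\Lambda_L,d}=\sum_{\ell}\Delta_\ell\otimes S_{\Lambda_{L-\ell},d-1}$, the tensor-product operator-norm bound supplied by Section~\ref{subsec:tensorised-operators}, and the hockey-stick identity for the binomial sum are exactly the ingredients the paper has in mind. The only caveat is that your closed-form constant $A\ge\max\{M^{\alpha/2},\,C_1(1+M^{\alpha/2})+M^{\alpha/2}\}$ does not match the stated $\max\{M^{\alpha/2},\,c_\alpha M^{\alpha/2}+c_\alpha M^{\alpha}\}$ on the nose (the proposition leaves $c_\alpha$ unspecified, and in the next proof it is tied to $C_1$ via $C_1=c_\alpha M^{\alpha/2}$, which makes the two expressions agree up to your extra $+M^{\alpha/2}$); this is pure bookkeeping, not a gap.
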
 
\begin{proof}
We closely follow the argument in \cite[Proof of Lemma 2]{Wasilkowski_Wozniakowski_1995Explicitbound}.
The same reasoning applies if %
\rev{bounded linear functionals on $H_{\rho}^{\alpha} (\mathbb{R}^s)$ with $\rho(\bsx)=\frac{\mathrm{e}^{-|\boldsymbol{x}|_2^{2}/2}}{(2\pi)^{s/2}}$ of the form $T_1\otimes\dotsb\otimes T_s$, 
where each $T_j\colon H_{\rho}^{\alpha}(\mathbb{R})\to\mathbb{R}$ is a bounded linear funcitonal on $H_{\rho}^{\alpha}(\mathbb{R})$ with $\rho(x)=\frac{\mathrm{e}^{-|x|^{2}/2}}{(2\pi)^{1/2}}$, 
satisfies $\|T_1\otimes\dotsb \otimes T_s\|_{H_{\rho}^{\alpha}(\mathbb{R}^{s})\to\mathbb{R}}=\prod_{j=1}^s\|T_j\|_{H_{\rho}^{\alpha}(\mathbb{R})\to\mathbb{R}}$ for $s=1,\dots,d$.} 
\rev{Since such an equality holds as per \cref{subsec:tensorised-operators}},
the result follows. 
\end{proof}

Then, the final piece for deriving the numerical integration error
is an error analysis in the space that gives $H_{\rho}^{\alpha}$
upon taking the tensor product. Such result is obtained in our previous
paper \cite{Kazashi.Y_Suzuki_Goda_2023_SuboptimalityGaussHermite}.

\begin{proposition} Suppose that the Gauss--Hermite rule $Q_{\ell}^{\mathrm{uni}}$
for univariate functions uses $n_{\ell}$ quadrature points for $\ell\geq1$.
Suppose further that for some $M_{0}>0$ and $M>1$ we choose $n_{\ell}$
as 
\[
M^{\ell-1}\leq n_{\ell}\leq M_{0}(M^{\ell}-1).
\]
Let $N$ be the resulting quadrature points used by $S_{\Lambda_{L}}$
with $L\geq d+1$. Then, we have 
\[
|I(f)-S_{\Lambda_{L}}(f)|\leq c_{\alpha,d}\|f\|_{H_{\rho}^{\alpha}}(\ln N)^{(d-1)(1+\frac{\alpha}{2})}N^{-\alpha/2},
\]
where the constant $c_{\alpha,d}$ is independent of $f$. \end{proposition}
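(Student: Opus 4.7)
The plan is a three-ingredient chain: (i) a univariate Gauss--Hermite error estimate in the form required by \Cref{prop:SmolyakWasWos}, (ii) the Smolyak lifting result \Cref{prop:SmolyakWasWos} itself, and (iii) the conversion $N \asymp_d L^{d-1} M^L$ between the Smolyak level $L$ and the total node count $N$, already proved inside \Cref{thm:lower-isotropic-Smolyak}.

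For the first ingredient, the univariate Gauss--Hermite analysis in \cite{Kazashi.Y_Suzuki_Goda_2023_SuboptimalityGaussHermite} yields, for the $n_\ell$-point rule applied to $g\in H_\rho^\alpha(\mathbb{R})$,
\[
\sup_{0\neq g\in H_\rho^\alpha(\mathbb{R})} \frac{\bigl|\int_{\mathbb{R}} g(x)\rho(x)\,\mathrm{d}x - Q_{n_\ell}^{\mathrm{uni}}(g)\bigr|}{\|g\|_{H_\rho^\alpha(\mathbb{R})}} \lesssim_\alpha n_\ell^{-\alpha/2}.
\]
Using $n_\ell \geq M^{\ell-1}$ for $\ell \geq 1$, together with the trivial bound $|I(g)| \leq \|g\|_{H_\rho^\alpha}$ for $\ell = 0$ (recall $Q_0^{\mathrm{uni}} = 0$), this fits the template $C_1 M^{-\alpha\ell/2}$ with $C_1$ depending only on $\alpha$ and $M$. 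Applying \Cref{prop:SmolyakWasWos} then gives
\[
e(S_{\Lambda_L}) \leq C_2\, M^{-\alpha L/2} \binom{L}{d-1},
\]
with $C_2 = C_2(\alpha, d, M, M_0)$.

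It remains to re-express the right-hand side in terms of $N$. From \eqref{eq:isotropic-N-bound} we have $N \asymp_d L^{d-1} M^L$. The upper bound $N \lesssim_d L^{d-1} M^L$ gives $M^{-\alpha L/2} \lesssim_d L^{(d-1)\alpha/2} N^{-\alpha/2}$, while the lower bound $N \gtrsim_d M^L$ gives $L \lesssim_d \ln N$ (the hypothesis $L \geq d+1$ ensures $\ln N$ is bounded away from zero). Combining these with $\binom{L}{d-1} \leq L^{d-1}$ yields
\[
M^{-\alpha L/2} \binom{L}{d-1} \lesssim_d L^{(d-1)(1+\alpha/2)} N^{-\alpha/2} \lesssim_d (\ln N)^{(d-1)(1+\alpha/2)} N^{-\alpha/2},
\]
as claimed. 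I do not anticipate any serious obstacle: the univariate rate is quoted from the authors' prior work and the Smolyak machinery is packaged inside \Cref{prop:SmolyakWasWos}. The only points requiring a little care are (a) checking that the univariate constant is uniform in $\ell$ including the boundary case $\ell = 0$, and (b) translating the $L$-to-$N$ relation in the correct direction, so that the factor $L^{(d-1)\alpha/2}$ extracted from $M^{-\alpha L/2}$ combines with $\binom{L}{d-1}$ to produce the exponent $(d-1)(1+\alpha/2)$ on the logarithm.
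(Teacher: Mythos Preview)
Your proposal is correct and follows essentially the same route as the paper: quote the univariate rate from \cite{Kazashi.Y_Suzuki_Goda_2023_SuboptimalityGaussHermite}, apply \Cref{prop:SmolyakWasWos}, and convert $L$ to $N$ via the two-sided estimate $N \asymp_d L^{d-1} M^L$. The only cosmetic difference is that the paper re-derives the $N$--$L$ relation in place (citing \cite{Wasilkowski_Wozniakowski_1995Explicitbound} for the upper bound on $N$) rather than pointing back to \eqref{eq:isotropic-N-bound}, but the content is identical.
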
 
\begin{proof}
From \cite[Proposition 3.4]{Kazashi.Y_Suzuki_Goda_2023_SuboptimalityGaussHermite},
we know that the worst-case error in the Sobolev space for univariate
functions is given by 
\[
\sup_{g\neq0}\frac{\big|\int_{\mathbb{R}}g(x)\rho(x)\mathrm{d}x-Q_{n_{\ell}}^{\mathrm{uni}}(g)\big|}{\|g\|_{H_{\rho}^{\alpha}}}\leq c_{\alpha}n_{\ell}^{-\alpha/2}\leq C_{1}\,M^{-\alpha\ell/2}
\]
with\rev{ a constant $c_{\alpha}$ depending on $\alpha$, and} $C_{1}:=c_{\alpha}M^{\alpha/2}$. This bound enables us to invoke
\cref{prop:SmolyakWasWos}, yielding 
\[
e(S_{\Lambda_{L}}(f))\leq C_{2}\,M^{-\alpha L/2}\binom{L}{d-1}
\]
with $C_{2}:=C_{1}\Big(\max\Big\{ M^{\alpha/2},c_{\alpha}M^{\alpha/2}+c_{\alpha}M^{\alpha}\Big\}\Big)^{d-1}$.

To express the right-hand side above in terms of $N$, we obtain some
bounds for powers of $M$. The lower bound $M^{L-d}\leq n_{L-d+1}\leq N$
can be obtained by considering the contribution for the quadrature
corresponding to the index $(\ell_{1},\dots,\ell_{d})$ with $\ell_{1}=L-d+1$
and $\ell_{2}=\dotsb=\ell_{d}=1$. On the other hand, following the
calculations in \cite[Section 5]{Wasilkowski_Wozniakowski_1995Explicitbound}
we get $N\leq M_{0}^{d}\frac{M}{M-1}M^{L}\binom{L-1}{d-1}$. From
$\binom{L-1}{d-1}=\big(1-\frac{d-1}{L}\big)\binom{L}{d-1}\leq\binom{L}{d-1}$
\[
M^{-L}\leq N^{-1}M_{0}^{d}\frac{M}{M-1}\binom{L}{d-1}.
\]
Hence, together with $\binom{L}{d-1}\leq\frac{L^{d-1}}{(d-1)!}$ we
have 
\begin{align*}
e(S_{\Lambda}(f)) & \leq C_{2}\,M^{-\alpha L/2}\binom{L}{d-1} \leq C_{2}\,N^{-\alpha/2}M_{0}^{d\alpha/2}\left(\frac{M}{M-1}\right)^{\alpha/2}\binom{L}{d-1}^{1+\alpha/2}\\
 & \leq\frac{C_{2}}{((d-1)!)^{1+\alpha/2}}\left(\frac{M_{0}^{d}M}{M-1}\right)^{\alpha/2}N^{-\alpha/2}L^{(d-1)(1+\alpha/2)}.
\end{align*}
To bound the factor of $L$ in terms of $\ln N$, note that $2\leq L-d+1$
implies $M\leq n_{2}\leq N$; combined with $M^{L-d}\leq N$, this
gives $M^L\le M^dN\le N^{1+d}$ and thus  $L\leq
	{(}\ln N {)}(1+d)/\ln M$. Therefore, we obtain
\[
e(S_{\Lambda_{L}}(f))\leq\frac{C_{2}((1+d)/\ln M)^{(d-1)(1+\alpha/2)}}{((d-1)!)^{1+\alpha/2}}\left(\frac{M_{0}^{d}M}{M-1}\right)^{\alpha/2}\,N^{-\alpha/2}(\ln N)^{(d-1)(1+\alpha/2)}.
\]
Hence, the statement follows with $c_{\alpha,d}:=\frac{C_{2}((1+d)/\ln M)^{(d-1)(1+\alpha/2)}}{((d-1)!)^{1+\alpha/2}}\left(\frac{M_{0}^{d}M}{M-1}\right)^{\alpha/2}$. 
\end{proof}

\section{Quasi-Monte Carlo methods}

\label{sec:QMC} 
A quasi-Monte Carlo method is an equal-weight integration
rule. It approximates the integral of a function $g$ by
\[
\frac{1}{N}\sum_{j=1}^{N}g(\boldsymbol{t}^{(j)})\approx\int_{[0,1]^{d}}g(\boldsymbol{x})\mathrm{d}\boldsymbol{x}
\]
with nodes $\boldsymbol{t}^{(j)}\in[0,1]^{d}$, $j=1,\dots,N$. 
Popular classes for the choice
of $\boldsymbol{t}_{j}$ include lattice points \rev{\cite{Niederreiter.H_1992_book,Sloan.I.H_Joe_1994_book,Dick.J_Kritzer_Pillichshammer_2022_Book_LatticeRules}},
\rev{(}higher-order\rev{)} digital nets \rev{\cite{Niederreiter.H_1992_book,Dick.J_Pillichshammer_2010_book}},
and low discrepancy points/sequences \rev{(such as Halton and Kronecker sequences)} \cite[Chapter~3]{Niederreiter.H_1992_book}.
These classes are not mutually exclusive:  lattice points and 
digital nets may also have low discrepancy; 
see, for instance, \cite[Chapter~5]{Dick.J_Kritzer_Pillichshammer_2022_Book_LatticeRules} and \cite[Chapter~5]{Dick.J_Pillichshammer_2010_book}.

An important point to mention is that if one views $\{\boldsymbol{t}_{j}\}$
as a low-discrepancy point set and uses the Koksma--Hlawka inequality
as a framework to bound the QMC error, then the best achievable rate
is linear convergence. This is because the Koksma--Hlawka inequality
is based on first-order derivatives. To achieve higher-order convergence,
one needs a framework that exploits higher-order derivatives.

QMC points are typically defined on the unit cube $[0,1]^{d}$. To
integrate functions over $\mathbb{R}^{d}$, we therefore apply domain truncation and a change
of variables: 
\begin{align}
\int_{\mathbb{R}^{d}}f(\boldsymbol{x})\,\rho(\boldsymbol{x})\,\mathrm{d}\boldsymbol{x} & =
\int_{\Omega}f(\boldsymbol{x})\,\rho(\boldsymbol{x})\,\mathrm{d}\boldsymbol{x}+\int_{\mathbb{R}^{d}\setminus \Omega}f(\boldsymbol{x})\,\rho(\boldsymbol{x})\,\mathrm{d}\boldsymbol{x}
\label{eq:QMC-domain-decomp}
\end{align}
and
\begin{align}
\int_{\Omega}f(\boldsymbol{x})\,\rho(\boldsymbol{x})\,\mathrm{d}\boldsymbol{x}&=
\int_{\rev{(}0,1\rev{)}^{d}}f(\boldsymbol{\Psi}(\boldsymbol{t}))\,\rho(\boldsymbol{\Psi}(\boldsymbol{t}))\,|\det(D\boldsymbol{\Psi}(\boldsymbol{t}))|\,\mathrm{d}\boldsymbol{t}\nonumber\\
 & \approx\frac{1}{N}\sum_{j=1}^{N}f\bigl(\boldsymbol{\Psi}(\boldsymbol{t}^{(j)})\bigr)\rho\bigl(\boldsymbol{\Psi}(\boldsymbol{t}^{(j)})\bigr)|\det(D\boldsymbol{\Psi}(\boldsymbol{t}^{(j)}))|=:Q_{N}(f).\label{eq:QMC-CV}
\end{align}
Here, $\Omega\subseteq\mathbb{R}^d$ is a suitable integration domain, possibly $\Omega=\mathbb{R}^d$\rev{, and $\boldsymbol{\Psi}:(0,1)^d\to \Omega$ is a suitable mapping.}  
Often the mapping is component-wise, $\boldsymbol{\Psi}(\boldsymbol{t})\!\!\;=\!\!\;(\Psi_{1}(t_{1}),\dots,\Psi_{d}(t_{d}))$,
and non-negative, in which case the Jacobian factor simplifies as
\[
|\det(D\boldsymbol{\Psi}(\boldsymbol{t}))|=\prod_{k=1}^{d}\Psi_{k}'(t_{k}).
\]
In this paper, we consider two choices for $\Psi$: an affine map
and a Möbius transformation\rev{, namely the cotangent transform}. 
For the former, $\Omega$ is a suitable box; for the latter, $\Omega$ is the whole $\mathbb{R}^d$.
We will show that both of them map $f\in H_{\rho}^{\alpha}$
to Sobolev spaces over the unit cube for which QMC methods are known
to work well.

As $\{\boldsymbol{t}^{(1)},\ldots,\boldsymbol{t}^{(N)}\}$ we consider
\emph{good rank-$1$ lattices} and \emph{digital nets}. A rank-$1$
lattice is defined by 
\[
\boldsymbol{t}^{(j)}:=\bigg\{\frac{\boldsymbol{z}j}{N}\bigg\}\;\;\text{ for }j=1,\ldots,N,
\]
where $\{\cdot\}$ expresses the fractional part of each component, and the integer
vector $\boldsymbol{z}\in\mathbb{N}^{d}$ is called a generating vector.
A good generating vector can be found efficiently using a greedy algorithm
known as the component-by-component (CBC) construction, which provably
yields a vector with a satisfactorily small worst-case error in various
function spaces \cite{Dick.J_Kritzer_Pillichshammer_2022_Book_LatticeRules}.

A digital net is defined as follows. For a fixed prime $p$ and a
positive integer $m$, we denote the $p$-adic expansion of a non-negative
integer $\rev{j}<p^{m}$ and a real number $x\in[0,1)$ by 
\[
\rev{j=\iota_{0}+\iota_{1}p+\cdots + \iota_{m-1}p^{m-1}}\qquad  \text{and} \qquad 
x=\frac{\xi_{1}}{p}+\frac{\xi_{2}}{p^{2}}+\cdots,
\]
respectively. \rev{Here, the digits $\iota_k$ and $\xi_k$ are in $\ZZ_p$, the set $\{0,1,\dots,p-1\}$ equipped with the usual arithmetic
operations modulo $p$.} We then write $\vec{j}=(\iota_{0},\iota_{1},\ldots,\iota_{m-1})^{\top}\in\ZZ_{p}^{m}$
and $\vec{x}=(\xi_{1},\xi_{2},\ldots)^{\top}\in\ZZ_{p}^{\NN}$. 
If $x$ admits two different $p$-adic expansions, we choose the one
in which infinitely many of the digits $\xi_{k}$ differ from $p-1$.

Now, for suitably designed generating matrices $C_{1},\ldots,C_{d}\in\ZZ_{p}^{\NN\times m}$, \rev{
consider the corresponding digit vectors $\vec{t}_{j,i}\in \mathbb{Z}^\mathbb{N}_p$ obtained by the matrix-vector product modulo $p$:
\[
\vec{t}_{j,i}=C_{i}\,\vec{j}\;\;\text{ for }i=1,\ldots,d\text{ and }j=0,1,\ldots,N-1.
\]
Then, the digital net consisting of $N=p^{m}$ points corresponding to $C_1,\dots,C_d$ is defined as the set $\{\bst^{(0)},\ldots,\bst^{(N-1)}\}$, where each component $t_{j,i}$ of the point $\bst^{(j)}=(t_{j,1},\ldots,t_{j,d})\in [0,1]^d$ is determined by the corresponding digit vector $\vec{t}_{j,i}$. 
}%
Note that if every column of the generating matrices has only finitely
many non-zero entries, this matrix-vector product is well-defined.
The classical (first-order) digital nets constructed by Sobol’ \cite{Sobol1967},
Niederreiter \cite{Niederreiter1988}, and Niederreiter--Xing \cite{NX96} are included
as special cases of this construction. In \cite{Dick2008}, Dick introduced
the notion of \emph{higher-order digital nets}, which also form a
special class within this framework. An order-$\alpha$ digital net
with $\alpha\in\NN$, $\alpha\ge2$, is constructed using generating
matrices $C_{1},\ldots,C_{d}$, which are themselves derived from
the generating matrices $D_{1},\ldots,D_{\alpha d}$ for first-order
digital nets in dimension $\alpha d$.
We refer the reader to \cite{GS2020} for a review of higher-order nets.

It is worth noting that QMC methods presented in this section always have positive weights, i.e., when seeing the quadrature \eqref{eq:QMC-CV}  as $Q_N(f)=\sum_{j=1}^N w_j f(\bsx_j) $, weights $w_j$'s are all positive. 
As such, nonnegative integrands yield nonnegative approximated integral values. This nonnegativity-preserving property is important in situations where the quantity of interest must be nonnegative, for example probabilities. 
Positive quadrature weights are also useful when discretizing optimization problems involving expectations \cite{Martin.M_Nobile_2021_PDEConstrainedOptimalControl} with convex objective functions, since they preserve convexity. 
By contrast, other quadrature methods may involve negative weights and thus lose these desirable features.

\rev{Before going into details, we first summarize our main results of this section in the following theorem.

\begin{theorem}\label{thm:qmc-summary}The quasi-Monte Carlo rule $Q_{N}$ \eqref{eq:QMC-CV}
achieves the error bound 
\begin{align*}
\sup_{f\neq0}\frac{|I(f)-Q_{N}(f)|}{\|f\|_{H_{\rho}^{\alpha}}} & \leq C_{d,\alpha}\frac{(\ln N)^{\gamma}}{N^{\alpha}},
\end{align*}
where the exponent $\gamma$ depends on the QMC construction: 
\[
\gamma=\begin{cases}
\frac{3\alpha d}{2}+\frac{d}{4} & \text{(Rank-1 lattice rule with affine map)}\\
\frac{\alpha d}{2}+\frac{3d}{4}-\frac{1}{2} & \text{(Higher order digital net with affine map)}\\
d\alpha & \text{(Rank-1 lattice rule with cotangent transform)}\\
(d-1)/2 & \text{(Higher order digital net with cotangent transform)}
\end{cases}.
\]
The polynomial convergence rate $\alpha$ is optimal in $H_{\rho}^{\alpha}$:
no algorithm can achieve a better polynomial rate. Furthermore, the
exponent $(d-1)/2$ in the logarithmic factor is also optimal. \end{theorem}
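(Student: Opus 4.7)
The statement is essentially a consolidation of the four propositions established earlier in Section~\ref{sec:QMC}, matched against the lower bound from Theorem~\ref{thm:lower-all-algorithm}. So my plan is to simply assemble these pieces rather than introduce any new estimates.

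First, I would dispose of the four upper bounds case by case. For the rank-$1$ lattice with affine map, the bound with exponent $\gamma = \frac{3\alpha d}{2} + \frac{d}{4}$ is exactly Proposition~\ref{prop:lattice-affin}. For the higher-order digital net with affine map, the bound with $\gamma = \frac{\alpha d}{2} + \frac{3d}{4} - \frac{1}{2}$ is the Affinely-mapped digital net proposition cited from~\cite[Corollary~1]{Dick.J_Irrgeher_Leobacher_Pillichshammer_2018_SINUM_Hermite}. For the cotangent-transformed rank-$1$ lattice, the bound with $\gamma = d\alpha$ follows from the Lattice with Möbius transformation proposition, and for the cotangent-transformed higher-order digital net, the bound with $\gamma = (d-1)/2$ follows from the Digital net with Möbius transformation proposition. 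In each case there is nothing to prove beyond citing the relevant result.

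Second, I would handle the two optimality claims. Optimality of the polynomial rate $\alpha$ is immediate from Theorem~\ref{thm:lower-all-algorithm}: any algorithm using $N$ function values has worst-case error at least $c_{\alpha,d}(\ln N)^{(d-1)/2}N^{-\alpha}$, so no algorithm can achieve a polynomial rate better than $N^{-\alpha}$. For the logarithmic exponent, the same lower bound $c_{\alpha,d}(\ln N)^{(d-1)/2}N^{-\alpha}$ shows that the exponent $(d-1)/2$ achieved by the cotangent-transformed higher-order digital net cannot be improved; any smaller exponent would contradict Theorem~\ref{thm:lower-all-algorithm}.

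Since every ingredient is already proved or quoted in the preceding subsections, there is no real obstacle; the proof is purely assembly. The only minor care needed is to note explicitly that the lower bound in Theorem~\ref{thm:lower-all-algorithm} applies to arbitrary (possibly nonlinear, adaptive) algorithms using $N$ point evaluations, so the four equal-weight QMC rules considered here are in particular covered, making both optimality statements rigorous.
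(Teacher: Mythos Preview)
Your proposal is correct and matches the paper's approach exactly: the paper presents this theorem as a summary without a separate proof, since it is just the conjunction of the four preceding propositions together with the lower bound of Theorem~\ref{thm:lower-all-algorithm}. Your assembly of these ingredients, including the remark that the lower bound covers arbitrary algorithms and hence the QMC rules in question, is precisely what is needed.
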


The bound for higher-order digital nets with an affine map is due to Dick et al.~\cite{Dick.J_Irrgeher_Leobacher_Pillichshammer_2018_SINUM_Hermite}. 
In the following subsections, we present detailed algorithms and establish the remaining upper bounds. The optimality statements then follow from the lower bound in \cref{thm:lower-all-algorithm}.
}
\subsection{Affinely-mapped QMC}

For $\boldsymbol{b}=(b,\ldots,b)\in(0,\infty)^{d}$, define the isotropic
affine transformation $\boldsymbol{\Psi}_{\mathrm{affine},b}:[0,1]^{d}\to\prod_{k=1}^{d}[-b,b]\eqqcolon[-\bsb,\bsb]\subset\mathbb{R}^{d}$
by 
\[
\boldsymbol{\Psi}_{\mathrm{affine}}:=\boldsymbol{\Psi}_{\mathrm{affine},b}(\boldsymbol{t})=2b\boldsymbol{t}-\boldsymbol{b}.
\]
Choosing $[-\boldsymbol{b},\boldsymbol{b}]$ as $\Omega$ in \eqref{eq:QMC-domain-decomp} and using $\boldsymbol{\Psi}_{\mathrm{affine}}$ in \eqref{eq:QMC-CV}
yields
\begin{equation}
Q_{N}(f)=\frac{(2b)^{d}}{N}\sum_{j=1}^{N}f\bigl(\boldsymbol{\Psi}_{\mathrm{affine}}(\boldsymbol{t}^{(j)})\bigr)\rho\bigl(\boldsymbol{\Psi}_{\mathrm{affine}}(\boldsymbol{t}^{(j)})\bigr),\quad f\in H_{\rho}^{\alpha}.\label{eq:affine-QMC}
\end{equation}
The value $b$ controls the truncation error, namely the integral of $f\rho$ over $\mathbb{R}^d\setminus [-\boldsymbol{b},\boldsymbol{b}]$, whereas $Q_N$ controls the numerical integration error. 
With a suitable choice of $b$ and point set $\{\boldsymbol{t}_{1},\ldots,\boldsymbol{t}_{N}\}$,
QMC methods attain the optimal convergence rate up to logarithmic
factors, with the dominant part $N^{-\alpha}$ being optimal.
\begin{proposition}[Affinely-mapped lattice]\label{prop:lattice-affin}
Let $\alpha\in\NN$, and let $Q_{N}$ be the rank-$1$ lattice rule
defined in \eqref{eq:affine-QMC} with $b=\rev{(2+\eta)}\sqrt{\alpha\ln N}$\rev{, where $\eta>0$ is arbitrary}. 
Then, with a generating vector $\boldsymbol{z}$ obtained by the CBC
construction in \cite[Algorithm~3.14]{Dick.J_Kritzer_Pillichshammer_2022_Book_LatticeRules},
$Q_{N}$ satisfies 
\begin{equation}
\sup_{f\neq0}\frac{|I(f)-Q_{N}(f)|}{\|f\|_{H_{\rho}^{\alpha}}}\leq C_{\alpha,d,\rev{\eta}}\;\!N^{-\alpha}(\ln N)^{\frac{3\alpha d}{2}+\frac{d}{4}}.\label{eq:aff-lattice-error}
\end{equation}

\end{proposition}
\begin{proof}
First, we will show that $f\in H_{\rho}^{\alpha}$ implies $\|f\rho\|_{\text{unanchored}}+\|f\rho\|_{\text{decay}\rev{, \varepsilon}}\leq C_{d,\alpha,\rev{\varepsilon}}\|f\|_{H_{\rho}^{\alpha}(\mathbb{R}^{d})}$ \rev{for every  $\varepsilon\in(0,1/2)$ with} 
 some constant $C_{d,\alpha\rev{,\varepsilon}}>0$, where 
\[
\|f\rho\|_{\text{decay}\rev{, \varepsilon}}\coloneqq\sup_{\substack{\bsx\in\R^{d}\\
\mathbf{r}\in\{0,\ldots,\alpha-1\}^{d}
}
}\left|\rho^{-\varepsilon}(\bsx)\,D^{\mathbf{r}}(f\rho)(\bsx)\right|<\infty, 
\]
and 
\[
\|f\rho\|_{\text{unanchored}}\coloneqq\sup_{[\bsa,\bsb]\subset\R^{d}}\|f\rho\|_{\mathscr{H}^{\alpha}([\bsa,\bsb])}<\infty
\]
with 
\begin{align}
\langle f,h\rangle_{\mathscr{H}^{\alpha}([\boldsymbol{a},\boldsymbol{b}])} & := \sum_{\substack{\mathbf{r}\in\{0,\dots,\alpha\}^{d}\\
\mathfrak{v}:=\{k:r_{k}=\alpha\}
}
}\int_{[\boldsymbol{a}_{\mathfrak{v}},\boldsymbol{b}_{\mathfrak{v}}]}\left(\int_{[\boldsymbol{a}_{-\mathfrak{v}},\boldsymbol{b}_{-\mathfrak{v}}]}D^{\mathbf{r}}f(\boldsymbol{x})\,\mathrm{d}\boldsymbol{x}_{-\mathfrak{v}}\right) \label{eq:def-unanchored-alpha-ab} \\
& \qquad \qquad \qquad \qquad \times \left(\int_{[\boldsymbol{a}_{-\mathfrak{v}},\boldsymbol{b}_{-\mathfrak{v}}]}D^{\mathbf{r}}h(\boldsymbol{x})\,\mathrm{d}\boldsymbol{x}_{-\mathfrak{v}}\right)\mathrm{d}\boldsymbol{x}_{\mathfrak{v}} \notag
\end{align}
for bounded $[\bsa,\bsb]$. 
\rev{The condition $\|f\rho\|_{\text{decay}, \varepsilon}<\infty$ corresponds to the exponential decay condition \cite[Equation (22)]{Nuyens.D_Suzuki_2022_ScaledLatticeRd},  whereas $\|f\rho\|_{\text{unanchored}}<\infty$ corresponds to the requirement that \cite[Equation (24)]{Nuyens.D_Suzuki_2022_ScaledLatticeRd} be  finite.} 
In the proof we use the following identities
\begin{align*}
D^{\mathbf{r}}(H_{\mathbf{k}}\,\rho) & =(-1)^{|\mathbf{r}|}\sqrt{\frac{(\mathbf{k}+\mathbf{r})!}{\mathbf{k}!}}H_{\mathbf{k}+\mathbf{r}}\,\rho,\ \text{ and }\ D^{\mathbf{r}}\rho=(-1)^{|\mathbf{r}|}\sqrt{\mathbf{r}!}\,H_{\mathbf{r}}\,\rho,
\end{align*}
which can be derived from the definition of Hermite polynomial.

For $g=f\rho$ and $\mathbf{r}\in\mathbb{N}_{0}^{d}$ with $|\mathbf{r}|_{\infty}\leq\alpha$,
the product rule yields 
\begin{align}
D^{\mathbf{r}}g & =\sum_{\mathbf{s}\le\mathbf{r}}\binom{\mathbf{r}}{\mathbf{s}}(D^{\mathbf{s}}\rho)\,(D^{\mathbf{r}-\mathbf{s}}f)=\sum_{\mathbf{s}\le\mathbf{r}}(-1)^{|\mathbf{s}|}\sqrt{\mathbf{s}!}\binom{\mathbf{r}}{\mathbf{s}}\,H_{\mathbf{s}}\,\rho\,D^{\mathbf{r}-\mathbf{s}}f.\label{eq:Leibniz}
\end{align}

Now, 
\begin{equation}
\sup_{[\boldsymbol{a},\boldsymbol{b}]\subset\mathbb{R}^{d}}\|g\|_{\mathscr{H}^{\alpha}([\boldsymbol{a},\boldsymbol{b}])}^{2}\leq\sum_{\substack{\mathbf{r}\in\{0,\ldots,\alpha\}^{d}\\
\mathfrak{v}:=\left\{ j:r_{j}=\alpha\right\} 
}
}\int_{\mathbb{R}^{|\mathfrak{v}|}}\left(\int_{\mathbb{R}^{d-|\mathfrak{\mathfrak{v}}|}}|D^{\mathbf{r}}g(\boldsymbol{x})|\,\mathrm{d}\boldsymbol{x}_{-\mathfrak{v}}\right)^{2}\mathrm{d}\boldsymbol{x}_{\mathfrak{v}}.\label{eq:ab-bound-1}
\end{equation}
Using the Cauchy--Schwarz inequality, the inner-integral can be bounded
as 
\begin{align*}
&\int_{\mathbb{R}^{d-|\mathfrak{\mathfrak{v}}|}}  |D^{\mathbf{r}}g(\boldsymbol{x})|\,\mathrm{d}\boldsymbol{x}_{-\mathfrak{v}}
\leq
\sum_{\mathbf{s}\le\mathbf{r}}\sqrt{\mathbf{s}!}\binom{\mathbf{r}}{\mathbf{s}}\int_{\mathbb{R}^{d-|\mathfrak{\mathfrak{v}}|}}|H_{\mathbf{s}}(\boldsymbol{x})D^{\mathbf{r}-\mathbf{s}}f(\boldsymbol{x})|\,\rho(\boldsymbol{x})\mathrm{d}\boldsymbol{x}_{-\mathfrak{v}}\\
&\ \ \kern-3mm \leq\sum_{\mathbf{s}\le\mathbf{r}}\sqrt{\mathbf{s}!}\binom{\mathbf{r}}{\mathbf{s}}|H_{\boldsymbol{s}(\mathfrak{v})}(\boldsymbol{x}_{\mathfrak{v}})|\kern1.5pt
	\rho_{\mathfrak{\mathfrak{v}}}(\boldsymbol{x}_{\mathfrak{v}})
	\kern1pt
{\|H_{\boldsymbol{s}(-\mathfrak{v})}\|_{L^{2}_{\rho_{\mathfrak{-\mathfrak{v}}}}} \kern-1mm }\left(\int_{\mathbb{R}^{d-|\mathfrak{\mathfrak{v}}|}}\kern-2mm |D^{\mathbf{r}-\mathbf{s}}f(\boldsymbol{x})|^{2}\,\rho_{\mathfrak{-\mathfrak{v}}}(\boldsymbol{x}_{-\mathfrak{v}})\mathrm{d}\boldsymbol{x}_{-\mathfrak{v}}\right)^{\kern-1mm 1/2}
\end{align*}
with %
$\|H_{\boldsymbol{s}(-\mathfrak{v})}\|_{L^{2}_{\rho_{\mathfrak{-\mathfrak{v}}}}}:=\left(\int_{\mathbb{R}^{d-|\mathfrak{\mathfrak{v}}|}}|H_{\boldsymbol{s}(-\mathfrak{v})}(\boldsymbol{x}_{-\mathfrak{v}})|^{2}\,\rho_{\mathfrak{-\mathfrak{v}}}(\boldsymbol{x}_{-\mathfrak{v}})\mathrm{d}\boldsymbol{x}_{-\mathfrak{v}}\right)^{1/2}$.
Here, we used the notations $H_{\boldsymbol{s}(\mathfrak{v})}(\rev{\boldsymbol{x}}_{\mathfrak{v}}):=\prod_{j\in\mathfrak{v}}H_{s_{j}}(x_{j})$,
$H_{\boldsymbol{s}(-\mathfrak{v})}(\rev{\boldsymbol{x}}_{-\mathfrak{v}}):=\prod_{j\in\{1,\dots,d\}\setminus\mathfrak{v}}H_{s_{j}}(x_{j})$,
and similarly $\rho_{\mathfrak{v}}\rev{(\boldsymbol{x_{\mathfrak{v}}})}:=\prod_{j\in\mathfrak{v}}\rho(x_{j})$
and $\rho_{-\mathfrak{v}} \rev{(\boldsymbol{x_{-\mathfrak{v}}})}:=\prod_{j\in\{1,\dots,d\}\setminus\mathfrak{v}}\rho(x_{j})$.
Noting $\|H_{\boldsymbol{s}(-\mathfrak{v})}\|_{L^{2}_{\rho_{\mathfrak{-\mathfrak{v}}}}}=1$, it
follows 
\begin{align*}
\int_{\mathbb{R}^{|\mathfrak{\mathfrak{v}}|}} & \left(\int_{\mathbb{R}^{d-|\mathfrak{\mathfrak{v}}|}}|D^{\mathbf{r}}g(\boldsymbol{x})|\,\mathrm{d}\boldsymbol{x}_{-\mathfrak{v}}\right)^{2}\mathrm{d}\boldsymbol{x}_{\mathfrak{v}}\\
 & \leq C_{d,\rev{\alpha}}\sum_{\mathbf{s}\le\mathbf{r}}\mathbf{s}!\binom{\mathbf{r}}{\mathbf{s}}^{2}\int_{\mathbb{R}^{d}}|H_{\boldsymbol{s}(\mathfrak{v})}(\boldsymbol{x}_{\mathfrak{v}})|^{2}\rho_{\mathfrak{\mathfrak{v}}}^{2}(\boldsymbol{x}_{\mathfrak{v}})\,|D^{\mathbf{r}-\mathbf{s}}f(\boldsymbol{x})|^{2}\,\rho_{\mathfrak{-\mathfrak{v}}}(\boldsymbol{x}_{-\mathfrak{v}})\mathrm{d}\boldsymbol{x}\\
 & \leq C_{d,\rev{\alpha}}\sum_{\mathbf{s}\le\mathbf{r}}\mathbf{s}!\binom{\mathbf{r}}{\mathbf{s}}^{2}\sup_{\boldsymbol{y}_{\mathfrak{v}}\in\mathbb{R}^{|\mathfrak{v}|}}|H_{\boldsymbol{s}(\mathfrak{v})}(\boldsymbol{y}_{\mathfrak{v}})|^{2}\rho_{\mathfrak{\mathfrak{v}}}(\boldsymbol{y}_{\mathfrak{v}})\int_{\mathbb{R}^{d}}|D^{\mathbf{r}-\mathbf{s}}f(\boldsymbol{x})|^{2}\,\rho(\boldsymbol{x})\mathrm{d}\boldsymbol{x}.
\end{align*}
Therefore, from \eqref{eq:ab-bound-1} 
\[
\rev{\|f\rho\|_{\text{unanchored}}=}\sup_{[\boldsymbol{a},\boldsymbol{b}]\subset\mathbb{R}^{d}}\|g\|_{\mathscr{H}^{\alpha}([\boldsymbol{a},\boldsymbol{b}])}\leq C_{d,\alpha}\|f\|_{H_{\rho}^{\alpha}(\mathbb{R}^{d})}
\]
holds for some constant $C_{d,\alpha}>0$.

To show $\lVert g\rVert_{\text{decay}\rev{, \varepsilon}}\leq C\|f\|_{H_{\rho}^{\alpha}(\mathbb{R}^{d})}$,
we will invoke the Sobolev embedding in \cref{prop:1st-order-embedding}
to bound the supremum norm. Define $h_{\mathbf{r}}:=\rho^{-\varepsilon}D^{\mathbf{r}}g=\rho^{-\varepsilon}D^{\mathbf{r}}(f\rho)$
for $\mathbf{r}\in\{0,\dots,\alpha-1\}^{d}$, for which we will derive
a bound on its Sobolev norm. For $\mathbf{k}\in\{0,1\}^{d}$ 
\[
D^{\mathbf{k}}h_{\mathbf{r}}(\boldsymbol{x})=
\sum_{\mathbf{j}\leq\mathbf{k}}\binom{\mathbf{k}}{\mathbf{j}}
\biggl(
	\prod_{j\in\mathbf{j}
}{\varepsilon\, x_{j}}\biggr)\rho^{-\varepsilon}(\boldsymbol{x})(D^{\mathbf{k}-\mathbf{j}+\mathbf{r}}g(\boldsymbol{x}))
\]
holds. With 
$p_{\mathbf{j}}(\boldsymbol{x}):=\prod_{j\in\mathbf{j}}\varepsilon\,x_{j}$, 
taking the $L^{2}(\mathbb{R}^{d})$-norm of both sides together
with triangular inequality yields 
\begin{align}
 \| D^{\mathbf{k}}h_{\mathbf{r}}\|_{L^{2}(\mathbb{R}^{d})} & \leq\sum_{\mathbf{j}\leq\mathbf{k}}\binom{\mathbf{k}}{\mathbf{j}}\sum_{\mathbf{s}\le\mathbf{r}+\mathbf{k}-\mathbf{j}}\sqrt{\mathbf{s}!}\binom{\mathbf{r}+\mathbf{k}-\mathbf{j}}{\mathbf{s}}\|p_{\mathbf{j}}\rho^{-\varepsilon}\,H_{\mathbf{s}}\,\rho\,D^{\mathbf{r}+\mathbf{k}-\mathbf{j}-\mathbf{s}}f\|_{L^{2}(\mathbb{R}^{d})}\nonumber \\
 & \leq\sum_{\mathbf{j}\leq\mathbf{k}}\binom{\mathbf{k}}{\mathbf{j}}\sum_{\mathbf{s}\le\mathbf{r}+\mathbf{k}-\mathbf{j}}\sqrt{\mathbf{s}!}\binom{\mathbf{r}+\mathbf{k}-\mathbf{j}}{\mathbf{s}}\sup_{\boldsymbol{y}\in\mathbb{R}^{d}}|p_{\mathbf{j}}(\boldsymbol{y})H_{\mathbf{s}}(\boldsymbol{y})\rho^{1/2-\varepsilon}(\boldsymbol{y})|\nonumber \\
 & \qquad\qquad\times\|\,\,\sqrt{\rho}\,(D^{\mathbf{r}+\mathbf{k}-\mathbf{j}-\mathbf{s}}f)\|_{L^{2}(\mathbb{R}^{d})}\nonumber \\
 & =\sum_{\mathbf{j}\leq\mathbf{k}}\binom{\mathbf{k}}{\mathbf{j}}\sum_{\mathbf{s}\le\mathbf{r}+\mathbf{k}-\mathbf{j}}\sqrt{\mathbf{s}!}\binom{\mathbf{r}+\mathbf{k}-\mathbf{j}}{\mathbf{s}}\sup_{\boldsymbol{y}\in\mathbb{R}^{d}}|p_{\mathbf{j}}(\boldsymbol{y})H_{\mathbf{s}}(\boldsymbol{y})\rho^{1/2-\varepsilon}(\boldsymbol{y})|\nonumber \\
 & \qquad\qquad\times\|D^{\mathbf{r}+\mathbf{k}-\mathbf{j}-\mathbf{s}}f\|_{L_{\rho}^{2}(\mathbb{R}^{d})}<\infty,\label{eq:bound-Dk-h}
\end{align}
and we used the identity \eqref{eq:Leibniz} for $D^{\mathbf{k}-\mathbf{j}+\mathbf{r}}g$.
In \eqref{eq:bound-Dk-h}, the multi-index satisfies $\mathbf{r}+\mathbf{k}-\mathbf{j}-\mathbf{s}\in\{0,\dots,\alpha\}^{d}$,
and thus 
\begin{align*}
\sum_{\mathbf{k}\in\{0,1\}^{d}}\|D^{\mathbf{k}}(\rho^{-\varepsilon}D^{\mathbf{r}}g)\|_{L^{2}(\mathbb{R}^{d})}^{2}
=
	\sum_{\mathbf{k}\in\{0,1\}^{d}}\|D^{\mathbf{k}}h_{\mathbf{r}}\|_{L^{2}(\mathbb{R}^{d})}^{2} & \leq C_{\rev{\varepsilon}}\|f\|_{H_{\rho}^{\alpha}(\mathbb{R}^{d})}^{2}.
\end{align*}
Therefore, in view of \cref{prop:1st-order-embedding} we have $\sup_{\boldsymbol{x}\in\mathbb{R}^{d}}|\rho^{-\varepsilon}(\boldsymbol{x})D^{\mathbf{r}}g(\boldsymbol{x})|\leq C_{\rev{\varepsilon}}\|f\|_{H_{\rho}^{\alpha}(\mathbb{R}^{d})}$, hence $\|f\rho\|_{\text{unanchored}}+\|f\rho\|_{\text{decay}\rev{, \varepsilon}}\leq C_{d,\alpha,\rev{\varepsilon}}\|f\|_{H_{\rho}^{\alpha}(\mathbb{R}^{d})}$
follows.

With this, from the first displayed equation in \cite[Lemma~5]{Nuyens.D_Suzuki_2022_ScaledLatticeRd}
together with \cite[Propositions~7 \& 8]{Nuyens.D_Suzuki_2022_ScaledLatticeRd},
if $\boldsymbol{z}$ satisfies the second displayed equation in \cite[Lemma 5]{Nuyens.D_Suzuki_2022_ScaledLatticeRd},
then this $\boldsymbol{z}$ satisfies \eqref{eq:aff-lattice-error};
see the proof of \cite[Theorem 2]{Nuyens.D_Suzuki_2022_ScaledLatticeRd}.
Since such $\boldsymbol{z}$ can be constructed using a CBC construction
in \cite[Algorithm~3.14]{Dick.J_Kritzer_Pillichshammer_2022_Book_LatticeRules}, \rev{we have
\[
|I(f)-Q_N(f)|\le C_{\alpha,d,\beta} \|f\|_{H^{\alpha}_{\rho}(\mathbb{R}^d)}\bigg(\frac{(\ln N)^{\alpha d}}{N^{\alpha}}b^{d(\alpha +1/2)}+ b^{d\alpha} \mathrm{e}^{-\beta b^2 }+b^{d-2}\mathrm{e}^{-\beta b^2 } \bigg),
\]
where 
$\beta:=\varepsilon/2 \in (0,1/4)$ corresponds to  $\beta$ in \cite{Nuyens.D_Suzuki_2022_ScaledLatticeRd}, and 
the constant $C_{\alpha,d,\beta}$ is independent of $N$ and $b$. %
For $\eta>0$ arbitrary, we now choose $b=(2+\eta)\sqrt{\alpha\ln(N)}$. Then, since the bound above holds for any $1/(2+\eta)^2\leq \beta<1/4$, for such $\beta$ this choice of $b$ ensures 
 $\mathrm{e}^{-\beta b^2}=N^{- \beta (2+\eta)^2\alpha}\leq N^{-\alpha}$. This completes the proof.
}
\end{proof}

Affinely mapped digital nets also achieve the optimal convergence
rate, up to a logarithmic factor, as shown in \cite[Corollary 1]{Dick.J_Irrgeher_Leobacher_Pillichshammer_2018_SINUM_Hermite}.
\begin{proposition}[Affinely-mapped digital net, {\cite[Corollary 1]{Dick.J_Irrgeher_Leobacher_Pillichshammer_2018_SINUM_Hermite}}]
Let $\alpha\in\NN$, and let $Q_{N}$ be the quadrature rule defined
in \eqref{eq:affine-QMC} with $b=2\sqrt{{\alpha}\ln N}$, where points
$(\boldsymbol{t}^{(j)})_{j=1,\dots,N}$ are given by a higher-order
net of order $(2\alpha+1)$ with $N=p^{m}$ elements. Then, the worst
case error is bounded by 
\[
\sup_{f\neq0}\frac{|I(f)-Q_{N}(f)|}{\|f\|_{H_{\rho}^{\alpha}}}\leq C_{\alpha,d}N^{-\alpha}(\ln N)^{\frac{\alpha d}{2}+\frac{3d}{4}-\frac{1}{2}}.
\]
\end{proposition}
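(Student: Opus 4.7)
The strategy is to mirror the proof of Proposition~\ref{prop:lattice-affin}, substituting the lattice QMC estimate with the corresponding higher-order digital net estimate while tracking the scaling induced by the affine change of variables. As in \eqref{eq:QMC-domain-decomp}, I would first split
$I(f)-Q_{N}(f)=\int_{\mathbb{R}^{d}\setminus[-\bsb,\bsb]}f\rho\,\mathrm{d}\bsx+\bigl(\int_{[-\bsb,\bsb]}f\rho\,\mathrm{d}\bsx-Q_{N}(f)\bigr)$
and treat the two pieces separately.

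For the truncation tail I would use the pointwise decay bound $|f(\bsx)\rho(\bsx)|\leq C_{d,\alpha}\|f\|_{H_{\rho}^{\alpha}}\,\rho^{\varepsilon}(\bsx)$, valid for every $\varepsilon\in(0,1/2)$, that is established inside the proof of Proposition~\ref{prop:lattice-affin}. Integrating the Gaussian tail of $\rho^{\varepsilon}$ outside the box gives a bound of order $b^{d-1}\mathrm{e}^{-\varepsilon b^{2}/2}$; with $b=2\sqrt{\alpha\ln N}$ and $\varepsilon$ taken arbitrarily close to $1/2$, this is $O(N^{-\alpha+\delta})$ for every $\delta>0$, hence dominated by the main term.

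For the QMC contribution, define $g(\bst):=(2b)^{d}(f\rho)(\boldsymbol{\Psi}_{\mathrm{affine}}(\bst))$ on $[0,1]^{d}$. By the chain rule $D^{\mathbf{r}}g(\bst)=(2b)^{d+|\mathbf{r}|}(D^{\mathbf{r}}(f\rho))(\boldsymbol{\Psi}_{\mathrm{affine}}(\bst))$, and applying the affine substitution inside every term of the unanchored norm \eqref{eq:def-unanchored-alpha-ab} yields, after a short computation,
$\|g\|_{\mathscr{H}^{\alpha}([0,1]^{d})}\lesssim(2b)^{d/2+\alpha d}\sup_{[\bsa,\bsb]\subset\mathbb{R}^{d}}\|f\rho\|_{\mathscr{H}^{\alpha}([\bsa,\bsb])}$.
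The supremum on the right is already bounded by $C_{d,\alpha}\|f\|_{H_{\rho}^{\alpha}}$ in the proof of Proposition~\ref{prop:lattice-affin}. I would then invoke the standard worst-case error bound for an order-$(2\alpha+1)$ digital net with $N=p^{m}$ points in $\mathscr{H}^{\alpha}([0,1]^{d})$, which is of the form $c_{d,\alpha}N^{-\alpha}(\ln N)^{(d-1)/2}$; see \cite{Dick2008} and~\cite[Corollary~1]{Dick.J_Irrgeher_Leobacher_Pillichshammer_2018_SINUM_Hermite}.

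Combining these ingredients with $b=2\sqrt{\alpha\ln N}$, the prefactor $(2b)^{d/2+\alpha d}$ contributes $(\ln N)^{d/4+\alpha d/2}$, and multiplying by the digital-net rate $(\ln N)^{(d-1)/2}$ yields the exponent $d/4+\alpha d/2+(d-1)/2=\alpha d/2+3d/4-1/2$, exactly matching the stated bound. The main technical obstacle is the bookkeeping of the scaling factors through the change of variables applied term by term in \eqref{eq:def-unanchored-alpha-ab}: one has to verify that the worst-case exponent of $(2b)$ arising from the combination of the Jacobian $(2b)^{d}$, the derivative factors $(2b)^{|\mathbf{r}|}$, and the two nested substitutions on the coordinates in $\mathfrak{v}$ and $-\mathfrak{v}$ is precisely $d/2+\alpha d$, since any overshoot here would directly inflate the final logarithmic exponent.
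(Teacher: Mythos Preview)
The paper itself does not give a proof of this proposition; it is quoted verbatim from \cite[Corollary~1]{Dick.J_Irrgeher_Leobacher_Pillichshammer_2018_SINUM_Hermite}. So your proposal must be judged on its own, and the scaling analysis for the QMC term is correct: the computation $(2b)^{d/2+\alpha d}\asymp(\ln N)^{d/4+\alpha d/2}$ combined with the order-$(2\alpha+1)$ net rate $N^{-\alpha}(\ln N)^{(d-1)/2}$ in $\mathscr{H}^{\alpha}([0,1]^{d})$ does yield the stated exponent. (One remark: citing \cite[Corollary~1]{Dick.J_Irrgeher_Leobacher_Pillichshammer_2018_SINUM_Hermite} for the cube rate is circular, since that is the very statement you are proving; the appropriate reference is \cite{Dick2008} or \cite{Goda.T_Suzuki_Yoshiki_2018_OptimalOrderQuadrature}.)

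There is, however, a genuine gap in your treatment of the truncation term. From the pointwise bound $|f\rho|\le C_{\varepsilon}\|f\|_{H_{\rho}^{\alpha}}\rho^{\varepsilon}$ with $\varepsilon<1/2$ you obtain a tail of order $C_{\varepsilon}\,e^{-\varepsilon b^{2}/2}=C_{\varepsilon}\,N^{-2\varepsilon\alpha}$, i.e.\ $C_{\delta}N^{-\alpha+\delta}$ with $\delta=(1-2\varepsilon)\alpha>0$. For any \emph{fixed} $\delta>0$ this is \emph{not} $O\bigl(N^{-\alpha}(\ln N)^{\gamma}\bigr)$; you cannot send $\delta\to0$ afterwards because $C_{\delta}\to\infty$. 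So the sentence ``hence dominated by the main term'' is false as written.

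The fix is straightforward and does not use the decay norm at all: apply Cauchy--Schwarz in $L^{2}_{\rho}$ to get
\[
\biggl|\int_{\mathbb{R}^{d}\setminus[-\bsb,\bsb]}f\rho\,\mathrm{d}\bsx\biggr|
\le\|f\|_{L^{2}_{\rho}}\Bigl(\int_{\mathbb{R}^{d}\setminus[-\bsb,\bsb]}\rho\,\mathrm{d}\bsx\Bigr)^{1/2}
\lesssim_{d}\|f\|_{H_{\rho}^{\alpha}}\,e^{-b^{2}/4}.
\]
With $b=2\sqrt{\alpha\ln N}$ this is $\|f\|_{H_{\rho}^{\alpha}}\,N^{-\alpha}$, which is genuinely dominated by the main term. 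This is in fact why the proposition uses $b=2\sqrt{\alpha\ln N}$ rather than the smaller $b=\sqrt{(\alpha/2)\ln N}$ of Proposition~\ref{prop:lattice-affin}: the extra factor in $b$ is spent to make Cauchy--Schwarz alone suffice for the tail.
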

In the result quoted above, higher-order nets of order $(2\alpha+1)$ are considered. On the other hand, as noted in \cite{Dick.J_Irrgeher_Leobacher_Pillichshammer_2018_SINUM_Hermite}, higher-order nets of order $\alpha$ also achieve the optimal polynomial rate $N^{-\alpha}$, but with a logarithmic factor of a larger exponent.

\subsection{QMC with Möbius transformation}

An alternative to the affine transformation is the change of variables
$\Psi:(0,1)^{d}\to\R^{d}$ defined by 
\begin{equation}\label{def:cot}
\boldsymbol{\Psi}_{\mathrm{cot}}(\bst)\coloneqq(\phi(t_{1}),\ldots,\phi(t_{d})),\;\;\;\phi(t)\coloneqq-\cot(\pi t).
\end{equation}
This is a specific Möbius transformation considered in \cite{Suzuki.Y_etal_2025_MobiustransformedTrapezoidalRule}.
\rev{Using $\boldsymbol{\Psi}_{\mathrm{cot}}$ in \eqref{eq:QMC-CV} yields
	the following formula: 
	\begin{equation}
	Q_{N}(f)=\frac{1}{N}\sum_{j=1}^{N}f\left(\boldsymbol{\Psi}_{\mathrm{cot}}(\boldsymbol{t}^{(j)})\right)\rho\left(\boldsymbol{\Psi}_{\mathrm{cot}}(\boldsymbol{t}^{(j)})\right)\prod_{k=1}^{d}\phi'(t^{(j)}_{k}),\quad f\in H_{\rho}^{\alpha}.\label{eq:QMC+Moebius}
	\end{equation}}%
\rev{It turns out that the transformed integrand, defined on the unit cube by $g:=f(\boldsymbol{\Psi}_{\mathrm{cot}}(\cdot))\rho(\boldsymbol{\Psi}_{\mathrm{cot}}(\cdot))\prod_{k=1}^{d}\phi'$, belongs to suitable Sobolev spaces for which QMC methods work well. To show this, we first derive results on the derivatives of $g$. 
}

\rev{The derivatives of the transformation $\phi(t)=-\cot(\pi t)$ satisfy the following.}
\begin{lemma}\label{lem:cot-polynomial}
    For every $\tau\in\NN$, $\phi^{(\tau)}\circ\phi^{-1}(x)$ is a polynomial in \(x\) of degree $\tau+1$. 
\end{lemma}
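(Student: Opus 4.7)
The strategy is induction on $\tau$, using the chain rule to produce a clean recurrence. Define $P_\tau(x) := \phi^{(\tau)}\circ\phi^{-1}(x)$, so that equivalently $\phi^{(\tau)}(t) = P_\tau(\phi(t))$ for $t\in(0,1)$. The claim is then that $P_\tau$ is a polynomial of degree $\tau+1$ for every $\tau\in\NN$.

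For the base case $\tau=1$, differentiating $\phi(t)=-\cot(\pi t)$ gives $\phi'(t)=\pi\csc^2(\pi t)=\pi(1+\cot^2(\pi t))=\pi(1+x^2)$ with $x=\phi(t)$, so $P_1(x)=\pi(1+x^2)$, a polynomial of degree $2=\tau+1$. This step is essentially a one-line trigonometric identity and is where the whole structure comes from: the crucial feature of the cotangent-based Möbius change of variables is precisely that $\phi'$ is a polynomial in $\phi$ itself.

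For the inductive step, assume $P_\tau$ is a polynomial of degree $\tau+1$. Differentiating $\phi^{(\tau)}(t)=P_\tau(\phi(t))$ with respect to $t$ and applying the chain rule gives
\begin{equation*}
\phi^{(\tau+1)}(t) = P_\tau'(\phi(t))\,\phi'(t) = \pi(1+\phi(t)^2)\,P_\tau'(\phi(t)),
\end{equation*}
so that $P_{\tau+1}(x) = \pi(1+x^2)\,P_\tau'(x)$. Since $P_\tau$ has degree $\tau+1$, $P_\tau'$ has degree $\tau$, and multiplication by $\pi(1+x^2)$ raises the degree by exactly $2$, producing a polynomial of degree $\tau+2=(\tau+1)+1$. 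This closes the induction.

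There is essentially no obstacle: once the observation $\phi'=\pi(1+\phi^2)$ is in hand, the recurrence $P_{\tau+1}(x)=\pi(1+x^2)P_\tau'(x)$ and a degree count finish the proof. The only minor care needed is to phrase the statement so that $P_\tau$ is defined on the image of $\phi$, which is all of $\mathbb{R}$, so that the polynomial identity holds as an identity of polynomials (not merely on a subset), but this is automatic since two rational functions that agree on an interval agree as polynomials.
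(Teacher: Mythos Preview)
Your proof is correct. It differs from the paper's in that the paper reduces to the tangent case via $\cot(t)=\tan(\pi/2-t)$ and then quotes a result of Hoffman giving the Leibniz-type recurrence $P_{\tau+1}=\sum_{r=0}^{\tau}\binom{\tau}{r}P_{\tau-r}P_r$ for $P_\tau:=\tan^{(\tau)}\circ\tan^{-1}$, whereas you work directly with $\phi$ and obtain the simpler chain-rule recurrence $P_{\tau+1}(x)=\pi(1+x^2)P_\tau'(x)$. Both rest on the same observation that $\phi'=\pi(1+\phi^2)$, but your argument is fully self-contained and the degree count is immediate, while the paper's route outsources the induction to a reference and then transfers the conclusion from $\tan$ to $\cot$.
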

\begin{proof}
Noting $\phi^{(\tau)}(t)=-\pi^{\tau}\cot^{(\tau)}(\pi t)$ and $\phi^{-1}(x)=\frac{1}{\pi}\cot^{-1}(-x)$,
it suffices to show that $\cot^{(\tau)}\circ\cot^{-1}(x)$ is a polynomial
of degree $\tau+1$. 
From \cite[Equation~(5)]{Hoffman_1995} we know that $P_{\tau}(x):=\tan^{(\tau)}\circ\tan^{-1}(x)$ is a polynomial of degree $\tau+1$ satisfying \[P_{\tau+1}=\sum_{r=0}^{\tau}\binom{\tau}{r}P_{\tau-r}(x)P_r(x), \]
    with $P_0(x)=x$ and $P_1(x)=x^2+1$. With $\cot(t)=\tan(\pi/2-t)$, the statement follows.
\end{proof}

\rev{Next, we} derive a higher-order chain rule for component-wise composition,
which may be of interest by itself. Although this is a special case
of the general Faa di Bruno formula, component-wise composition makes
the formula simpler and thus easier to work with for broader context
in numerical analysis.

First, we derive a recurrence relation in the univariate setting.
For $r\ge1$ and $1\le \rev{\eta}\le r$, let $B_{r,\rev{\eta}}$ denote the (partial)
Bell polynomial\rev{, which may be defined by
\begin{equation}\label{eq:def-Bell}
	B_{r,\eta}(x_{1},\dots,x_{r})=\sum_{\boldsymbol{0}\le\mathbf{k}\le\boldsymbol{\eta}}r!\bigg(\prod_{\ell=1}^{r}\frac{x_{\ell}^{k_{\ell}}}{k_{\ell}!(\ell!)^{k_{\ell}}}\bigg)\mathbbm{1}_{\{|\mathbf{k}|=\eta,\,\sum_{\ell=1}^{r}\ell k_{\ell}=r\}}(\mathbf{k}),
\end{equation}
where $\mathbbm{1}_{A}(\mathbf{k})$
denotes the indicator function of a set $A$, and $\boldsymbol{\eta}:=(\eta,\ldots,\eta)$; see for example \cite[Definition 11.2]{Charalambides_2002}}. 
We use the convention $B_{0,0}=1$ and $B_{r,0}=0$
for $r\ge1$. 
\rev{We note that although $B_{r,\eta} = B_{r,\eta}(x_1,\dots,x_r)$ is defined as a polynomial in $x_1,\dots,x_r$, the constraints $\sum_{i=1}^r k_i=\eta$ and $\sum_{i=1}^r i k_i=r$ in its definition imply that $B_{r,\eta}$ in fact depends only on $x_1,\dots,x_{r-\eta+1}$.} 

\rev{
We will use the following identity, which shows that $\partial_j B_{r,\eta}$ is, up to a binomial coefficient,  again a partial Bell polynomial. Although the result appears to be well known, a proof seems difficult to find in the literature, so we provide one.
\begin{lemma}\label{lem:bell-der} Let $r\in\mathbb{N}$ and $1\le\eta\le r$. Then, for
$j=1,\dots,r-\eta+1$, we have
\[
\partial_{j} B_{r,\eta}(x_{1},\ldots,x_{r-\eta+1})=\binom{r}{j}\,B_{r-j,\eta-1}\!\left(x_{1},\ldots,x_{r-j-(\eta-1)+1}\right).
\]
\end{lemma}
\begin{proof} 
If we differentiate
$B_{r,\eta}$ defined in \eqref{eq:def-Bell} with respect to $x_{j}$, the terms with $k_{j}=0$
vanish: $\partial_{j}B_{r,\eta} =\sum_{\mathbf{e}_{j}\le\mathbf{k}\le\boldsymbol{\eta}}r!\frac{k_{j}\,x_{j}^{k_{j}-1}}{k_{j}!(j!)^{k_{j}}}\big(\prod_{\substack{\ell=1\\
\ell\ne j
}
}^{r}\frac{x_{\ell}^{k_{\ell}}}{k_{\ell}!(\ell!)^{k_{\ell}}}\big)\mathbbm{1}_{\{|\mathbf{k}|=\eta,\,\sum_{\ell=1}^{r}\ell k_{\ell}=r\}}(\mathbf{k})$, where  $\mathbf{e}_{j}$ is the unit vector whose $j$-th
component is $1$ and whose other components are $0$. We then extract
the binomial coefficient $\binom{r}{j}=\frac{r!}{j!(r-j)!}$ as follows: 
\begin{align*}
\partial_{j}B_{r,\eta} %
 & =\sum_{\mathbf{e}_{j}\le\mathbf{k}\le\boldsymbol{\eta}}\binom{r}{j}%
 \frac{(r-j)!x_{j}^{k_{j}-1}}{(k_{j}-1)!(j!)^{k_{j}-1}}\bigg(\prod_{\substack{\ell=1\\
\ell\ne j,
}
}^{r}\frac{x_{\ell}^{k_{\ell}}}{k_{\ell}!(\ell!)^{k_{\ell}}}\bigg)\mathbbm{1}_{\{|\mathbf{k}|=\eta,\,\sum_{\ell=1}^{r}\ell k_{\ell}=r\}}(\mathbf{k}).%
\end{align*}
Re-indexing $k_{j}$ to $k_{j}+1$ yields 
\begin{align*}
\partial_{j}B_{r,\eta}&=\binom{r}{j}\sum_{\mathbf{0}\le\mathbf{k}\le\boldsymbol{\eta}-\mathbf{e}_{j}}(r-j)!\frac{x_{j}^{k_{j}}}{k_{j}!(j!)^{k_{j}}}\bigg(\prod_{\substack{\ell=1\\
\ell\ne j
}
}^{r}\frac{x_{\ell}^{k_{\ell}}}{k_{\ell}!(\ell!)^{k_{\ell}}}\bigg)\mathbbm{1}_{\{|\mathbf{k}|=\eta-1,\,\sum_{\ell=1}^{r}\ell k_{\ell}=r-j\}}(\mathbf{k})
\\
 & = \binom{r}{j}\sum_{\mathbf{0}\le\mathbf{k}\le\boldsymbol{\eta}-\mathbf{1}}(r-j)!\frac{x_{j}^{k_{j}}}{k_{j}!(j!)^{k_{j}}}\bigg(\prod_{\substack{\ell=1\\
\ell\ne j
}
}^{r}\frac{x_{\ell}^{k_{\ell}}}{k_{\ell}!(\ell!)^{k_{\ell}}}\bigg)\mathbbm{1}_{\{|\mathbf{k}|=\eta-1,\,\sum_{\ell=1}^{r}\ell k_{\ell}=r-j\}}(\mathbf{k})\\ &=\binom{r}{j} B_{r-j,\eta-1},
\end{align*}
where in the second line we used the fact that given the constraint $|\mathbf{k}|=\eta-1$, the range of summation
may be replaced by $\mathbf{0}\le\mathbf{k}\le\boldsymbol{\eta}-\mathbf{1}$. This completes the proof.
\end{proof}
}
\rev{To derive a higher-order chain rule, we consider the functions $C_{r,\eta}(t)$, defined by evaluating partial Bell polynomials at the derivatives of $\Phi$.} 
Define $C_{0,0}:=B_{0,0}=1$ and $C_{r,-1}:=0$ for all $r\in\mathbb{N}_{0}$.
For $r\in\mathbb{N}$, $C_{r,0}:=B_{r,0}=0$, and for integers $1\le\eta\le r$,
define 
\[
C_{r,\eta}(t):=B_{r,\eta}\bigl(\Phi'(t),\Phi''(t),\ldots,\Phi^{(r-\eta+1)}(t)\bigr).
\]
The function $C_{r,\eta}$ thus defined satisfies the following relation.
\begin{lemma}\label{lem:Bell-recursive-uni} 
For integers \(r\) and \(\eta\) satisfying \(0\leq \eta\leq r\), suppose that
\(\Phi\colon(0,1)\to\mathbb{R}\) 
is \((r-\eta+2)\)-times differentiable. 
Then, we have
\[
C_{r+1,\eta}(t)=\Phi'(t)\,C_{r,\eta-1}(t)+C'_{r,\eta}(t),\quad\text{for all }t\in(0,1).
\]
\end{lemma} 
\begin{proof}
The cases $r=0$ as well as $r\in\mathbb{N}$ and $\eta=0$ are easy
to check. We show the statement for $1\le\eta\leq r$. First, the
partial Bell polynomials satisfy the recurrence relation \rev{\cite[Equation (11.11)]{Charalambides_2002},}
\begin{align*}
  C_{r+1,\eta}(t)&=B_{r+1,\eta}\bigl(\Phi'(t),\ldots,\Phi^{(r-\eta+2)}(t)\bigr)\\
  &=\sum_{\rev{q}=0}^{r-\eta+1}\binom{r}{\rev{q}}\Phi^{(\rev{q}+1)}(t)\;B_{r-\rev{q},\eta-1}\bigl(\Phi'(t),\ldots,\Phi^{(r-\rev{q}-\eta+2)}(t)\bigr).
\end{align*}
\rev{and thus, together with \cref{lem:bell-der},}
differentiating $C_{r,\eta}$ gives 
\begin{align*}
C'_{r,\eta}(t) & =\sum_{j=1}^{r-\eta+1}\partial_{j}B_{r,\eta}\Big|_{\substack{x_{m}=\Phi^{(m)}(t)\\
m=1,\dots,r-\eta+1
}
}\Phi^{(j+1)}(t)\\
 & =\sum_{j=1}^{r-\eta+1}\binom{r}{j}B_{r-j,\eta-1}\bigl(\Phi'(t),\ldots,\Phi^{(r-j-\eta+2)}(t)\bigr)\;\Phi^{(j+1)}(t).
\end{align*}
Hence, subtracting $C_{r+1,\eta}(t)$ from $C'_{r,\eta}(t)$ completes
the proof. 
\end{proof}

For $\boldsymbol{0}\le\boldsymbol{\eta}\le \mathbf{r}$ define 
\[
C_{\mathbf{r},\boldsymbol{\eta}}(\boldsymbol{t}):=\prod_{j=1}^{d}C_{ r_{j},\eta_{j}}(t_{j}).
\]
We will prove the higher-order chain rule using the following analogous formula for smooth functions.
\begin{lemma}\label{lem:faa-di-bruno-smooth}Let $v\in C^{\infty}(\mathbb{R}^{d})$.  
For $\Phi_{j}\in C^{\alpha}((0,1))$, with $\alpha\in \mathbb{N}$, $1\leq j\leq d$,  define $\boldsymbol{\Phi}(\boldsymbol{t})=\big(\Phi_{1}(t_{1}),\dots,\Phi_{d}(t_{d})\big)$
for $\boldsymbol{t}=(t_{1},\dots,t_{d})\in(0,1)^{d}$. For every multi-index
$\mathbf{r}\in\mathbb{N}_{0}^{d}$ with $|\mathbf{r}|_\infty\leq \alpha$
\begin{equation}
D^{\mathbf{r}}(v\circ\boldsymbol{\Phi})(\boldsymbol{t})=\sum_{\boldsymbol{0}\leq\boldsymbol{\eta}\le\mathbf{r}}(D^{\boldsymbol{\eta}}v\circ\boldsymbol{\Phi})(\boldsymbol{t})\;C_{\mathbf{r},\boldsymbol{\eta}}(\boldsymbol{t})\label{eq:faa-di-bruno-smooth}
\end{equation}
holds for every $\boldsymbol{t}\in(0,1)^{d}$, where $\boldsymbol{\eta}\le\mathbf{r}$
means $\eta_{j}\le r_{j}$ for all $j=1,\dots,d$. 
\end{lemma}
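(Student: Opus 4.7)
The plan is to prove the identity by induction on the dimension $d$, iterating the classical univariate Faà di Bruno formula one coordinate at a time. The key observation is that, since $\boldsymbol{\Phi}$ is component-wise, each partial $\partial_{j}$ acting on $v\circ\boldsymbol{\Phi}$ only interacts with $\Phi_{j}$; moreover, $C_{j;r_{j},\eta_{j}}(t_{j})$ depends only on $t_{j}$ and so commutes past partials in other coordinates. Combined with the commutativity of mixed partials for smooth $v$, this lets the multivariate chain rule decouple into $d$ independent univariate applications.

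For the base case $d=1$, the claim is exactly the univariate Faà di Bruno formula,
\[
\frac{d^{r}}{dt^{r}}u(\Phi(t))=\sum_{\eta=0}^{r}u^{(\eta)}(\Phi(t))\,B_{r,\eta}\bigl(\Phi'(t),\dots,\Phi^{(r-\eta+1)}(t)\bigr),
\]
once one identifies $B_{r,\eta}(\Phi'(t),\dots,\Phi^{(r-\eta+1)}(t))$ with $C_{1;r,\eta}(t)$. For the inductive step from $d-1$ to $d$, I would first apply $\partial_{1}^{r_{1}}$ to $v\circ\boldsymbol{\Phi}$, treating $t_{2},\dots,t_{d}$ as parameters. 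Applying the univariate Faà di Bruno formula to the function $s\mapsto v(s,\Phi_{2}(t_{2}),\dots,\Phi_{d}(t_{d}))$ at $s=\Phi_{1}(t_{1})$ gives
\[
\partial_{1}^{r_{1}}(v\circ\boldsymbol{\Phi})(\boldsymbol{t})=\sum_{\eta_{1}=0}^{r_{1}}(\partial_{1}^{\eta_{1}}v\circ\boldsymbol{\Phi})(\boldsymbol{t})\,C_{1;r_{1},\eta_{1}}(t_{1}).
\]

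Then I would apply $\partial_{2}^{r_{2}}\cdots\partial_{d}^{r_{d}}$ term by term. Since $C_{1;r_{1},\eta_{1}}(t_{1})$ is independent of $t_{2},\dots,t_{d}$, it factors out of these partials, leaving the task of differentiating $(\partial_{1}^{\eta_{1}}v\circ\boldsymbol{\Phi})(\boldsymbol{t})$ with respect to $(t_{2},\dots,t_{d})$. For fixed $t_{1}$, this is precisely the composition of the $(d-1)$-variate smooth function $(s_{2},\dots,s_{d})\mapsto(\partial_{1}^{\eta_{1}}v)(\Phi_{1}(t_{1}),s_{2},\dots,s_{d})$ with the component-wise map $\tilde{\boldsymbol{\Phi}}(\tilde{\boldsymbol{t}}):=(\Phi_{2}(t_{2}),\dots,\Phi_{d}(t_{d}))$, to which the inductive hypothesis applies and yields a sum over $(\eta_{2},\dots,\eta_{d})$ with coefficient $\prod_{j=2}^{d}C_{j;r_{j},\eta_{j}}(t_{j})$. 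Collecting and reindexing via $\boldsymbol{\eta}=(\eta_{1},\dots,\eta_{d})$ delivers \eqref{eq:faa-di-bruno-smooth}.

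The main obstacle is purely notational: ensuring that the iterated sums collapse into a single sum over multi-indices $\boldsymbol{0}\le\boldsymbol{\eta}\le\mathbf{r}$ and that the factors corresponding to different coordinates combine into $C_{\mathbf{r},\boldsymbol{\eta}}(\boldsymbol{t})=\prod_{j=1}^{d}C_{j;r_{j},\eta_{j}}(t_{j})$. No analytic difficulty arises beyond the classical univariate formula and the commutativity of mixed partials for smooth $v$; in particular, the recurrence in Lemma~\ref{lem:Bell-recursive-uni} is not needed for this argument, though it offers an alternative route by induction on $|\mathbf{r}|_{1}$ in a single coordinate.
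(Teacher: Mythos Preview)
Your proof is correct but follows a genuinely different route from the paper's. The paper proves \eqref{eq:faa-di-bruno-smooth} by induction on $|\mathbf{r}|_{1}$: starting from the trivial case $\mathbf{r}=\boldsymbol{0}$, it differentiates the identity once more in some coordinate $t_{j}$ and then invokes the Bell-polynomial recurrence of Lemma~\ref{lem:Bell-recursive-uni} to regroup the resulting terms into the form $C_{\mathbf{r}+\mathbf{e}_{j},\boldsymbol{\eta}}$. Your argument, by contrast, inducts on the dimension $d$, peeling off one coordinate at a time and quoting the classical univariate Fa\`a di Bruno formula as a black box; the product structure $C_{\mathbf{r},\boldsymbol{\eta}}=\prod_{j}C_{j;r_{j},\eta_{j}}$ then falls out immediately from the component-wise form of $\boldsymbol{\Phi}$. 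Your approach is arguably more transparent and, as you note, makes Lemma~\ref{lem:Bell-recursive-uni} unnecessary for this lemma. The paper's approach, on the other hand, is more self-contained---it does not assume the univariate formula but effectively re-derives it through the recurrence---and the recurrence itself is a standalone tool.
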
 
\begin{proof}
We use induction on $|\mathbf{r}|=r_{1}+\dots+r_{d}$. The case $\mathbf{r}=\mathbf{0}$
is trivial. 
Assume that \eqref{eq:faa-di-bruno-smooth} holds for 
all $|\mathbf{r}|\le\beta$ with \(|\mathbf{r}|_{\infty}\leq \alpha\), and let $|\mathbf{r}|=\beta$ with \(|\mathbf{r}|_{\infty}\leq \alpha\).  
If $\beta=d\alpha$, then there is nothing to show. For $\beta<d\alpha$, fix $j\in\{1,\dots,d\}$ such that \(r_j\leq \alpha-1\)
and differentiate \eqref{eq:faa-di-bruno-smooth} with respect to
$t_{j}$: 
\[
D^{\mathbf{r}+\mathbf{e}_{j}}(v\circ\boldsymbol{\Phi})(\boldsymbol{t})=\sum_{\boldsymbol{0}\leq\boldsymbol{\eta}\le\mathbf{r}}\partial_{_{j}}\bigl(v_{\boldsymbol{\eta}}\circ\boldsymbol{\Phi}\bigr)(\boldsymbol{t})C_{\mathbf{r},\boldsymbol{\eta}}(\boldsymbol{t})+\sum_{\boldsymbol{0}\leq\boldsymbol{\eta}\le\mathbf{r}}v_{\boldsymbol{\eta}}(\boldsymbol{\Phi}(\boldsymbol{t}))\,\partial_{_{j}}C_{\mathbf{r},\boldsymbol{\eta}}(\boldsymbol{t}),
\]
where we let $v_{\boldsymbol{\eta}}:=D^{\boldsymbol{\eta}}v$ for
notational ease. 

We now re-index the first sum so that the summation over $\boldsymbol{\eta}$ on the right-hand side runs up to $\mathbf{r}+\mathbf{e}_j$. 
Since only the $j$-th component
of $\boldsymbol{\Phi}$ depends on $t_{j}$, we have 
\(
\partial_{_{j}}\bigl(v_{\boldsymbol{\eta}}\circ\boldsymbol{\Phi}\bigr)(\boldsymbol{t})=v_{\boldsymbol{\eta}+\mathbf{e}_{j}}(\boldsymbol{\Phi}(\boldsymbol{t}))\,\Phi_{j}'(t_{j})\). Moreover, $\partial_{j}C_{\mathbf{r},\boldsymbol{\eta}}=\bigl(\partial_{j}C_{ r_{j},\eta_{j}}\bigr)\prod_{k\ne j}C_{ r_{k},\eta_{k}}=0$
if $\eta_{j}=0$. Combining these, we obtain 
\begin{align*}
&D^{\mathbf{r}+\mathbf{e}_{j}}(v\circ\boldsymbol{\Phi})(\boldsymbol{t})  =\sum_{\substack{\boldsymbol{0}\leq\boldsymbol{\eta}\le\mathbf{r}+\mathbf{e}_{j}\\
\eta_{j}\geq1
}
}v_{\boldsymbol{\eta}}(\boldsymbol{\Phi}(\boldsymbol{t}))\,\Phi_{j}'(t_{j})C_{\mathbf{r},\boldsymbol{\eta}-\mathbf{e}_{j}}(\boldsymbol{t})+\sum_{\boldsymbol{0}\leq\boldsymbol{\eta}\le\mathbf{r}}v_{\boldsymbol{\eta}}(\boldsymbol{\Phi}(\boldsymbol{t}))\,\partial_{_{j}}C_{\mathbf{r},\boldsymbol{\eta}}(\boldsymbol{t})\\
 &\! =\!\!\sum_{\substack{\boldsymbol{0}\leq\boldsymbol{\eta}\le\mathbf{r}+\mathbf{e}_{j}\\
\eta_{j}=r_{j}+1
}
}
    \!\!v_{\boldsymbol{\eta}}(\boldsymbol{\Phi}(\boldsymbol{t}))\,\Phi_{j}'(t_{j})C_{\mathbf{r},\boldsymbol{\eta}-\mathbf{e}_{j}}(\boldsymbol{t})
    +\!
    \sum_{\substack{\boldsymbol{0}\leq\boldsymbol{\eta}\le\mathbf{r}\\
\eta_{j}\geq1
}
}\!v_{\boldsymbol{\eta}}(\boldsymbol{\Phi}(\boldsymbol{t}))\bigl[\Phi_{j}'(t_{j})C_{\mathbf{r},\boldsymbol{\eta}-\mathbf{e}_{j}}(\boldsymbol{t})+\partial_{_{j}}C_{\mathbf{r},\boldsymbol{\eta}}(\boldsymbol{t})\bigr].
\end{align*}
We further rewrite the right-hand side. For the first term, from 
\(B_{r+1,r+1}(x)=x^{r+1}=x\,B_{r,r}(x)\) for  $r\geq0$ and $\eta_{j}-1=r_{j}$,
\begin{align*}
\Phi_{j}'(t_{j})C_{\mathbf{r},\boldsymbol{\eta}-\mathbf{e}_{j}}(\boldsymbol{t})&=\Phi_{j}'(t_{j})C_{ r_{j},r_{j}}(t_{j})\prod_{k\ne j}C_{ r_{k},\eta_{k}}(t_{k})\\&=C_{ r_{j}+1,r_{j}+1}(t_{j})\prod_{k\ne j}C_{ r_{k},\eta_{k}}(t_{k})=C_{\mathbf{r}+\mathbf{e}_{j},\boldsymbol{\eta}}(\boldsymbol{t})
\end{align*}
holds; for the second term, we use \cref{lem:Bell-recursive-uni} to see 
\begin{align*}
\Phi_{j}'(t_{j})C_{\mathbf{r},\boldsymbol{\eta}-\mathbf{e}_{j}}(\boldsymbol{t})+\partial_{_{j}}C_{\mathbf{r},\boldsymbol{\eta}}(\boldsymbol{t}) & =\Bigl(\Phi_{j}'(t_{j})C_{ r_{j},\eta_{j}-1}(t_{j})+\partial_{j}C_{ r_{j},\eta_{j}}(t_{j})\Bigr)\prod_{k\ne j}C_{ r_{k},\eta_{k}}(t_{k})\\
 & =\Bigl(C_{ r_{j}+1,\eta_{j}}(t_{j})\Bigr)\prod_{k\ne j}C_{ r_{k},\eta_{k}}(t_{k}) =C_{\mathbf{r}+\mathbf{e}_{j},\boldsymbol{\eta}}(\boldsymbol{t}).
\end{align*}
Hence, 
\begin{align*}
    D^{\mathbf{r}+\mathbf{e}_{j}}(v\circ\boldsymbol{\Phi})(\boldsymbol{t})&=\sum_{\substack{\boldsymbol{0}\leq\boldsymbol{\eta}\le\mathbf{r}+\mathbf{e}_{j}\\
\eta_{j}=r_{j}+1
}
}v_{\boldsymbol{\eta}}(\boldsymbol{\Phi}(\boldsymbol{t}))\,C_{\mathbf{r}+\mathbf{e}_{j},\boldsymbol{\eta}}(\boldsymbol{t})+\sum_{\substack{\boldsymbol{0}\leq\boldsymbol{\eta}\le\mathbf{r}\\
\eta_{j}\geq1
}
}v_{\boldsymbol{\eta}}(\boldsymbol{\Phi}(\boldsymbol{t}))\,C_{\mathbf{r}+\mathbf{e}_{j},\boldsymbol{\eta}}(\mathbf{t})\\&=\sum_{\boldsymbol{0}\leq\boldsymbol{\eta}\le\mathbf{r}+\mathbf{e}_{j}}v_{\boldsymbol{\eta}}(\boldsymbol{\Phi}(\boldsymbol{t}))\,C_{\mathbf{r}+\mathbf{e}_{j},\boldsymbol{\eta}}(\boldsymbol{t}),
\end{align*}
which is \eqref{eq:faa-di-bruno-smooth} for $\mathbf{r}+\mathbf{e}_{j}$.
This completes the proof. 
\end{proof}

\begin{proposition}\label{prop:faa-di-bruno-comp}For a multi-index
$\mathbf{r}\in\mathbb{N}_{0}^{d}$ with $|\mathbf{r}|_\infty\leq \alpha$, suppose that $v\in L_{\mathrm{loc}}^{1}(\mathbb{R}^{d})$ 
admits weak derivatives $D^{\mathbf{\boldsymbol{\eta}}}v\in L_{\mathrm{loc}}^{1}(\mathbb{R}^{d})$
for $\boldsymbol{\eta}\le\mathbf{r}$, i.e., $\eta_{j}\le r_{j}$
for $1\leq j\leq d$. Let $\Phi_{j}\in C^{\alpha}((0,1))$ be invertible
with $\Phi_{j}^{-1}\in C^{1}((0,1))$ for $1\leq j\leq d$. Define
$\boldsymbol{\Phi}(\boldsymbol{t})=\big(\Phi_{1}(t_{1}),\dots,\Phi_{d}(t_{d})\big)$
for $\boldsymbol{t}=(t_{1},\dots,t_{d})\in(0,1)^{d}$. Then, 
\begin{equation}
D^{\mathbf{r}}(v\circ\boldsymbol{\Phi})(\boldsymbol{t})=\sum_{\boldsymbol{0}\leq\boldsymbol{\eta}\le\mathbf{r}}(D^{\boldsymbol{\eta}}v\circ\boldsymbol{\Phi})(\boldsymbol{t})\;C_{\mathbf{r},\boldsymbol{\eta}}(\boldsymbol{t})\label{eq:faa-di-bruno-Sobolev}
\end{equation}
holds for a.e.~$\boldsymbol{t}\in(0,1)^{d}$. 
\end{proposition}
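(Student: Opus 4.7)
The plan is to reduce the statement to the already-proved smooth case (Lemma~\ref{lem:faa-di-bruno-smooth}) by mollification, and then transfer convergence from $\mathbb{R}^d$ to $(0,1)^d$ using the change of variables $\boldsymbol{x}=\boldsymbol{\Phi}(\boldsymbol{t})$, whose Jacobian is continuous on compacta because $\boldsymbol{\Phi}^{-1}$ is component-wise $C^1$. Since \eqref{eq:faa-di-bruno-Sobolev} is a local identity, it suffices to establish it on every compact $K\subset(0,1)^d$.

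First, fix such a $K$ and note that $K':=\boldsymbol{\Phi}(K)$ is compact in $\mathbb{R}^d$. Let $v_n\in C^\infty(\mathbb{R}^d)$ be a standard mollification of $v$. Since $D^{\boldsymbol{\eta}}v\in L^1_{\mathrm{loc}}(\mathbb{R}^d)$ for every $\boldsymbol{\eta}\le\mathbf{r}$, we have $D^{\boldsymbol{\eta}}v_n\to D^{\boldsymbol{\eta}}v$ in $L^1(K')$ for each such $\boldsymbol{\eta}$. Applying Lemma~\ref{lem:faa-di-bruno-smooth} to $v_n$ yields
\[
D^{\mathbf{r}}(v_n\circ\boldsymbol{\Phi})(\boldsymbol{t})=\sum_{\boldsymbol{0}\le\boldsymbol{\eta}\le\mathbf{r}}(D^{\boldsymbol{\eta}}v_n\circ\boldsymbol{\Phi})(\boldsymbol{t})\,C_{\mathbf{r},\boldsymbol{\eta}}(\boldsymbol{t})
\]
pointwise on $K$.

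Next, I would pass to the limit in each summand. The factors $C_{\mathbf{r},\boldsymbol{\eta}}$ are continuous, hence bounded on $K$. By the change of variables $\boldsymbol{x}=\boldsymbol{\Phi}(\boldsymbol{t})$ and the continuity (hence boundedness) of $\prod_j(\Phi_j^{-1})'$ on $K'$,
\[
\int_K \bigl|(D^{\boldsymbol{\eta}}v_n-D^{\boldsymbol{\eta}}v)\circ\boldsymbol{\Phi}\bigr|\,|C_{\mathbf{r},\boldsymbol{\eta}}|\,\mathrm{d}\boldsymbol{t}\le C_K\|D^{\boldsymbol{\eta}}v_n-D^{\boldsymbol{\eta}}v\|_{L^1(K')}\longrightarrow 0.
\]
Hence the right-hand side of the smooth identity converges in $L^1(K)$ to $F:=\sum_{\boldsymbol{0}\le\boldsymbol{\eta}\le\mathbf{r}}(D^{\boldsymbol{\eta}}v\circ\boldsymbol{\Phi})\,C_{\mathbf{r},\boldsymbol{\eta}}$. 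To identify $F$ with the weak derivative $D^{\mathbf{r}}(v\circ\boldsymbol{\Phi})$, I test against $\psi\in C_c^\infty(\mathrm{int}(K))$: smooth integration by parts gives
\[
\int_K D^{\mathbf{r}}(v_n\circ\boldsymbol{\Phi})\,\psi\,\mathrm{d}\boldsymbol{t}=(-1)^{|\mathbf{r}|}\int_K (v_n\circ\boldsymbol{\Phi})\,D^{\mathbf{r}}\psi\,\mathrm{d}\boldsymbol{t},
\]
whose left-hand side converges to $\int_K F\psi\,\mathrm{d}\boldsymbol{t}$ (by the $L^1(K)$-convergence just shown) and whose right-hand side converges to $(-1)^{|\mathbf{r}|}\int_K(v\circ\boldsymbol{\Phi})\,D^{\mathbf{r}}\psi\,\mathrm{d}\boldsymbol{t}$, again by the change-of-variables argument applied to $v_n-v$. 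This identifies $F$ as the weak derivative $D^{\mathbf{r}}(v\circ\boldsymbol{\Phi})$ on $\mathrm{int}(K)$; exhausting $(0,1)^d$ by compacta finishes the argument.

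The main obstacle I anticipate is conceptual rather than technical: one might worry that to recover the mixed weak derivative of order $\mathbf{r}$, intermediate mixed partials not included among $\{D^{\boldsymbol{\eta}}v\}_{\boldsymbol{\eta}\le\mathbf{r}}$ are implicitly needed. The mollification argument sidesteps this, because smooth approximants automatically carry all mixed partials, and the limiting identity only requires $L^1_{\mathrm{loc}}$-convergence of those $D^{\boldsymbol{\eta}}v$ that actually appear on the right-hand side. The $C^1$-regularity of $\boldsymbol{\Phi}^{-1}$ is exactly what transports $L^1$-bounds between $\mathbb{R}^d$ and $(0,1)^d$, so no further regularity of $v$ beyond the hypothesis is used.
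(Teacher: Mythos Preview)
Your proposal is correct and follows essentially the same approach as the paper: both reduce to the smooth Fa\`a di Bruno formula (Lemma~\ref{lem:faa-di-bruno-smooth}) and pass to the weak-derivative identity by mollification, using the component-wise $C^1$ regularity of $\boldsymbol{\Phi}^{-1}$ to transport $L^1$-bounds between $\mathbb{R}^d$ and $(0,1)^d$ on compacta. The paper's proof is terser---it simply invokes ``a standard argument for change-of-variables for Sobolev functions''---whereas you have spelled that argument out explicitly.
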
 
\begin{proof}
It suffices to show that the right-hand side of \eqref{eq:faa-di-bruno-Sobolev}
is locally integrable and that
\[
\int_{(0,1)^{d}}(v\circ\boldsymbol{\Phi})(\boldsymbol{t})\,D^{\mathbf{r}}\varphi(\boldsymbol{t})\mathrm{d}\boldsymbol{t}=(-1)^{|\mathbf{r}|}\int_{(0,1)^{d}}\left(\sum_{\boldsymbol{0}\leq\boldsymbol{\eta}\le\mathbf{r}}(D^{\boldsymbol{\eta}}v\circ\boldsymbol{\Phi})(\boldsymbol{t})\;C_{\mathbf{r},\boldsymbol{\eta}}(\boldsymbol{t})\right)\varphi(\boldsymbol{t})\mathrm{d}\boldsymbol{t}
\]
holds for every compactly supported $\varphi$.

Local integrability follows from the boundedness of $(\Phi_{j}^{-1})'$
and the derivatives $\Phi_{j}^{(\tau)}$ on compact subsets of $(0,1)^{d}$,
as well as from the facts that $D^{\boldsymbol{\eta}}v$ is locally
integrable on $\mathbb{R}^{d}$, and that the image of each compact
subset of $(0,1)^{d}$ under $\boldsymbol{\Phi}$ is compact in $\mathbb{R}^{d}$.
To show the equality above, fix $\varphi$ and take an open set $U$ such that $\mathrm{supp}(\varphi)\subset U\Subset(0,1)^{d}$\rev{, i.e., the closure $\overline{U}$ of $U\subset\mathbb{R}^d$ is compact and $\overline{U}\subset (0,1)^{d}$}.
Noting that $(\Phi_{j}^{-1})'$ and $\Phi_{j}^{(\tau)}$ are bounded
on $U$, we invoke \cref{lem:faa-di-bruno-smooth} together
with a standard argument for change-of-variables for Sobolev functions
to conclude that the equality holds on $U$, and thus equivalently
on $(0,1)^{d}$. 
\end{proof}

\rev{Having established these identities for derivatives, we are now in a position to derive bounds for the transformed integrand and its derivatives.} 
Note that, unlike in the common setting for Sobolev spaces, we do
not assume boundedness of the derivatives of $\boldsymbol{\Phi}$,
since the derivatives of the change of variables $\boldsymbol{\Phi}_{\mathrm{cot}}$
we consider are unbounded. 
As a result, and in contrast to standard
results, the integrability of $D^{\mathbf{r}}(v\circ\boldsymbol{\Phi})$
does not follow immediately, and the membership of $(f\cdot\rho)\circ\boldsymbol{\Phi}_{\mathrm{cot}}$ in the Sobolev space is handled separately in the next proposition.
\begin{proposition}\label{prop:g-zero-boudnary}
Define $g(\bst):=f(\boldsymbol{\Psi}_{\mathrm{cot}}(\bst))\rho(\boldsymbol{\Psi}_{\mathrm{cot}}(\bst))\prod_{k=1}^{d}\phi'(t_{k})$ for $f\in H_{\rho}^{\alpha}$ with $\alpha\in\mathbb{N}$.
We have the following: for all multi-indices $\mathbf{r}\in\{0,1,\ldots,\alpha\}^{d}$
\begin{equation}
\|D^{\mathbf{r}}g\|_{L^{2}([0,1]^{d})}\le C\rev{_{d.\alpha}}\|f\|_{H_{\rho}^{\alpha}},
\label{eq:D_g_bound}
\end{equation}
where $C\rev{_{d.\alpha}}>0$ is a constant independent of $\mathbf{r}$ and $f$\rev{ but depends on $d$ and $\alpha$};
for every $\ell=1,\dots,d$, every $\mathbf{r}\in\{0,\ldots,\alpha\}^{d}$
such that $r_{\ell}\leq\alpha-1$, and almost every $(t_{1},\dots,t_{\ell-1},t_{\ell+1},\dots,t_{d})\in(0,1)^{d-1}$,
\begin{equation}
\lim_{t_{\ell}\to0+}D^{\mathbf{r}}g(\bst)=\lim_{t_{\ell}\to1-}D^{\mathbf{r}}g(\bst)=0,\label{eq:g_periodicity}
\end{equation}
and as a consequence, the map $t_{\ell}\mapsto D^{\mathbf{r}}g(\bst)$
extends continuously to $[0,1]$ by zero. Furthermore, for $\mathbf{r}\in\{0,\ldots,\alpha-1\}^{d}$,
\eqref{eq:g_periodicity} holds for every $(t_{1},\dots,t_{\ell-1},t_{\ell+1},\dots,t_{d})\in(0,1)^{d-1}$.
\end{proposition}
\begin{proof}
We first prove \eqref{eq:D_g_bound} for all $\mathbf{r}\in\{0,1,\ldots,\alpha\}^{d}$.
By \cref{prop:faa-di-bruno-comp}, we have 
\begin{equation*}
 D^{\mathbf{s}}\bigl((f\cdot\rho)\circ\boldsymbol{\Psi}_{\mathrm{cot}}\bigr)(\boldsymbol{t})
 =\sum_{\boldsymbol{0}\leq\boldsymbol{\eta}\le\mathbf{s}}\bigl(D^{\boldsymbol{\eta}}(f\cdot\rho)\circ\boldsymbol{\Psi}_{\mathrm{cot}}\bigr)(\boldsymbol{t})\;C_{\mathbf{s},\boldsymbol{\eta}}(\boldsymbol{t}).  
\end{equation*}
for all \(\boldsymbol{0}\leq\mathbf{s}\leq\mathbf{r}\).
Together with the product rule, we get
\begin{align*}
[D^{\mathbf{r}}g](\bst) & =
\sum_{\mathbf{s}\le\mathbf{r}}\binom{\mathbf{r}}{\mathbf{s}}D^{\mathbf{s}}\bigl(
	(f\cdot\rho)\circ
	\boldsymbol{\Psi}_{\mathrm{cot}}
\bigr)(\bst)\,
	\biggl(D^{\mathbf{r}-\mathbf{s}}\prod_{k=1}^{d}\phi'(t_{k})\biggr)\\
 & =\sum_{\mathbf{s}\le\mathbf{r}}\binom{\mathbf{r}}{\mathbf{s}}\bigg(\sum_{\boldsymbol{\eta}\le\mathbf{s}}
 \bigl(D^{\boldsymbol{\eta}}(f\cdot\rho)\circ
 \boldsymbol{\Psi}_{\mathrm{cot}}\bigr)(\bst)\;C_{\mathbf{s},\boldsymbol{\eta}}(\boldsymbol{t})\bigg)\,
 \biggl(D^{\mathbf{r}-\mathbf{s}}\prod_{k=1}^{d}\phi'(t_{k})\biggr),
\end{align*}
the right-hand side of which is a linear combination of 
\[w(\bst):=
\bigg(
    \bigl([D^{\boldsymbol{\lambda}_{1}}f]\circ\boldsymbol{\Psi}_{\mathrm{cot}}\bigr)(\bst)\bigg)\bigg(
    \bigl([D^{\boldsymbol{\lambda}_{2}}\rho]\circ \boldsymbol{\Psi}_{\mathrm{cot}}\bigr)(\bst)\bigg)\prod_{k=1}^{d}
	\prod_{\tau=1}^{\lambda_{3,k}}\big(D^{\tau}\phi(t_{k})\big)^{\ell_{k,\tau}}\]
with some $\boldsymbol{\lambda}_{1},\boldsymbol{\lambda}_{2}\le\mathbf{r}$, $\boldsymbol{\lambda}_{3}=(\lambda_{3,1},\ldots,\lambda_{3,d})\le\mathbf{r}+\boldsymbol{1}$,  
and $\ell_{k,\tau}\in\mathbb{N}_{0}$. 
The $L^{2}([0,1]^{d})$ norm of each
term is then bounded by 
\begin{align*}
&\|w\|_{L^{2}([0,1]^{d})}^{2}  =\int_{\R^{d}}w^{2}(\boldsymbol{\Psi}_{\mathrm{cot}}^{-1}(\bsx))\bigg(\prod_{k=1}^{d}(\phi^{-1})'(x_{k})\bigg)\rd\bsx\\
 & =\int_{\R^{d}}
 	\biggl[
		\Bigl(D^{\boldsymbol{\lambda}_{1}}\!\!\;f(\bsx)\Bigr)\Bigl(D^{\boldsymbol{\lambda}_{2}}\!\!\;\rho(\bsx)\Bigr)
	\biggr]^{2}\bigg(\prod_{k=1}^{d} \bigg(\prod_{\tau=1}^{\lambda_{3,k}}\big([D^{\tau}\phi]\circ\phi^{-1}(x_{k})\big)^{2\ell_{k,\tau}}\bigg)(\phi^{-1})'(x_{k})\bigg)\rd\bsx\\
 & \le
 \|D^{\boldsymbol{\lambda}_{1}}f\|_{L_{\rho}^{2}(\R^{d})}^{2}
 \\
 &\qquad \times
 \sup_{\bsy\in\R^{d}}
 	\Bigl([D^{\boldsymbol{\lambda}_{2}}\rho](\bsy)\Bigr)^{2}(\rho(\bsy))^{-1}
		\biggl(\prod_{k=1}^{d}
		\bigg(\prod_{\tau=1}^{\lambda_{3,k}}\big([D^{\tau}\phi]\circ\phi^{-1}(y_{k})\big)^{2\ell_{k,\tau}}\bigg)(\phi^{-1})'(y_{k})
		\biggr)\\
 & =\|D^{\boldsymbol{\lambda}_{1}}f\|_{L_{\rho}^{2}(\R^{d})}^{2}
 \\
 &\ \ \times
 \prod_{k=1}^{d} \left(\sup_{y_{k}\in\mathbb{R}}\rho(y_{k})\,(\lambda_{2,k}!)\,H_{\lambda_{2,k}}^{2}(y_{k})\bigg(\prod_{\tau=1}^{\lambda_{3,k}}\big([D^{\tau}\phi]\circ\phi^{-1}(y_{k})\big)^{2\ell_{k,\tau}}\bigg)(\phi^{-1})'(y_{k})\right)
<\infty,
\end{align*}
where, in the last line, we used $(\phi^{-1})'(y_{k})=1/\bigl(\pi(1+y_{k}^2)\bigr)$ as well as the fact that $([D^{\tau}\phi]\circ\phi^{-1}(x_{k})\big)^{2\ell_{k,\tau}}$ is a polynomial of degree $2\ell_{k,\tau}(\tau+1)$; see \cref{lem:cot-polynomial}.

We now assume $r_{\ell}\leq\alpha-1$ and prove the vanishing
boundary condition for $w(\bst)$ in the $t_{\ell}$-direction,
from which \eqref{eq:g_periodicity} will follow. 
First, from
\begin{align}
|w(\bst)| & = 
	\bigg|
		\Big(([D^{\boldsymbol{\lambda}_{1}}f]\circ\boldsymbol{\Psi}_{\mathrm{cot}})(\bst)\Big)
		\Big(([D^{\boldsymbol{\lambda}_{2}}\rho]\circ\boldsymbol{\Psi}_{\mathrm{cot}})(\bst)\Big)
		\prod_{k=1}^{d}
\prod_{\tau=1}^{\lambda_{3,k}}\big(D^{\tau}\phi(t_{k})\big)^{\ell_{k,\tau}}\bigg| \nonumber \\ 
 & \le\bigg|
 	\Big(([D^{\boldsymbol{\lambda}_{1}}f]\circ\boldsymbol{\Psi}_{\mathrm{cot}})(\bst)\Big)
	\Big((\rho^{1/2+\varepsilon}\circ\boldsymbol{\Psi}_{\mathrm{cot}})(\bst)\Big)\bigg|   \label{eq:w_two_factors}\\  &\;\;
	\times\bigg|\Big(([D^{\boldsymbol{\lambda}_{2}}\rho]\circ\boldsymbol{\Psi}_{\mathrm{cot}})(\bst)\Big)\Big((\rho^{-1/2-\varepsilon}\circ\boldsymbol{\Psi}_{\mathrm{cot}})(\bst)\Big)\prod_{k=1}^{d}\prod_{\tau=1}^{\lambda_{3,k}}\big(D^{\tau}\phi(t_{k})\big)^{\ell_{k,\tau}}\bigg|\nonumber
\end{align}
 with $\varepsilon\in(0,1/2)$, in view of \cref{prop:1st-order-embedding}, $t_{\ell}\mapsto w(\bst)$ 
admits a continuous representative on $(0,1)$; indeed, 
	following an argument analogous to the latter half of  
the proof of \cref{prop:lattice-affin}, the function
	$v:=[D^{\boldsymbol{\lambda}_{1}}f](\cdot;\boldsymbol{x}_{-\ell})\cdot\rho^{1/2+\varepsilon}(\cdot;\boldsymbol{x}_{-\ell})$
with $(\cdot,\boldsymbol{x}_{-\ell}):=(x_{1},\dots,x_{\ell-1},\cdot,x_{\ell+1},\dots,x_{d})$
is in $W_{\text{mix}}^{1,2}(\mathbb{R})$ for a.e.~$(x_{1},\dots,x_{\ell-1},x_{\ell+1},\dots,x_{d})\in\mathbb{R}^{d-1}$. 
	To see this, for simplicity consider the case $|\mathbf{r}|_{\infty}\leq\alpha-1$.
	Then, $\rho^{1/2+\varepsilon}D^{\boldsymbol{\lambda}_{1}}f\colon\mathbb{R}^{d}\to\mathbb{R}$
	is in $W_{\mathrm{mix}}^{1,2}(\mathbb{R}^{d})$. Indeed, for $\mathbf{k}\in\{0,1\}^{d}$,
	we have
	\begin{align*}
	D^{\mathbf{k}}\bigl(\rho^{1/2+\varepsilon}D^{\boldsymbol{\lambda}_{1}}f\bigr) & =\sum_{\mathbf{j\leq k}}\binom{\mathbf{k}}{\mathbf{j}}D^{\mathbf{j}}(\rho^{1/2+\varepsilon})D^{\mathbf{k}-\mathbf{j}+\boldsymbol{\lambda}_{1}}f\\
	& =\sum_{\mathbf{j\leq k}}\binom{\mathbf{k}}{\mathbf{j}}\Bigl(\prod_{j\in\mathbf{j}}-\bigg(\frac{1}{2}+\varepsilon\bigg)x_{j}\Bigr)\rho^{1/2+\varepsilon}D^{\mathbf{k}-\mathbf{j}+\boldsymbol{\lambda}_{1}}f,
	\end{align*}
	so
	\(
	\bigl\|D^{\mathbf{k}}\bigl(\rho^{1/2+\varepsilon}D^{\boldsymbol{\lambda}_{1}}f\bigr)\bigr\|_{L^{2}(\mathbb{R}^{d})}\leq\sum_{\mathbf{j\leq k}}\binom{\mathbf{k}}{\mathbf{j}}c_{\mathbf{j},\varepsilon}\|\rho^{1/2}D^{\mathbf{k}-\mathbf{j}+\boldsymbol{\lambda}_{1}}f\|_{L^{2}(\mathbb{R}^{d})}<\infty
	\).  
	Showing $v\in W_{\mathrm{mix}}^{1,2}(\mathbb{R})$ \rev{ a.e.~in $\mathbb{R}^{d-1}$} when we only have
	$r_{\ell}\leq\alpha-1$ is analogous. 
	  As a result, noting that $(\Psi_{\mathrm{cot}}^{-1},\dots,\Psi_{\mathrm{cot}}^{-1})\colon\mathbb{R}^{d-1}\to(0,1)^{d-1}$
maps null sets to null sets, we see that $t_{\ell}\mapsto D^{\mathbf{r}}g(\boldsymbol{t})$
admits a continuous representative $(0,1)^{d-1}$-a.e. For $|\mathbf{r}|_{\infty}\leq\alpha-1$, the function 
$\rho^{1/2+\varepsilon}\cdot D^{\boldsymbol{\lambda}_{1}}f$ is in $W_{\text{mix}}^{1,2}(\R^{d})$ and thus the analogous
claim holds everywhere in $(0,1)^{d-1}$.

For \eqref{eq:g_periodicity}, we show that the first factor in \eqref{eq:w_two_factors} involving $f$ is bounded on $\mathbb{R}^{d}$, and that 
the second factor approaches zero as $\bst$ goes to the boundary.
For the first factor, again we use the argument in the proof
of \cref{prop:lattice-affin} 
to deduce $v\in W_{\text{mix}}^{1,2}(\mathbb{R})$, and thus from
\cref{prop:1st-order-embedding} $\sup_{x_{\ell}\in\mathbb{R}}|v(x_{\ell})|<\infty$. 
This holds everywhere in $\mathbb{R}^{d-1}$ if $|\mathbf{r}|_{\infty}\leq\alpha-1$,
and a.e. in $\mathbb{R}^{d-1}$ otherwise.
 For the second factor,
we have 
\begin{align*}
 & \lim_{t_{j}\to0+,1-}\bigg|	
 	\Bigl(([D^{\boldsymbol{\lambda}_{2}}\rho]\circ\boldsymbol{\Psi}_{\mathrm{cot}})(\bst)\Bigr)
	\Bigl((\rho^{-1/2-\varepsilon}\circ\boldsymbol{\Psi}_{\mathrm{cot}})(\bst)
	\Bigr)
		\prod_{k=1}^{d}\prod_{\tau=1}^{\lambda_{3,k}}\big(D^{\tau}\phi(t_{k})\big)^{\ell_{k,\tau}}\bigg|\\
 & =\lim_{|x_{j}|\to\infty}
 	\bigg|
		\Bigl([D^{\boldsymbol{\lambda}_{2}}\rho](\bsx)\Bigr)
		\Bigl(\rho^{-1/2-\varepsilon}(\bsx)\Bigr)
		\prod_{k=1}^{d}\bigg(\prod_{\tau=1}^{\lambda_{3,k}}\big([D^{\tau}\phi]\circ\phi^{-1}(x_{k})\big)^{\ell_{k,\tau}}\bigg)\bigg|\\
 & =\lim_{|x_{j}|\to\infty}
 	\bigg|
		\Bigl(H_{\boldsymbol{\lambda}_{2}}(\bsx)\;\rho^{1/2-\varepsilon}(\bsx)\Bigr)\prod_{k=1}^{d}\bigg(\prod_{\tau=1}^{\lambda_{3,k}}\big([D^{\tau}\phi]\circ\phi^{-1}(x_{k})\big)^{\ell_{k,\tau}}\bigg)\bigg|=0,
\end{align*}
where in the last line, we again used the fact that $([D^{\tau}\phi]\circ\phi^{-1}(x_{k})\big)^{\ell_{k,\tau}}$
 is a polynomial of degree $(\tau+1)\ell_{k,\tau}$; see \cref{lem:cot-polynomial}.
Thus we have proved \eqref{eq:g_periodicity}. 
\end{proof}

	Using \cref{prop:g-zero-boudnary}, we will show that if $f \in H_{\rho}^{\alpha}$, then after a Möbius transformation it belongs to Sobolev-type spaces on the unit cube, where QMC rules yield small errors.   We use two such spaces. 
	The first is the Korobov space, given by
	\begin{align*}
    {H}_{\text{Kor}}^{\alpha}&:=\bigg\{f\in L^2([0,1]^d) \bigg| \|f\|^2_{\text{Kor},\alpha} := \sum_{\mathbf{k}\in\mathbb{Z}^{d}} |\widehat{f}(\mathbf{k})|^2 r_{\alpha}^2(\mathbf{k})<\infty,\\  & \qquad r_{\alpha}(\mathbf{k}):= \prod_{j=1}^d\max(1,|k_j|^\alpha), \;\; \widehat{f}(\mathbf{k}):=\int_{[0,1]^d} f(\boldsymbol{t})\mathrm{e}^{-2\pi\mathrm{i}\boldsymbol{t}\cdot\mathbf{k}}\mathrm{d}\boldsymbol{t}\bigg\}.
    \end{align*}
	The second is the unanchored Sobolev space, defined by \eqref{eq:def-unanchored-alpha-ab} with $\boldsymbol{a}=\boldsymbol{0}$ and $\boldsymbol{b}=\boldsymbol{1}$.

For the Korobov space, the rank-$1$ lattice rule constructed by \cite[Algorithm~3.14]{Dick.J_Kritzer_Pillichshammer_2022_Book_LatticeRules} achieves the convergence rate $N^{-\alpha}(\ln N)^{d\alpha}$. 
   For the unanchored Sobolev space, higher-order digital nets achieve the optimal rate  $N^{-\alpha}(\ln N)^{(d-1)/2}$ \cite{Goda.T_Suzuki_Yoshiki_2018_OptimalOrderQuadrature}.
	This rate is optimal,  including the logarithmic factor,  among all linear and nonlinear quadrature classes that rely solely on function evaluations.  
   QMC rules for $\mathbb{R}^d$  with Möbius transformation inherit these results: those based on rank-$1$ lattice rules achieve the optimal rate up to a logarithmic factor, whereas those based on higher-order digital nets inherit full optimality, including the logarithmic factor, as we show below.

\begin{remark}[Evaluation at the boundary]
	Lattice points 
	and digital nets on the unit cube include 
	boundary points
	such as $\boldsymbol{0}$,
	where the M\"obius transformation is undefined.
	\cref{prop:g-zero-boudnary} tells us that this is not a problem, since the integrand $g(\bst)=f(\boldsymbol{\Psi}_{\mathrm{cot}}(\bst))\rho(\boldsymbol{\Psi}_{\mathrm{cot}}(\bst))\prod_{k=1}^{d}\phi'(t_{k})$ for $f\in H_{\rho}^{\alpha}$ admits an extension by zero to the boundary. 
	Thus,  in the implementation of QMC methods using such  points, boundary values may be set to zero. 
\end{remark}

\begin{proposition}[Lattice
with Möbius transformation] Let $Q_{N}$ be the rank-$1$ lattice
rule with cotangent transform as in \eqref{eq:QMC+Moebius}. Then,
with a generating vector $\boldsymbol{z}$ obtained by the CBC construction
in \cite[Algorithm~3.14]{Dick.J_Kritzer_Pillichshammer_2022_Book_LatticeRules},
$Q_{N}$ satisfies 
\begin{equation}
\sup_{f\neq0}\frac{|I(f)-Q_{N}(f)|}{\|f\|_{H_{\rho}^{\alpha}}}\leq C_{\alpha,d}N^{-\alpha}(\ln N)^{d\alpha}.\label{eq:Moeb-lattice-error}
\end{equation}
\end{proposition}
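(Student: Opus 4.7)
The plan is to reduce the error analysis to a QMC integration problem on the unit cube $[0,1]^{d}$ for the transformed integrand
\[
g(\boldsymbol{t}):=f(\boldsymbol{\Psi}_{\mathrm{cot}}(\boldsymbol{t}))\,\rho(\boldsymbol{\Psi}_{\mathrm{cot}}(\boldsymbol{t}))\prod_{k=1}^{d}\phi'(t_{k}),
\]
and then to invoke the known worst-case error bound for CBC-constructed rank-$1$ lattice rules in the Korobov space $H_{\text{Kor}}^{\alpha}$. Since $\phi'>0$, the change-of-variables formula gives $I(f)=\int_{[0,1]^{d}}g(\boldsymbol{t})\,\mathrm{d}\boldsymbol{t}$ and $Q_{N}(f)=\frac{1}{N}\sum_{j=1}^{N}g(\boldsymbol{t}^{(j)})$, so it suffices to bound the QMC error for $g$ on the unit cube. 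Evaluation at lattice points that happen to lie on the boundary causes no issue because, by Proposition~\ref{prop:g-zero-boudnary}, $g$ extends continuously to $[0,1]^{d}$ by zero, as recorded in the preceding remark.

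The central step is to show that $f\in H_{\rho}^{\alpha}$ implies $g\in H_{\text{Kor}}^{\alpha}$ with
\[
\|g\|_{\text{Kor},\alpha}\le C_{\alpha,d}\,\|f\|_{H_{\rho}^{\alpha}}.
\]
This is precisely what Proposition~\ref{prop:g-zero-boudnary} is tailored for. The bound \eqref{eq:D_g_bound} supplies $L^{2}([0,1]^{d})$-control of every mixed derivative $D^{\mathbf{r}}g$ with $|\mathbf{r}|_{\infty}\le\alpha$, while the vanishing boundary condition \eqref{eq:g_periodicity}, which holds everywhere on the relevant face for $|\mathbf{r}|_{\infty}\le\alpha-1$, ensures that the periodic extension of $g$ to the torus $\mathbb{T}^{d}$ has continuous mixed derivatives up to order $\alpha-1$ in each coordinate and an $L^{2}$ mixed derivative of order $\alpha$. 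By Parseval's identity applied to the Fourier expansion of this periodic extension, the Korobov norm satisfies
\[
\|g\|_{\text{Kor},\alpha}^{2}\;\asymp_{\alpha,d}\;\sum_{|\mathbf{r}|_{\infty}\le\alpha}\|D^{\mathbf{r}}g\|_{L^{2}([0,1]^{d})}^{2},
\]
which, combined with Proposition~\ref{prop:g-zero-boudnary}, yields the desired norm estimate.

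Finally, for a generating vector $\boldsymbol{z}$ produced by the CBC construction of \cite[Algorithm~3.14]{Dick.J_Kritzer_Pillichshammer_2022_Book_LatticeRules}, the standard theory of lattice rules in the Korobov space supplies the worst-case error bound
\[
\sup_{h\ne 0}\frac{\bigl|\int_{[0,1]^{d}}h(\boldsymbol{t})\,\mathrm{d}\boldsymbol{t}-\frac{1}{N}\sum_{j=1}^{N}h(\boldsymbol{t}^{(j)})\bigr|}{\|h\|_{\text{Kor},\alpha}}\;\le\;C\,N^{-\alpha}(\ln N)^{d\alpha}.
\]
Applying this with $h=g$ together with the norm estimate above yields \eqref{eq:Moeb-lattice-error}.

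The main obstacle is the Korobov-norm step: one must verify carefully that the boundary vanishing of $D^{\mathbf{r}}g$ for $|\mathbf{r}|_{\infty}\le\alpha-1$ is exactly the regularity needed for the periodic extension to belong to the mixed-smoothness Sobolev space on $\mathbb{T}^{d}$ whose norm is equivalent to the Korobov norm. In particular, the everywhere-vanishing (as opposed to only almost-everywhere vanishing) part of Proposition~\ref{prop:g-zero-boudnary} is what ensures that no singular distributions appear when one computes the $\alpha$-th mixed derivative of the periodic extension. The remainder of the argument is bookkeeping of constants.
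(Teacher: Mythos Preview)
Your proposal is correct and follows essentially the same route as the paper: reduce to the unit cube, use Proposition~\ref{prop:g-zero-boudnary} to place $g$ in the Korobov space with $\|g\|_{\mathrm{Kor},\alpha}\lesssim\|f\|_{H_{\rho}^{\alpha}}$, and then invoke the CBC lattice-rule bound. The paper carries out the Korobov-norm step by a direct integration-by-parts computation on the Fourier coefficients (using only the almost-everywhere boundary vanishing, which suffices since one integrates over the remaining variables), whereas you phrase it via the periodic extension and norm equivalence; the content is the same.
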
 
\begin{proof}
For $f\in H_{\rho}^{\alpha}$, set $g(\boldsymbol{t})=f\left(\boldsymbol{\Psi}_{\mathrm{cot}}(\boldsymbol{t})\right)\rho\left(\boldsymbol{\Psi}_{\mathrm{cot}}(\boldsymbol{t})\right)\prod_{k=1}^{d}\phi'(t_{k})$
for $\boldsymbol{t}\in(0,1)^{d}$. Fix $\ell\in\{1,\dots,d\}$ and
choose $\mathbf{r}\in\{0,\ldots,\alpha\}^{d}$, $r_{\ell}\leq\alpha-1$.
Noting that $t_{\ell}\mapsto D^{\mathbf{r}}g(\boldsymbol{t})\mathrm{e}^{-2\pi\mathrm{i}\boldsymbol{t}\cdot\mathbf{k}}$
is absolutely continuous on the interval $(0,1)$ for a.e. $\boldsymbol{t}_{-\ell}\in(0,1)^{d}$,
we use integration by parts in $(0,1)$ \cite[Corollary 3.23]{Leoni.G_2017_book_Sobolev_2nd},
as well as uniformly continuity up to $[0,1]$ and the integrability
of functions under consideration on $(0,1)$, to obtain 
\begin{align*}
\int_{0}^{1}D^{\mathbf{r}+\mathbf{e}_{\ell}}g(\boldsymbol{t})\mathrm{e}^{-2\pi\mathrm{i}\boldsymbol{t}\cdot\mathbf{k}}\mathrm{d}t_{\ell} & =2\pi\mathrm{i}k_{\ell}\int_{0}^{1}D^{\mathbf{r}}g(\boldsymbol{t})\mathrm{e}^{-2\pi\mathrm{i}\boldsymbol{t}\cdot\mathbf{k}}\mathrm{d}t_{\ell},
\end{align*}
where we used \cref{prop:g-zero-boudnary} to handle the
boundary terms. Here, $\mathbf{e}_{\ell}$ denotes the $\ell$-th
standard basis in $\mathbb{R}^{d}$. Using \cref{prop:g-zero-boudnary}
once more, we see
\begin{align*}
\|g\|_{\mathrm{Kor},\alpha}^{2} & \leq\sum_{\mathbf{k}\in\mathbb{Z}^{d}}\Bigl(\prod_{j=1}^{d}\sum_{r=0}^{\alpha}(2\pi k_{j})^{2r}\Bigr)\big|\widehat{g}(\mathbf{k})\big|^{2}=\sum_{\mathbf{k}\in\mathbb{Z}^{d}}\sum_{|\mathbf{r}|_{\infty}\leq\alpha}\Bigl(\prod_{j=1}^{d}(2\pi k_{j})^{2r_{j}}\Bigr)\big|\widehat{g}(\mathbf{k})\big|^{2}\\
 & =\sum_{|\mathbf{r}|_{\infty}\leq\alpha}\|D^{\mathbf{r}}g\|_{L^{2}((0,1)^{d})}^{2}\leq C\|f\|_{H_{\rho}^{\alpha}}^{2}<\infty.
\end{align*}
Finally, since $Q_{N}$ satisfies $|I(f)-Q_{N}(f)|\leq C\|g\|_{\mathrm{Kor},\alpha}N^{-\alpha}(\ln N)^{d\alpha}$,
the statement follows. 
\end{proof}
\begin{proposition}[Digital net with Möbius transformation]
Let $Q_{N}$ be the QMC with digital net with cotangent transform
as in \eqref{eq:QMC+Moebius}, where points $(\boldsymbol{t}^{(j)})_{j=1,\dots,N}$
are given by a higher-order net of order $(2\alpha+1)$ with $N=p^{m}$
elements. Then, $Q_{N}$ satisfies 
\begin{equation}
\sup_{f\neq0}\frac{|I(f)-Q_{N}(f)|}{\|f\|_{H_{\rho}^{\alpha}}}\leq C_{\alpha,d}N^{-\alpha}(\ln N)^{(d-1)/2}.\label{eq:Moeb-net-error}
\end{equation}
\end{proposition}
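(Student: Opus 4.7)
The plan is to reduce the problem to integration in the unanchored Sobolev space on $[0,1]^{d}$ via the cotangent transform, then invoke the known optimal bound for higher-order digital nets of order $2\alpha+1$ established in \cite{Goda.T_Suzuki_Yoshiki_2018_OptimalOrderQuadrature}. Concretely, given $f \in H_{\rho}^{\alpha}$, define as before
\[
g(\boldsymbol{t}) := f(\boldsymbol{\Psi}_{\mathrm{cot}}(\boldsymbol{t}))\,\rho(\boldsymbol{\Psi}_{\mathrm{cot}}(\boldsymbol{t}))\prod_{k=1}^{d}\phi'(t_{k}), \qquad \boldsymbol{t} \in (0,1)^{d},
\]
so that $I(f) = \int_{[0,1]^{d}} g(\boldsymbol{t})\,\mathrm{d}\boldsymbol{t}$ and $Q_{N}(f) = \frac{1}{N}\sum_{j=1}^{N} g(\boldsymbol{t}^{(j)})$ (with the boundary-value convention from the earlier remark, justified by Proposition~\ref{prop:g-zero-boudnary}). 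The target is therefore to control the worst-case integration error for $g$ on the cube using a higher-order digital net.

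The first task is to estimate $\|g\|_{\mathscr{H}^{\alpha}([\boldsymbol{0},\boldsymbol{1}])}$ by $\|f\|_{H_{\rho}^{\alpha}}$. For any $\mathbf{r} \in \{0,\dots,\alpha\}^{d}$ with $\mathfrak{v} = \{k : r_{k} = \alpha\}$, the Cauchy--Schwarz inequality applied to the inner integral over $[\boldsymbol{0}_{-\mathfrak{v}},\boldsymbol{1}_{-\mathfrak{v}}]$ (whose volume is $1$) yields
\[
\int_{[\boldsymbol{0}_{\mathfrak{v}},\boldsymbol{1}_{\mathfrak{v}}]}\!\!\left(\int_{[\boldsymbol{0}_{-\mathfrak{v}},\boldsymbol{1}_{-\mathfrak{v}}]}\!\! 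D^{\mathbf{r}}g(\boldsymbol{t})\,\mathrm{d}\boldsymbol{t}_{-\mathfrak{v}}\right)^{2}\!\mathrm{d}\boldsymbol{t}_{\mathfrak{v}} \leq \|D^{\mathbf{r}}g\|_{L^{2}([0,1]^{d})}^{2}.
\]
Summing over $\mathbf{r} \in \{0,\dots,\alpha\}^{d}$ and invoking the bound \eqref{eq:D_g_bound} from Proposition~\ref{prop:g-zero-boudnary} gives
\[
\|g\|_{\mathscr{H}^{\alpha}([\boldsymbol{0},\boldsymbol{1}])}^{2} \leq \sum_{\mathbf{r} \in \{0,\dots,\alpha\}^{d}} \|D^{\mathbf{r}}g\|_{L^{2}([0,1]^{d})}^{2} \leq C_{\alpha,d}\|f\|_{H_{\rho}^{\alpha}}^{2}.
\]

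The second task is to recognise that $g$ is a legitimate element of the unanchored Sobolev space, which requires that the mixed weak derivatives $D^{\mathbf{r}}g$ exist on $(0,1)^{d}$; this is ensured by Proposition~\ref{prop:faa-di-bruno-comp} and Proposition~\ref{prop:g-zero-boudnary}. Once $g$ is in $\mathscr{H}^{\alpha}([\boldsymbol{0},\boldsymbol{1}])$, the main result of \cite{Goda.T_Suzuki_Yoshiki_2018_OptimalOrderQuadrature} states that for higher-order digital nets of order $2\alpha+1$ with $N = p^{m}$ points, the equal-weight quadrature $\frac{1}{N}\sum_{j=1}^{N} g(\boldsymbol{t}^{(j)})$ satisfies
\[
\biggl|\int_{[0,1]^{d}}\! g(\boldsymbol{t})\,\mathrm{d}\boldsymbol{t} - \frac{1}{N}\sum_{j=1}^{N} g(\boldsymbol{t}^{(j)})\biggr| \leq C_{\alpha,d}\,\|g\|_{\mathscr{H}^{\alpha}([\boldsymbol{0},\boldsymbol{1}])}\,N^{-\alpha}(\ln N)^{(d-1)/2}.
\]
Combining this with the previously established bound on $\|g\|_{\mathscr{H}^{\alpha}([\boldsymbol{0},\boldsymbol{1}])}$ yields \eqref{eq:Moeb-net-error} and completes the proof.

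The only substantive step is the norm embedding $H_{\rho}^{\alpha} \hookrightarrow \mathscr{H}^{\alpha}([\boldsymbol{0},\boldsymbol{1}])$ via the Möbius transformation; everything else is a direct citation. Conceptually the hard part has already been done in Proposition~\ref{prop:g-zero-boudnary}, which absorbs the non-trivial interaction between the unbounded derivatives of $\phi$, the Gaussian weight, and the Faà di Bruno expansion; after that, the unanchored Sobolev norm is controlled by the plain mixed-derivative $L^{2}$ norms without any boundary-term subtleties, since the Cauchy--Schwarz step above does not require vanishing traces.
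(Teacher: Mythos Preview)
Your proof is correct and follows essentially the same approach as the paper: both invoke \cite[Theorem 1]{Goda.T_Suzuki_Yoshiki_2018_OptimalOrderQuadrature} for the digital-net bound in $\mathscr{H}^{\alpha}([\boldsymbol{0},\boldsymbol{1}])$, bound the unanchored norm by $\sum_{\mathbf{r}}\|D^{\mathbf{r}}g\|_{L^{2}}^{2}$ via Cauchy--Schwarz, and then apply Proposition~\ref{prop:g-zero-boudnary}. Your write-up is simply a bit more explicit about the intermediate Cauchy--Schwarz step and the membership of $g$ in the space.
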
 
\begin{proof}
From \cite[Theorem 1]{Goda.T_Suzuki_Yoshiki_2018_OptimalOrderQuadrature},
the QMC under study $Q_{N}$ satisfies 
$|I(f)-Q_{N}(f)|\leq C_{d,\alpha}\|g\|_{\mathscr{H}^{\alpha}([\boldsymbol{0},\boldsymbol{1}])}N^{-\alpha}(\ln N)^{(d-1)/2}$,
where $g(\boldsymbol{t})=f\left(\boldsymbol{\Psi}_{\mathrm{cot}}(\boldsymbol{t})\right)\rho\left(\boldsymbol{\Psi}_{\mathrm{cot}}(\boldsymbol{t})\right)\prod_{k=1}^{d}\phi'(t_{k})$,
and $\|g\|_{\mathscr{H}^{\alpha}([\boldsymbol{0},\boldsymbol{1}])}$
is defined in \eqref{eq:def-unanchored-alpha-ab}. From \cref{prop:g-zero-boudnary},
this norm can be bounded in terms of $\|f\|_{H_{\rho}^{\alpha}}$
as 
\[
\|g\|_{\mathscr{H}^{\alpha}([\boldsymbol{0},\boldsymbol{1}])}^{2}\leq\sum_{\mathbf{r}\in\{0,\dots,\alpha\}^{d}}\|D^{\mathbf{r}}g\|_{L^{2}((0,1)^{d})}^{2}\leq C_{d,\alpha}\|f\|_{H_{\rho}^{\alpha}}^{\rev{2}}.
\]
This completes the proof. 
\end{proof}
\section{Concluding remarks}\label{sec:conclusion}
In this paper, we showed that quasi-Monte Carlo methods with a change of variables achieve optimal worst-case convergence rate in Gaussian Sobolev spaces, whereas sparse-grid quadrature based on the one-dimensional Gauss–Hermite rule is suboptimal in this sense.

These findings motivate further studies in several directions. 
One is to analyse sparse-grid algorithms for functions over $\mathbb{R}^d$ built from univariate rules other than the Gauss--Hermite rule.
A notable alternative is the sparse-grid method using weighted Leja sequences \cite{Ernst.O.G_Sprungk_Tamellini_2021_ExpansionsNodesSparse,Narayan.A_Jakeman_2014_AdaptiveLejaSparse}, which was proposed by Narayan and Jakeman \cite{Narayan.A_Jakeman_2014_AdaptiveLejaSparse}. 
To obtain results analogous to what we showed in this paper, it is worth noting that the fooling function we constructed in \cref{thm:GH-lower-bound-general,thm:lower-isotropic-Smolyak} depend solely on the distribution of quadrature nodes, and not on the quadrature weights.  Hence, understanding the distribution of Leja points for the weight of interest would allow the construction of a corresponding fooling function.

We did not discuss dimension independence of the constant. The implied constant in our error estimate may depend on the dimension $d$, which may be large in practice. 
Addressing this is left for future work. 
A standard approach is to employ the so-called weighted spaces, as  introduced by Hickernell \cite{Hickernell.F.J_1996_RKHS}, under which QMC methods can attain dimension-independent convergence rates with constants that are likewise independent of dimension, as shown by Sloan and Woźniakowski \cite{Sloan.I.H_Wozniakowski_1998_iff}. 
 These ideas have been widely applied in the analysis of parametric partial differential equations \cite{Graham.I_etal_2015_Numerische, Herrmann.L_Schwab_2019_local_lognormal, KaarniojaEtAl.V_2022_FastApproximationPeriodic, Kazashi.Y_2019_product, Robbe.P_etal_2017_MultiIndexQuasiMonteCarlo}.
In fact, the results in \cite{Sloan.I.H_Wozniakowski_1998_iff} show that such weights are even necessary in the setting considered there, suggesting that additional structure is required to make the constants in our bounds dimension-independent.

This paper focused on Sobolev functions. Another important regime in high dimension is that of analytic functions \cite{Chkifa.A_etal_2014_HighDimensionalAdaptiveSparse,
Irrgeher.C_etal_2015_HermiteAnalytic,
Dung.D_etal_2023_AnalyticitySparsityUncertainty}, 
	where the question of optimal rates in this setting remains largely open. 
	 In establishing the lower bound for the sparse-grid Gauss–Hermite rule in this paper, a crucial component was the fooling function constructed for univariate Sobolev functions \cite{Kazashi.Y_Suzuki_Goda_2023_SuboptimalityGaussHermite}. 
	A similar technique for analytic functions is available \cite{Goda.T_Kazashi_Tanaka_2024_HowSharpAreError,MR1427714}, which may prove useful for further analysis in this regime.

\emergencystretch=1em
\printbibliography[heading=siamrefheading]

\end{document}